\definecolor{darkred}{rgb}{1,0,0} 
\definecolor{darkgreen}{rgb}{0,0.8,0}
\definecolor{darkblue}{rgb}{0,0,1}
\def\reflb#1#2{\begingroup
    #2%
    \def\@currentlabel{#2}%
    \phantomsection\label{#1}\endgroup
}
\numberwithin{equation}{section}
\newtheorem {Theorem}{Theorem}
\numberwithin{Theorem}{section}
\newtheorem {Lemma}[Theorem]    {Lemma}
\newtheorem {Proposition}[Theorem]{Proposition}
\theoremstyle{definition}
\theoremstyle{remark}
\newtheorem{Remark}[Theorem]{Remark}
\newtheorem{Example}[Theorem]{Example}
\def    \eps    {\epsilon}
\newcommand{\CA}{{\mathcal A}}
\newcommand{\CS}{{\mathcal S}}
\newcommand{\supp}{\operatorname{supp}}
\newcommand{\id}{{\mathit id}}
\newcommand{\const}{{\mathit const}}
\newcommand{\fc}{{\mathfrak c}}
\newcommand{\ff}{{\mathfrak f}}
\newcommand{\tPhi}{\tilde{\Phi}}
\newcommand{\A}{{\mathcal A}}
\def    \nat    {{\natural}}
\def    \R      {{\mathbb R}}
\def    \Z      {{\mathbb Z}}
\def    \N      {{\mathbb N}}
\def    \Q      {{\mathbb Q}}
\def    \T      {{\mathbb T}}
\def    \CP     {{\mathbb C}{\mathbb P}}
\def    \RP     {{\mathbb R}{\mathbb P}}
\def    \12    {{\frac{1}{2}}}
\def    \p      {\partial}
\def    \rk     {\operatorname{rk}}
\def    \Sp     {\operatorname{Sp}}
\def    \U     {\operatorname{U}}
\def    \HF     {\operatorname{HF}}
\def    \HC     {\operatorname{HC}}
\def    \Gr     {\operatorname{Gr}}
\def    \H     {\operatorname{H}}
\def    \Tor     {\operatorname{Tor}}
\def    \HM     {\operatorname{HM}}
\def    \bx     {\bar{x}}
\def    \MUCZ  {\operatorname{\mu_{\scriptscriptstyle{CZ}}}}
\begin{document}


\setlength{\smallskipamount}{6pt}
\setlength{\medskipamount}{10pt}
\setlength{\bigskipamount}{16pt}





\title[Conley Conjecture and Beyond]{The Conley Conjecture and
  Beyond}

\author[Viktor Ginzburg]{Viktor L. Ginzburg}
\author[Ba\c sak G\"urel]{Ba\c sak Z. G\"urel}

\address{BG: Department of Mathematics, University of Central Florida,
  Orlando, FL 32816, USA} \email{basak.gurel@ucf.edu}

\address{VG: Department of Mathematics, UC Santa Cruz, Santa Cruz, CA
  95064, USA} \email{ginzburg@ucsc.edu}

\subjclass[2010]{53D40, 37J10, 37J45, 37J55} 

\keywords{Periodic orbits, Hamiltonian diffeomorphisms and Reeb flows,
  Conley conjecture, Floer and contact homology, twisted geodesic or
  magnetic flows}

\date{\today} 

\thanks{The work is partially supported by NSF grants DMS-1414685 (BG)
  and DMS-1308501 (VG)}

\bigskip

\begin{abstract}
  
  This is (mainly) a survey of recent results on the problem of the
  existence of infinitely many periodic orbits for Hamiltonian
  diffeomorphisms and Reeb flows. We focus on the Conley conjecture,
  proved for a broad class of closed symplectic manifolds, asserting
  that under some natural conditions on the manifold every Hamiltonian
  diffeomorphism has infinitely many (simple) periodic orbits. We
  discuss in detail the established cases of the conjecture and
  related results including an analog of the conjecture for Reeb
  flows, the cases where the conjecture is known to fail, the question
  of the generic existence of infinitely many periodic orbits, and
  local geometrical conditions that force the existence of infinitely
  many periodic orbits. We also show how a recently established
  variant of the Conley conjecture for Reeb flows can be applied to
  prove the existence of infinitely many periodic orbits of a
  low-energy charge in a non-vanishing magnetic field on a surface
  other than a sphere.

\end{abstract}

\maketitle

\tableofcontents

\section{Introduction}
\label{sec:intro}
Hamiltonian systems tend to have infinitely many periodic orbits. For
many phase spaces, every system, without any restrictions, has
infinitely many simple periodic orbits.  Moreover, even if not holding
unconditionally, this is still a $C^\infty$-generic property of
Hamiltonian systems for the majority of phase spaces.  Finally, for
some phase spaces, a system has infinitely many simple periodic orbits
when certain natural local conditions are met.

This paper is mainly a survey focusing on this phenomenon for
Hamiltonian diffeomorphisms and Reeb flows. The central theme of the
paper is the so-called Conley conjecture, proved for a broad class of
closed symplectic manifolds and asserting that under some natural
conditions on the manifold every Hamiltonian diffeomorphism has
infinitely many (simple) periodic orbits. We discuss in detail the
established cases of the conjecture and related results, including an
analog of the conjecture for Reeb flows, and also the manifolds for
which the conjecture is known to fail. In particular, we investigate
local geometrical conditions that force globally the existence of
infinitely many periodic orbits and consider the question of the
generic existence of infinitely many periodic orbits.

We also briefly touch upon the applications to dynamical systems of
physical origin.  For instance, we show how a recently established
variant of the Conley conjecture for Reeb flows can be used to prove
the existence of infinitely many simple periodic orbits of a
low-energy charge in a non-vanishing magnetic field on a surface other
than a sphere.

Our perspective on the problem and the methods we use here are mainly
Morse theoretic, broadly understood, and homological.  In this
framework, the reasons for the existence of periodic orbits lie at the
interplay between local dynamical features of the system and the
global behavior of the homology ``counting'' the periodic orbits,
e.g., Floer, symplectic or contact homology.

This is not the only perspective on the subject. For instance, in
dimensions two and three, one can alternatively use exceptionally
powerful methods of low-dimensional dynamics (see, e.g., \cite{Fr1,
  Fr2, FH, LeC}) and holomorphic curves (see, e.g., \cite{BH,
  HWZ:convex, HWZ03}). In higher dimensions, however, most of the
results on this class of problems rely on homological methods.

It is important to note that for Hamiltonian diffeomorphisms, in
contrast with the classical setting of geodesic flows on a majority of
manifolds as in \cite{GM}, the existence of infinitely many simple
periodic orbits is not forced by the homological growth. Likewise, the
local dynamical features we consider here are usually of different
flavor from, say, homoclinic intersections or elliptic fixed points
often used in dynamics to infer under favorable circumstances the
existence of infinitely many periodic orbits. There is no known single
unifying explanation for the ubiquity of Hamiltonian systems with
infinitely many periodic orbits. Even in the cases where the Conley
conjecture holds, this is usually a result of several disparate
phenomena.

The survey can be read at three levels. First of all, we give a broad
picture, explain the main ideas, results, and conjectures in a
non-technical way, paying attention not only to what has been proved
but also to what is not known. This side of the survey requires very
little background in symplectic and contact topology and dynamics from
the reader. However, we also give the necessary technical details and
conditions when stating the most important results. Although we recall
the relevant definitions in due course, this level of the survey is
intended for a more expert reader. Finally, in several instances, we
attempt to explain the main ideas of the proofs or even to sketch the
arguments. In particular, in Section \ref{sec:pf} we outline the proof
of the Conley conjecture; here we freely use Floer homology and some
other, not entirely standard, symplectic topological tools.

The survey is organized as follows. In Section \ref{sec:CC}, we
discuss the Conley conjecture (its history, background, and the state
of the art) and the generic existence results, and also set the
conventions and notation used throughout the paper. A detailed outline
of the proof is, as has been mentioned above, given in Section
\ref{sec:pf}. The rest of the paper is virtually independent of this
section. We discuss the Conley conjecture and other related phenomena
for Reeb flows in Section \ref{sec:reeb} and applications of the
contact Conley conjecture to twisted geodesic flows, which govern the
motion of a charge in a magnetic field, in Section \ref{sec:magnetic}.
Finally, in Section \ref{sec:beyond}, we turn to the manifolds for
which the Conley conjecture fails and, taking the celebrated Frank's
theorem (see \cite{Fr1, Fr2}) as a starting point, show how certain
local geometrical features of a system can force the existence of
infinitely many periodic orbits. Here we also briefly touch upon the
problem of the existence of infinitely many simple periodic orbits for
symplectomorphisms and for some other types of ``Hamiltonian''systems.


\section{Conley conjecture}
\label{sec:CC}
\subsection{History and background}
\label{sec:CC-history}
As has been pointed out in the introduction, for many closed
symplectic manifolds, every Hamiltonian diffeomorphism has infinitely
many simple periodic orbits and, in fact, simple periodic orbits of
arbitrarily large period whenever the fixed points are isolated. This
unconditional existence of infinitely many periodic orbits is often
referred to as the Conley conjecture. The conjecture was, indeed,
formulated by Charles Conley in 1984 for tori, \cite{Co}, and since
then it has been a subject of active research focusing on establishing
the existence of infinitely many periodic orbits for broader and
broader classes of symplectic manifolds or Hamiltonian
diffeomorphisms.

The Conley conjecture was proved for the so-called weakly
non-degenerate Hamiltonian diffeomorphisms in \cite{SZ} (see also
\cite{CZ:conley}) and for all Hamiltonian diffeomorphisms of surfaces
other than $S^2$ in \cite{FH} (see also \cite{LeC}). In its original
form for the tori, the conjecture was established in \cite{Hi} (see
also \cite {Ma:SDM}), and the case of an arbitrary closed,
symplectically aspherical manifold was settled in \cite{Gi:CC}. The
proof was extended to rational, closed symplectic manifolds $M$ with
$c_1(TM)|_{\pi_2(M)}=0$ in \cite{GG:gaps}, and the rationality
requirement was then eliminated in \cite{He:irr}. In fact, after
\cite{SZ}, the main difficulty in establishing the Conley conjecture
for more and more general manifolds with aspherical first
Chern class, overcome in this series of works, lied in proving the
conjecture for totally degenerate Hamiltonian diffeomorphisms not
covered by \cite{SZ}.  (The internal logic in \cite{FH,LeC}, relying
on methods from low-dimensional dynamics, was somewhat different.)
Finally, in \cite{GG:nm} and \cite{CGG}, the Conley conjecture was
proved for negative monotone symplectic manifolds. (The main new
difficulty here was in the non-degenerate case.)

Two other variants of the Hamiltonian Conley conjecture have also been
investigated. One of them is the existence of infinitely many periodic
orbits for Hamiltonian diffeomorphisms with displaceable support; see,
e.g., \cite{FS, Gu, HZ, Schwarz, Vi:gen}. Here the form $\omega$ is
usually required to be aspherical, but the manifold $M$ is not
necessarily closed.  The second one is the Lagrangian Conley
conjecture or, more generally, the Conley conjecture for Hamiltonians
with controlled behavior at infinity on cotangent bundles (see
\cite{He:cot, Lo:tori, Lu, Ma}) or even some twisted cotangent bundles
(see \cite{FMP}). In this survey, however, we focus mainly on the case
of closed manifolds.

The Conley conjecture looks deceptively similar to the well-known
conjecture that every closed simply connected Riemannian manifold
(e.g., $S^n$) carries infinitely many non-trivial closed
geodesics. However, this appears to be a very different phenomenon
than the Conley conjecture, for the latter does not distinguish
trivial and non-trivial orbits. For instance, the proof of the
Lagrangian Conley conjecture for the pure kinetic energy Hamiltonian
simply detects the constant geodesics.  We will further discuss the
connection between the two conjectures in Sections \ref{sec:reeb}
and~\ref{sec:beyond}.

What makes the Conley conjecture difficult and even counterintuitive
from a homological perspective is that there seems to be no obvious
homological reason for the existence of infinitely many simple
periodic orbits. As we have already mentioned, there is no homological
growth: the Floer homology of a Hamiltonian diffeomorphism does not
change under iterations and remains isomorphic, up to a Novikov ring,
to the homology of the manifold. (In that sense, the difficulty
\emph{is} similar to that in proving the existence of infinitely many
non-trivial closed geodesics on, say, $S^n$ where the rank of the
homology of the free loop space remains bounded as a function of the
degree.)
 
Ultimately, one can expect the Conley conjecture to hold for the
majority of closed symplectic manifolds.  There are, however, notable
exceptions. The simplest one is $S^2$: an irrational rotation of $S^2$
about the $z$-axis has only two periodic orbits, which are also the
fixed points; these are the poles. In fact, any manifold that admits a
Hamiltonian torus action with isolated fixed points also admits a
Hamiltonian diffeomorphism with finitely many periodic orbits. For
instance, such a diffeomorphism is generated by a generic element of
the torus. In particular, flag manifolds (hence the complex projective
spaces and the Grassmannians), and, more generally, most of the
coadjoint orbits of compact Lie groups as well as symplectic toric
manifolds all admit Hamiltonian diffeomorphisms with finitely many
periodic orbits. In dimension two, there are also such examples with
interesting dynamics. Namely, there exist area preserving
diffeomorphisms of $S^2$ with exactly three ergodic
measures: two fixed points and the area form; see \cite{AK} and, e.g.,
\cite{FK}. These are the so-called pseudo-rotations. By taking direct
products of pseudo-rotations, one obtains Hamiltonian diffeomorphisms
of the products of several copies of $S^2$ with finite number of
ergodic measures, and hence with finitely many periodic orbits. It
would be extremely interesting to construct a Hamiltonian analog of
pseudo-rotations for, say, $\CP^2$.

In all known examples of Hamiltonian diffeomorphisms with finitely
many periodic orbits, all periodic orbits are fixed points, i.e., no
new orbits are created by passing to the iterated diffeomorphisms,
cf.\ Section \ref{sec:beyond}. Furthermore, all such Hamiltonian
diffeomorphisms are non-degenerate, and the number of fixed points is
exactly equal to the sum of Betti numbers.  Note also that Hamiltonian
diffeomorphisms with finitely many periodic orbits are extremely
non-generic; see \cite{GG:generic} and Section~\ref{sec:CC-results}.

In any event, the class of manifolds admitting ``counterexamples'' to
the Conley conjecture appears to be very narrow, which leads one to
the question of finding further sufficient conditions for the Conley
conjecture to hold. There are several hypothetical candidates.  One of
them, conjectured by the second author of this paper, is that the
minimal Chern number $N$ of $M$ is sufficiently large, e.g., $N>\dim
M$. (The condition $c_1(TM)|_{\pi_2(M)}=0$ corresponds to $N=\infty$.)
More generally, it might be sufficient to require the Gromov--Witten
invariants of $M$ to vanish, as suggested by Michael Chance and Dusa
McDuff, or even the quantum product to be undeformed. No results in
these directions have been proved to date. Note also that for all
known ``counterexamples'' to the Conley conjecture $\H_*(M;\Z)$ is
concentrated in even degrees.

Another feature of Hamiltonian diffeomorphisms with finitely many
periodic orbits is that, for many classes of manifolds, the actions or
the actions and the mean indices of their simple periodic orbits must
satisfy certain resonance relations of Floer homological nature; see
\cite{CGG, GG:gaps, GK09, Ke:JMD}.  (There are also analogs of such
resonance relations for Reeb flows, which we will briefly touch upon
in Section \ref{sec:reeb}.)  Although the very existence of
homological resonance relations in the Hamiltonian setting is an
interesting, new and unexpected phenomenon, and some of the results
considered here do make use of these relations, their discussion is
outside the scope of this paper.

\subsection{Results: the state of the art} 
\label{sec:CC-results}
In this section, we briefly introduce our basic conventions and
notation and then state the most up-to-date results on the Conley
conjecture and generic existence of infinitely many periodic orbits
for Hamiltonian diffeomorphisms.

\subsubsection{Conventions and notation} 
\label{sec:conventions} 
Let us first recall the relevant terminology, some of which have
already been used in the previous section. A closed symplectic
manifold $(M^{2n},\omega)$ is said to be \emph{monotone (negative
  monotone)} if $[\omega]|_{\pi_2(M)}=\lambda c_1(TM)|_{\pi_2(M)}$ for
some non-negative (respectively, negative) constant $\lambda$ and
\emph{rational} if $\left<[\omega], {\pi_2(M)}\right>=\lambda_0\Z$,
i.e., the integrals of $\omega$ over spheres in $M$ form a discrete
subgroup of $\R$.  The positive generator $N$ of the discrete subgroup
$\left<c_1(TM),\pi_2(M)\right>\subset \R$ is called the \emph{minimal
  Chern number} of $M$.  When this subgroup is zero, we set
$N=\infty$.  A manifold $M$ is called \emph{symplectic CY
  (Calabi--Yau)} if $c_1(M)|_{\pi_2(M)} =0$ and \emph{symplectically
  aspherical} if $c_1(TM)|_{\pi_2(M)} =0=[\omega]|_{\pi_2(M)}$. A
symplectically aspherical manifold is monotone, and a monotone or
negative monotone manifold is rational.

All Hamiltonians $H$ considered in this paper are assumed to be
$k$-periodic in time (i.e., $H$ is a function $S^1_k\times M\to\R$,
where $S^1_k=\R/k\Z$) and the period $k$ is always a positive integer.
When the period $k$ is not specified, it is equal to one, and
$S^1=\R/\Z$. We set $H_t = H(t,\cdot)$ for $t\in S^1_k$. The
(time-dependent) Hamiltonian vector field $X_H$ of $H$ is defined by
$i_{X_H}\omega=-dH$.  A \emph{Hamiltonian diffeomorphism} is the
time-one map, denoted by $\varphi_H$ or just $\varphi$, of the
time-dependent Hamiltonian flow (i.e., Hamiltonian isotopy)
$\varphi_H^t$ generated by $X_H$. It is preferable throughout this
section to view $\varphi$ as an element, determined by $\varphi_H^t$,
of the universal covering of the group of Hamiltonian diffeomorphisms.
A one-periodic Hamiltonian $H$ can also be treated as $k$-periodic. In
this case, we will use the notation $H^{\nat k}$ and, abusing
terminology, call $H^{\nat k}$ the $k$th iteration of $H$.

In what follows, we identify the periodic orbits of $H$ (i.e., of
$\varphi_H^t$) with integer period $k$ and periodic orbits of
$\varphi$.  A periodic orbit $x$ of $H$ is \emph{non-degenerate} if
the linearized return map $d\varphi|_{x} \colon T_{x(0)}M\to
T_{x(0)}M$ has no eigenvalues equal to one. Following \cite{SZ}, we
call $x$ \emph{weakly non-degenerate} if at least one of the
eigenvalues is different from one and \emph{totally degenerate}
otherwise.  Finally, a periodic orbit is said to be \emph{strongly
  non-degenerate} if no roots of unity are among the eigenvalues of
$d\varphi|_{x}$.  This terminology carries over to Hamiltonians $H$
and Hamiltonian diffeomorphisms $\varphi$.  For instance, $\varphi$ is
non-degenerate if all its one-periodic orbits are non-degenerate and
strongly non-degenerate if all iterations $\varphi^k$ are
non-degenerate, etc.

\subsubsection{Results}
The following theorem is the most general variant of the Conley
conjecture proved to date.

\begin{Theorem}[Conley Conjecture]
\label{thm:CC}
Assume that $M$ is a closed symplectic manifold satisfying one of the
following conditions:
\begin{itemize}
\item[(CY)] $c_1(TM)|_{\pi_2(M)}=0$,
\item[(NM)] $M$ is negative monotone. 
\end{itemize}
Then every Hamiltonian diffeomorphism $\varphi$ of $M$ with finitely
many fixed points has simple periodic orbits of arbitrarily large
period.
\end{Theorem}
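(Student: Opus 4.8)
The plan is to run the now-standard Floer-homological argument for the Conley conjecture, splitting into the non-degenerate and degenerate cases, and treating (CY) and (NM) in parallel where possible.

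First I would dispose of the \emph{weakly non-degenerate} case by invoking the Salamon--Zehnder machinery \cite{SZ}: if $\varphi$ has a weakly non-degenerate periodic orbit and only finitely many fixed points, one controls the Conley--Zehnder index growth of its iterates and produces new orbits of arbitrarily large period. The remaining, genuinely hard, situation is when every fixed point is \emph{totally degenerate}. Here the key local object is the local Floer homology $\HFL(\varphi,x)$ of an isolated fixed point $x$, which is a finite-dimensional graded vector space supported, for a totally degenerate orbit, in a single degree equal to the mean index $\Delta(x)$ (an even integer in this case). The crucial input is the persistence theorem: if $x$ is an isolated fixed point of $\varphi$ with $\HFL_*(\varphi,x)\neq 0$ concentrated in degree $\Delta(x)$, then for the iterate $\varphi^k$ the orbit $x^k$ remains isolated with the same nonzero local homology, now sitting in degree $k\Delta(x)$. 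This is what forces the iterates to contribute in sliding degrees.

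The global step then proceeds by contradiction. Suppose $\varphi$ has only finitely many periodic orbits, all of them fixed points, all totally degenerate. Pick the fixed point $x$ whose local Floer homology is nonzero in the appropriate degree (existence of such an $x$ follows from the fact that the total local homology over all fixed points computes $\H_*(M)$, which is nonzero). Passing to iterates $\varphi^k$, the orbit $x^k$ carries nonzero local Floer homology in degree $k\Delta(x)$. If $\Delta(x)\neq 0$ this degree runs off to $\pm\infty$, but the filtered/global Floer homology of $\varphi^k$ is, up to a Novikov shift, just $\H_*(M)$ and hence supported in the bounded range $[0,2n]$ of degrees — for the (CY) case this is literally a grading by $\Z$; in the (NM) case one works with the $\Z/2N$-grading or, better, with the filtration by action and the argument of \cite{GG:nm, CGG} controlling the action window — contradiction. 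If instead $\Delta(x)=0$, one uses the fact that a totally degenerate orbit with zero mean index and nonzero local homology cannot be an isolated-and-only periodic orbit: the ``crossing energy'' lower bound of \cite{GG:gaps, Gi:CC} shows that there must exist Floer connecting trajectories to \emph{other} periodic orbits, and iterating produces orbits of unbounded period.

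The main obstacle is the last step in the $\Delta(x)=0$ regime — this is precisely the totally degenerate case that resisted proof after \cite{SZ} — where one needs the crossing-energy estimate: there is $\hbar>0$ (independent of $k$) such that any Floer cylinder for $\varphi^k$ that ``crosses'' the support of $x^k$ has energy at least $\hbar$, so these cylinders cannot degenerate as $k\to\infty$, forcing new orbits in a fixed action window and hence, by a pigeonhole on the (finitely many, by assumption) orbits, a contradiction with their actions being distinct. Assembling this estimate uniformly over all iterates, and in the (NM) case additionally dealing with the non-degenerate sub-case via the separate argument of \cite{CGG}, is where the real work lies; the rest is bookkeeping with gradings and the Künneth/iteration properties of local Floer homology.
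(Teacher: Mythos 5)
Your overall architecture — reduce to a non-degenerate case handled by Salamon--Zehnder-type index counting and a degenerate case handled via local Floer homology and a squeezing/crossing-energy argument — is the paper's architecture, and you correctly identify where the hard work sits. But there are several genuine gaps in the execution.

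First, your dichotomy is wrong. You split into ``all fixed points weakly non-degenerate'' (handled by \cite{SZ}) versus ``all fixed points totally degenerate,'' and the mixed case — some fixed points weakly non-degenerate, others totally degenerate but not SDM — falls through the cracks. The correct dichotomy is \emph{no fixed point is an SDM} (Proposition \ref{prop:CC-nd}) versus \emph{some fixed point is an SDM} (Theorem \ref{thm:sdm-conley}), and the first alternative is not literally the SZ setting: it is a strictly larger case and needs the local Floer homology trichotomy $\Delta(x)\neq 0$, or $\Delta(x)=0$ with $\HF_n(x)=0$, or SDM, applied fixed point by fixed point.

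Second, it is not true that the local Floer homology of a totally degenerate orbit is concentrated in the single degree $\Delta(x)$; the correct statement (see \eqref{eq:supp}) is only that $\supp \HF_*(\bx)\subset[\Delta(\bx)-n,\Delta(\bx)+n]$, an interval of width $2n$. Relatedly, your global argument is run in the wrong direction: you pick one $x$ with nonzero local homology ``in the appropriate degree'' and say its iterates contribute in degree $k\Delta(x)\to\pm\infty$, ``contradiction.'' But a single fixed point's local homology escaping the range $[0,2n]$ is not a contradiction — it just means that point stops contributing; its contribution could be carried by a different fixed point at different $k$, or cancel. The correct argument is that $\HF_n(\varphi^k)\neq 0$ for all $k$, so for each $k$ \emph{some} $k$-periodic orbit must have local homology hitting degree $n$, and one then rules out all fixed-point iterates by showing $\HF_n(x^k)=0$ for large admissible $k$ whenever $x$ is not an SDM (using the support estimate when $\Delta(x)\neq 0$ and the persistence theorem when $\Delta(x)=0$). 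The choice of a large \emph{prime} $k$, which you omit, is what guarantees both admissibility and that every $k$-periodic orbit is either simple or a $k$-th iterate of a fixed point.

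Third, your description of the SDM step is too vague to check, and it misses the point that carries the argument: the squeezing between $H_\pm$ produces nonzero filtered Floer homology in degree $n+1$ — one above the SDM's own degree — in a tiny action window $(kc+\delta_k,kc+\varepsilon)$. It is precisely this degree shift that forces a \emph{new} orbit, since iterates of fixed points cannot contribute in degree $n+1$ for large primes. ``Crossing energy'' does enter, but only as the technical ingredient enabling the action-window localization of the filtered Floer complex to the Darboux ball around the SDM; it is not itself the mechanism that produces new orbits, and the phrase ``Floer connecting trajectories to other periodic orbits'' mischaracterizes it. Finally, for the (NM) non-degenerate case, the paper does not work with the $\Z/2N$-grading: the argument is of a genuinely different flavor and relies on sub-additivity of spectral invariants (\cite{CGG,GG:nm}), while the SDM case for (NM) goes through exactly as in (CY).
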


As an immediate consequence, every Hamiltonian diffeomorphism of $M$,
whether or not the fixed-point set is finite, satisfying either (CY)
or (NM) has infinitely many simple periodic orbits. In fact, when the
fixed points of $\varphi$ are isolated, one can be even more specific:
if (CY) holds, every sufficiently large prime $p$ occurs as the period
of a simple orbit and, moreover, one can show that there exists a
sequence of integers $l_i\to \infty$ such that all $p^{l_i}$ are
periods of simple orbits.  Consequently, the number of integers less
than $k$ that occur as periods of simple periodic orbits grows at
least as fast as (in fact, faster than) $k/\ln k-C$ for some constant
$C$. This growth lower bound is typical for the Conley conjecture type
results; see also \cite{GGM} and Section \ref{sec:reeb} for the case
of Reeb flows, and \cite{Hi:geod} for the growth of closed geodesics
on $S^2$. (In dimension two, however, stronger growth results have
been established in some cases; see, e.g., \cite{LeC} and \cite[Prop.\
4.13]{Vi:gen} and also \cite{BH, FH, Ke:JMD}.)  When $M$ is negative
monotone, it is only known that there is a sequence of arbitrarily
large primes occurring as simple periods at least when, in addition,
$\varphi$ is assumed to be weakly non-degenerate; see Section
\ref{sec:indices} for the definition.

The (CY) case of the theorem is proved in \cite{He:irr}; see also
\cite{Gi:CC} and, respectively, \cite{GG:gaps} for the proofs when $M$
is symplectically aspherical, and when $M$ is rational and (CY) holds.
The negative monotone case is established in \cite{CGG, GG:nm}. 
For both classes of the ambient manifolds, the
proof of Theorem \ref{thm:CC} amounts to analyzing two cases. When $M$
is CY, the ``non-degenerate case'' of the Conley conjecture is based
on the observation, going back to \cite{SZ}, that unless $\varphi$ has
a fixed point of a particular type called a \emph{symplectically
  degenerate maximum} or an \emph{SDM}, new simple periodic orbits of
high period must be created to generate the Floer homology in degree
$n=\dim M/2$. (For negative monotone manifolds, the argument is more
involved.) In the ``degenerate case'' one shows that the presence of
an SDM fixed point implies the existence of infinitely many periodic
orbits; see \cite{Hi} and also \cite{Gi:CC}.  We outline the proof of
Theorem \ref{thm:CC} for rational CY manifolds in Section
\ref{sec:pf}.

Among closed symplectic manifolds $M$ with $c_1(TM)|_{\pi_2(M)}=0$ are
tori and Calabi--Yau manifolds. In fact, the manifolds meeting this
requirement are more numerous than it might seem. As is proved in
\cite{FP}, for every finitely presented group $G$ there exists a
closed symplectic 6-manifold $M$ with $\pi_1(M)=G$ and $c_1(TM)=0$. A
basic example of a negative monotone symplectic manifold is a smooth
hypersurface of degree $d>n+2$ in $\CP^{n+1}$.  More generally, a
transverse intersection $M$ of $m$ hypersurfaces of degrees
$d_1,\dotsc,d_m$ in $\CP^{n+m}$ is negative monotone iff $d_1+\dotsb
+d_m>n+m+1$; see \cite[p.\ 88]{LM} and also, for $n=4$, \cite[p.\
429--430]{MS}. A complete intersection $M$ is Calabi--Yau when
$d_1+\dotsb+d_m=n+m+1$ and (strictly) monotone when $d_1+\dotsb+
d_m<n+m+1$. Thus ``almost all'' complete intersections are negative
monotone. Note also that the product of a symplectically aspherical
and negative monotone manifolds is again negative monotone.

As has been pointed out in Section \ref{sec:CC-history}, we expect an
analog of the theorem to hold when $N$ is large. (In the (CY) case,
$N=\infty$.) However, at this stage it is only known that the number
of simple periodic orbits is bounded from below by $\lceil N/n \rceil$
when $M^{2n}$ is rational and $2N> 3n$; see \cite[Thm.\ 1.3]{GG:gaps}.

Let us now turn to the question of the generic existence of infinitely
many simple periodic orbits. Conjecturally, for any closed symplectic
manifold $M$, a $C^\infty$-generic Hamiltonian diffeomorphism has
infinitely many simple periodic orbits . This, however, is unknown
(somewhat surprisingly) and appears to be a non-trivial problem. In
all results to date, some assumptions on $M$ are required for the
proof.

\begin{Theorem}[Generic Existence]
\label{thm:generic}
Assume that $M^{2n}$ is a closed symplectic manifold with minimal
Chern number $N$, meeting one of the following requirements:
\begin{itemize}
\item[(i)] $\H_{\textrm{odd}}(M;R)\neq 0$ for some ring $R$,
\item[(ii)] $N\geq n+1$, 
\item[(iii)] $M$ is $\CP^n$ or a complex Grassmannian or a product of
  one of these manifolds with a closed symplectically aspherical
  manifold.

\end{itemize}
Then strongly non-degenerate Hamiltonian diffeomorphisms with
infinitely many simple periodic orbits form a $C^\infty$-residual set
in the space of all $C^\infty$-smooth Hamiltonian diffeomorphisms.
\end{Theorem}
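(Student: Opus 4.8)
The plan is to reduce the generic existence statement to the unconditional Conley conjecture (Theorem \ref{thm:CC}) whenever possible, and otherwise to a direct Floer-homological argument exploiting the hypotheses on $M$. First I would dispose of case (i). If $\H_{\textrm{odd}}(M;R)\neq 0$, then the Floer homology of any Hamiltonian diffeomorphism $\varphi$, with coefficients in $R$ and tensored over the relevant Novikov ring, is isomorphic to $\H_*(M;R)$, so it has nontrivial components in both even and odd degrees. For a non-degenerate $\varphi$ the generators of the Floer complex are the fixed points, graded by Conley--Zehnder index; the index parity is locally constant under continuation, so once $\varphi$ is strongly non-degenerate one compares the indices of fixed points of the iterates $\varphi^k$. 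The existence of a homology class in odd degree that must be represented for \emph{every} $k$ forces, by the standard index-iteration estimates (the mean index grows linearly while the index can only fluctuate by a bounded amount for orbits of bounded ``tower''), that infinitely many of these generators come from genuinely distinct simple orbits. The residuality comes from the usual Floer-theoretic genericity: strong non-degeneracy is $C^\infty$-generic (one perturbs $H$ and all finitely many relevant iterates in turn, a countable intersection of open dense sets), and having infinitely many simple orbits is then automatic on this residual set by the argument just sketched. So (i) gives a residual \emph{set}, not merely a dense one, precisely because the property is forced by a robust homological obstruction.

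Next I would handle case (ii), $N\geq n+1$. Here one cannot invoke odd homology, but the point is that when the minimal Chern number is this large, the grading on Floer homology is fine enough (the Novikov variable shifts degree by $2N\geq 2n+2 > \dim M$) that the quantum/Floer homology ``looks like'' the singular homology in a range, and in particular the fundamental class and the point class sit in degrees $0$ and $2n$ that are not identified by any quantum correction of the relevant size. One then runs the SDM dichotomy of \cite{SZ, Hi, Gi:CC}: a strongly non-degenerate $\varphi$ has no degenerate orbits at all, so in particular no symplectically degenerate maximum, hence the ``non-degenerate case'' mechanism applies and new simple orbits of large period must be created to generate $\HF_n$ (or the appropriate top/bottom degree) under iteration. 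As in case (i), strong non-degeneracy is $C^\infty$-residual, and on that residual set the conclusion holds, giving a residual set of Hamiltonian diffeomorphisms with infinitely many simple periodic orbits. For case (iii), $M=\CP^n$, a Grassmannian, or such a manifold times a symplectically aspherical factor, one uses the explicit structure of the quantum homology: these are monotone with known $N$ ($N=n+1$ for $\CP^n$, etc.), and in fact (iii) with $\CP^n$ is essentially already covered by (ii) since $N=n+1$; the Grassmannian and product cases require the same input plus the fact that quantum multiplication by the Novikov variable is, in the relevant degrees, still injective/surjective in a way that prevents a strongly non-degenerate diffeomorphism from balancing its Floer complex with finitely many towers. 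Throughout, the product with a symplectically aspherical manifold is handled by a Künneth argument in Floer homology together with the observation that the aspherical factor only contributes trivial Novikov shifts.

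The main obstacle, and the reason the theorem is stated only under hypotheses (i)--(iii), is the passage from ``the Floer homology of $\varphi^k$ is forced to be large/has the wrong parity'' to ``there are infinitely many \emph{simple} orbits.'' The delicate step is the index-iteration bookkeeping: one must show that the fixed points of $\varphi^k$ that are forced to exist cannot all be iterates of a fixed finite collection of simple orbits. For strongly non-degenerate $\varphi$ the iterated Conley--Zehnder index of a simple orbit grows linearly in $k$ with slope the mean index, and the deviation from linearity is bounded independently of $k$; so a single simple orbit can only contribute generators in a ``strip'' of degrees moving off to $\pm\infty$, and it can land in a fixed degree $n$ (mod the Novikov shift, which in the CY case is not available but in cases (ii)--(iii) only helps) for at most boundedly many $k$ — unless its mean index vanishes, which is exactly the degenerate/SDM situation excluded by strong non-degeneracy. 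Making this quantitative, uniformly over the finitely many hypothetical simple orbits, and reconciling it with the homological lower bound, is where the real work lies; in the (CY) and large-$N$ settings it follows the template of \cite{Gi:CC, GG:gaps}, and in case (i) it is comparatively soft because the odd-degree obstruction is visible already at the level of Betti numbers. I would expect the product cases in (iii) to require the most care, since one must combine the Künneth formula with the index-iteration estimates on each factor without losing the uniformity.
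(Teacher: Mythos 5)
Your proposal takes a genuinely different route from the paper's proof, and unfortunately both branches of your argument have gaps that make them fail as written.

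For case (i), the paper's argument is short but has a different structure: using Floer homology one shows that a non-degenerate $\varphi$ with finitely many periodic orbits must possess a non-hyperbolic fixed point (after passing to an iterate to dispose of negative hyperbolic ones, whose iterates would otherwise contribute to odd degree), and then invokes the Birkhoff--Lewis--Moser theorem, which asserts that \emph{generically} a symplectomorphism has infinitely many periodic orbits in every neighborhood of an elliptic orbit whose Birkhoff normal form satisfies a twist condition. The residuality in the theorem comes precisely from the genericity hypothesis in Birkhoff--Lewis--Moser, not from strong non-degeneracy alone. Your index-iteration argument instead tries to prove the \emph{unconditional} statement ``strongly non-degenerate $\Rightarrow$ infinitely many simple orbits,'' from which residuality would be trivial. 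But that unconditional claim is not established by your argument: when $N<\infty$ the Floer grading is only $\Z/2N$-valued after tensoring with the Novikov ring, so a single simple orbit with non-zero mean index, via recapping, contributes to a fixed residue class of degrees modulo $2N$ for \emph{all} $k$; nothing forces it to leave the odd part of the homology. The ``strip moving off to $\pm\infty$'' picture you invoke is correct only on the $\Z$-graded Floer complex of a CY manifold, which is not the setting of (i). This is why the paper does not attempt a direct index argument and brings in Birkhoff--Lewis--Moser instead.

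For cases (ii) and (iii), the gap is more basic. You invoke the SDM dichotomy and the ``non-degenerate case'' mechanism of \cite{SZ, Gi:CC}, arguing that strong non-degeneracy rules out an SDM and hence new simple orbits must be created. But that mechanism is exactly the proof of the Conley conjecture, which \emph{fails} on the manifolds in (ii)--(iii): an irrational rotation of $S^2=\CP^1$ is strongly non-degenerate, has no SDM, and has exactly two periodic orbits. The reason the mechanism does not apply is that for $N<\infty$ the degree $n$ is identified, via quantum corrections and the Novikov grading, with other degrees, and a fixed finite collection of non-degenerate orbits can account for $\HF_n(\varphi^k)$ for all $k$. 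What the paper actually uses, and what is missing from your proposal, are the homological resonance relations of \cite{GG:gaps, GK09}: on these manifolds, if $\varphi$ has only finitely many periodic orbits, the mean indices (and, in case (iii), actions) of its simple orbits are forced to satisfy certain explicit rational relations, and these relations are destroyed by an arbitrarily $C^\infty$-small perturbation of $\varphi$. The residual set is then the set of $\varphi$ for which the resonance relations fail to hold, intersected with the residual set of strongly non-degenerate diffeomorphisms. Your mention of quantum multiplication in the Grassmannian/product cases gestures toward the right ingredient (the paper's abstract form of (iii) is the existence of $u,w$ with $2n-\deg u<2N$ and $[M]=(\alpha u)*w$ in quantum homology), but without the resonance relations this observation does not by itself yield the conclusion.
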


This theorem is proved in \cite{GG:generic}.  In (iii), instead of
explicitly specifying $M$, we could have required that $M$ is monotone
and that there exists $u\in \H_{*<2n}(M)$ with $2n-\deg u<2N$ and
$w\in \H_{*<2n}(M)$ and $\alpha$ in the Novikov ring of $M$ such that
$[M]=(\alpha u)*w$ in the quantum homology. We refer the reader to
\cite{GG:generic} for other examples when this condition is satisfied
and a more detailed discussion.

The proof of the theorem when (i) holds is particularly
simple. Namely, in this case, a non-degenerate Hamiltonian
diffeomorphism $\varphi$ with finitely many periodic orbits must have
a non-hyperbolic periodic orbit. (Indeed, it follows from Floer theory
that $\varphi$ has a non-hyperbolic fixed point or a hyperbolic fixed
point with negative eigenvalues. When $\varphi$ has finitely many
periodic orbits, we can eliminate the latter case by passing to an
iteration of $\varphi$.) To finish the proof it suffices to apply the
Birkhoff--Lewis--Moser theorem, \cite{Mo}. The proofs of the remaining
cases rely on the fact, already mentioned in Section
\ref{sec:CC-history}, that under our assumptions on $M$ the indices
and/or actions of the periodic orbits of $\varphi$ must satisfy
certain resonance relations when $\varphi$ has only finitely many
periodic orbits; see \cite{GG:gaps, GK09}. These resonance relations
can be easily broken by a $C^\infty$-small perturbation of $\varphi$,
and the theorem follows.

It is interesting to look at these results in the context of the
closing lemma, which implies that the existence of a dense set of
periodic orbits is $C^1$-generic for Hamiltonian diffeomorphisms; see
\cite{PR}. Thus, once the $C^\infty$-topology is replaced by the
$C^1$-topology a much stronger result than the generic existence of
infinitely many periodic orbits holds -- the generic dense
existence. However, this is no longer true for the $C^r$-topology with
$r>\dim M$ as the results of M. Herman show (see \cite{He91a, He91b}),
and the above conjecture on the $C^\infty$-generic existence of
infinitely many periodic orbits can be viewed as a hypothetical
variant of a $C^\infty$-closing lemma. (Note also that in the closing
lemma one can require the perturbed diffeomorphism to be
$C^\infty$-smooth, but only $C^1$-close to the original one, as long
as only finitely many periodic orbits are created. It is not 
clear to us whether one can produce infinitely many periodic
orbits by a $C^\infty$-smooth $C^1$-small perturbation.)

\begin{Remark}
  The proofs of Theorem \ref{thm:CC} and \ref{thm:generic} utilize
  Hamiltonian Floer theory. Hence, either $M$ is required in addition
  to be weakly monotone (i.e., $M$ is monotone or $N>n-2$; see
  \cite{HS, MS, Ono:AC, Sa} for more details) or the proofs
  ultimately, although not explicitly, must rely on the machinery of
  multi-valued perturbations and virtual cycles (see \cite{FO, FOOO,
    LT} or, for the polyfold approach, \cite{HWZ:SC, HWZ:poly} and
  references therein). In the latter case, the ground field in the
  Floer homology must have zero characteristic.
\end{Remark}

\section{Outline of the proof of the Conley conjecture}
\label{sec:pf}

\subsection{Preliminaries}
\label{sec:CC-prelim}
In this section we recall, very briefly, several definitions and
results needed for the proof of the (CY) case of Theorem \ref{thm:CC}
and also some terminology used throughout the paper.

\subsubsection{The mean and Conley--Zehnder indices}
\label{sec:indices}
To every continuous path $\Phi\colon [0,\,1]\to\Sp(2n)$ starting at
$\Phi(0)=I$ one can associate the \emph{mean index} $\Delta(\Phi)\in
\R$, a homotopy invariant of the path with fixed end-points; see
\cite{Lo, SZ}.  To give a formal definition, recall first that a map
$\Delta$ from a Lie group to $\R$ is said to be a quasimorphism if it
fails to be a homomorphism only up to a constant, i.e.,
$|\Delta(\Phi\Psi)-\Delta(\Phi)-\Delta(\Psi)|<\const$, where the
constant is independent of $\Phi$ and $\Psi$.  Then one can prove that
there is a unique quasimorphism $\Delta\colon
\widetilde{\Sp}(2n)\to\R$, which is continuous and homogeneous (i.e.,
$\Delta(\Phi^k)=k\Delta(\Phi)$), and satisfies the normalization
condition: $\Delta(\Phi_0)=2$ for $\Phi_0(t)= e^{2\pi i t}\oplus
I_{2n-2}$ with $t\in [0,\,1]$, in the self-explanatory notation; see
\cite{BG}. This quasimorphism is the mean index. (The continuity
requirement holds automatically and is not necessary for the
characterization of $\Delta$, although this is not immediately
obvious. Furthermore, $\Delta$ is also automatically conjugation
invariant, as a consequence of the homogeneity.)

The mean index $\Delta(\Phi)$ measures the total rotation angle of
certain unit eigenvalues of $\Phi(t)$ and can be explicitly defined as
follows. For $A\in\Sp(2)$, set $\rho(A)=e^{i\lambda}\in S^1$ when $A$
is conjugate to the rotation in $\lambda$ counterclockwise,
$\rho(A)=e^{-i\lambda}\in S^1$ when $A$ is conjugate to the rotation
in $\lambda$ clockwise, and $\rho(A)=\pm 1$ when $A$ is hyperbolic
with the sign determined by the sign of the eigenvalues of $A$. Then
$\rho\colon \Sp(2)\to S^1$ is a continuous (but not $C^1$) function,
which is conjugation invariant and equal to $\det$ on $\U(1)$. A
matrix $A\in\Sp(2n)$ with distinct eigenvalues, can be written as the
direct sum of matrices $A_j\in\Sp(2)$ and a matrix with complex
eigenvalues not lying on the unit circle. We set $\rho(A)$ to be the
product of $\rho(A_j)\in S^1$. Again, $\rho$ extends to a continuous
function $\rho\colon \Sp(2n)\to S^1$, which is conjugation invariant
(and hence $\rho(AB)=\rho(BA)$) and restricts to $\det$ on
$\U(n)$; see, e.g., \cite{SZ}. Finally, given a path $\Phi\colon
[0,\,1]\to \Sp(2n)$, there is a continuous function $\lambda(t)$ such
that $\rho(\Phi(t))=e^{i\lambda(t)}$, measuring the total rotation of
the ``preferred'' eigenvalues on the unit circle, and we set
$\Delta(\Phi)=(\lambda(1)-\lambda(0))/2$.

Assume now that the path $\Phi$ is \emph{non-degenerate}, i.e., by
definition, all eigenvalues of the end-point $\Phi(1)$ are different
from one. We denote the set of such matrices in $\Sp(2n)$ by
$\Sp^*(2n)$. It is not hard to see that $\Phi(1)$ can be
connected to a hyperbolic symplectic transformation by a path $\Psi$
lying entirely in $\Sp^*(2n)$. Concatenating this path with $\Phi$, we
obtain a new path $\Phi'$. By definition, the \emph{Conley--Zehnder
  index} $\MUCZ(\Phi)\in\Z$ of $\Phi$ is $\Delta(\Phi')$. One can show that
$\MUCZ(\Phi)$ is well-defined, i.e., independent of
$\Psi$. Furthermore, following \cite{SZ}, let us call $\Phi$
\emph{weakly non-degenerate} if at least one eigenvalue of $\Phi(1)$
is different from one and \emph{totally degenerate} otherwise. The
path is \emph{strongly non-degenerate} if all its ``iterations''
$\Phi^k$ are non-degenerate, i.e., none of the eigenvalues of
$\Phi(1)$ is a root of unity.

The indices $\Delta$ and $\MUCZ$ have the following properties:
\begin{itemize}
\item[\reflb{(CZ1)}{(CZ1)}] $|\Delta(\Phi)-\MUCZ(\tPhi)|\leq n$ for
  every sufficiently small non-degenerate perturbation $\tPhi$ of
  $\Phi$; moreover, the inequality is strict when $\Phi$ is weakly
  non-degenerate.
\item[\reflb{(CZ2)}{(CZ2)}] $\MUCZ(\Phi^k)/k\to \Delta(\Phi)$ as
  $k\to\infty$, when $\Phi$ is strongly non-degenerate; hence, the
  name ``mean index'' for $\Delta$.
\end{itemize}  

Note that with our conventions the Conley--Zehnder index of a path
parametrized by $[0,\,1]$ and generated by a small negative definite
quadratic Hamiltonian on $\R^{2n}$ is $n$.

Let now $M^{2n}$ be a symplectic manifold and $x\colon S^1\to M$ be a
contractible loop.  A \emph{capping} of $x$ is a map $u\colon D^2\to
M$ such that $u|_{S^1}=x$. Two cappings $u$ and $v$ of $x$ are
considered to be equivalent if the integrals of $c_1(TM)$ and of
$\omega$ over the sphere $u\# v$ obtained by attaching $u$ to $v$ is
equal to zero. A capped closed curve $\bar{x}=(x,u)$ is, by
definition, a closed curve $x$ equipped with an equivalence class of a
capping. In what follows, a capping is always indicated by the bar.

For a capped one-periodic (or $k$-periodic) orbit $\bx$ of a
Hamiltonian $H\colon S^1\times M\to \R$, we can view the linearized
flow $d\varphi_H^t|_x$ along $x$ as a path in $\Sp(2n)$ by fixing a
trivialization of $u^*TM$ and restricting it to $x$. With this
convention in mind, the above definitions and constructions apply to
$\bx$ and, in particular, we have the mean index $\Delta(\bx)$ and,
when $x$ is non-degenerate, the Conley--Zehnder index $\MUCZ(\bx)$
defined. These indices are independent of the trivialization of
$u^*TM$, but may depend on the capping.  Furthermore, \ref{(CZ1)} and
\ref{(CZ2)} hold. The difference of the indices of $(x,u)$ and $(x,v)$
is equal to $2\left<c_1(TM),u\# v\right>$. Hence, when $M$ is a
symplectic CY manifold, the indices are independent of the capping and
thus assigned to $x$.  The terminology we introduced for paths in
$\Sp(2n)$ translates word-for-word to periodic orbits and Hamiltonian
diffeomorphisms, cf.\ Section \ref{sec:conventions}.

\subsubsection{Floer homology} 
\label{sec:FH}
In this section, we recall the construction and basic properties of
global, filtered and local Hamiltonian Floer homology on a weakly
monotone, rational symplectic manifold $(M^{2n},\omega)$.

The \emph{action} of a one-periodic Hamiltonian $H$ on a capped
loop $\bar{x}=(x,u)$ is, by definition,
$$
\CA_H(\bar{x})=-\int_u\omega+\int_{S^1} H_t(x(t))\,dt.
$$
The space of capped closed curves is a covering space of the space of
contractible loops, and the critical points of $\CA_H$ on this
covering space are exactly capped one-periodic orbits of $X_H$. The
\emph{action spectrum} $\CS(H)$ of $H$ is the set of critical values
of $\CA_H$. This is a zero measure set; see, e.g., \cite{HZ}.  When M
is rational, $\CS(H)$ is a closed and hence nowhere dense
set. (Otherwise, $\CS(H)$ is everywhere dense.)  These definitions
extend to $k$-periodic orbits and Hamiltonians in the obvious
way. Clearly, the action functional is homogeneous with respect to
iteration:
\begin{equation*}
\CA_{H^{\nat k}}(\bx^k)=k\CA_H(\bx).
\end{equation*}
Here $\bx^k$ stands for the $k$th iteration of the capped orbit $\bx$.

For a Hamiltonian $H\colon S^1\times M\to \R$ and $\varphi=\varphi_H$,
we denote by $\HF_*(\varphi)$ or, when the action filtration is
essential, by $\HF_*^{(a,\,b)}(H)$ the \emph{Floer homology} of $H$,
where $a$ and $b$ are not in $\CS(H)$. We refer the reader to, e.g.,
\cite{MS, Sa} for a detailed construction of the Floer homology and to
\cite{GG:gaps} for a treatment particularly tailored for our
purposes. Here we only mention that, when $H$ is non-degenerate,
$\HF_*^{(a,\,b)}(H)$ is the homology of a complex generated by the
capped one-periodic orbits of $H$ with action in the interval
$(a,\,b)$ and graded by the Conley--Zehnder index. Furthermore,
$\HF_*(\varphi)\cong \H_{*+n}(M)\otimes \Lambda$, where $\Lambda$ is a
suitably defined Novikov ring. As a consequence, $\HF_n(\varphi)\neq
0$ when $M$ is symplectic CY. (For our purposes it is sufficient to
take $\Z_2$ as the ground field.)

When $x$ is an isolated one-periodic orbit of $H$, one can associate
to it the so-called \emph{local Floer homology} $\HF_*(x)$ of
$x$. This is the homology of a complex generated by the orbits $x_i$
which $x$ splits into under a $C^\infty$-small non-degenerate
perturbation. The differential $\p$ is defined similarly to the
standard Floer differential, and to show that $\p^2=0$ it suffices to
prove that the Floer trajectories connecting the orbits $x_i$ cannot
approach the boundary of an isolating neighborhood of $x$. This is an
immediate consequence of \cite[Thm.\ 3]{Fl}; see also \cite{McL} for a
different proof. The resulting homology is well defined, i.e.,
independent of the perturbation. The local Floer homology $\HF_*(x)$
carries only a relative grading. To have a genuine $\Z$-grading it is
enough to fix a trivialization of $TM|_x$. In what follows, such a
trivialization will usually come from a capping of $x$, and we will
then write $\HF_*(\bx)$.  Clearly, the grading is independent of the
capping when $c_1(TM)|_{\pi_2(M)}=0$. Hence, in the symplectic (CY)
case, the local Floer homology is associated to the orbit $x$
itself. With relative grading, the local Floer homology is defined for
the germ of a time-dependent Hamiltonian flow or, when $x$ is treated
as a fixed point, of a Hamiltonian diffeomorphism. The local Floer
homology is invariant under deformations of $H$ as long as $x$ stays
uniformly isolated.


\begin{Example}
\label{ex:lf}
When $x$ is non-degenerate, $\HF_*(\bx)\cong\Z_2$ is concentrated in
degree $\MUCZ(\bx)$. When $x$ is an isolated critical point of an
autonomous $C^2$-small Hamiltonian $F$ (with trivial capping), the
local Floer homology is isomorphic to the local Morse homology
$\HM_{*+n}(F,x)$ of $F$ at $x$ (see \cite{Gi:CC}), also known as
critical modules, which is in turn isomorphic to
$\H_*(\{F<c\}\cup\{x\}, \{F<c\})$, where $F(x)=c$. The isomorphism
$\HF_*(x)\cong\HM_{*+n}(F,x)$ is a local analog of the isomorphism
between the Floer and Morse homology groups of a $C^2$-small
Hamiltonian; see \cite{SZ} and references therein.
\end{Example}

Let us now state three properties of local Floer homology, which are
essential for what follows.

First of all, $\HF_*(\bx)$ is supported in the interval
$[\Delta(\bx)-n,\Delta(\bx)+n]$:
\begin{equation}
\label{eq:supp}
\supp \HF_*(\bx)\subset [\Delta(\bx)-n,\Delta(\bx)+n],
\end{equation}
i.e., the homology vanishes in the degrees outside this
interval. Moreover, when $x$ is weakly non-degenerate, the support
lies in the open interval. These facts readily follow from \ref{(CZ1)}
and the continuity of the mean index.

Secondly, the local Floer homology groups are building blocks for the
ordinary Floer homology. Namely, assume that for $c\in\CS(H)$ there
are only finitely many one-periodic orbits $\bx_i$ with
$\CA_H(\bx_i)=c$. Then all these orbits are isolated and
$$
\HF_*^{(c-\eps,\,c+\eps)}(H)=\bigoplus \HF_*(\bx_i),
$$
when $M$ is rational and $\eps>0$ is sufficiently small. Furthermore,
it is easy to see that, even without the rationality condition,
$\HF_l(\varphi)=0$ when all one-periodic orbits of $H$ are isolated
and have local Floer homology vanishing in degree $l$.

Finally, the local Floer homology enjoys a certain periodicity
property as a function of the iteration order. To be more specific,
let us call a positive integer $k$ an \emph{admissible iteration} of
$x$ if the multiplicity of the generalized eigenvalue one for the
iterated linearized Poincar\'e return map $d\varphi^k|_x$ is equal to
its multiplicity for $d\varphi|_x$. In other words, $k$ is admissible
if and only if it is not divisible by the degrees of roots of unity
among the eigenvalues of $d\varphi|_x$. For instance, when $x$ is
totally degenerate (the only eigenvalue is one) or strongly
non-degenerate (no roots of unity among the eigenvalues), all $k\in
\N$ are admissible. For any $x$, all sufficiently large primes are
admissible. We have

\begin{Theorem}[\cite{GG:gap}]
\label{thm:persist-lf}
Let $\bx$ be a capped isolated one-periodic orbit of a Hamiltonian
$H\colon S^1\times M\to \R$. Then $x^k$ is also an isolated
one-periodic orbit of $H^{\nat k}$ for all admissible $k$, and the
local Floer homology groups of $\bx$ and $\bx^k$ coincide up to a
shift of degree:
\begin{equation*}
\HF_*(\bx^k)=\HF_{*+s_k}(\bx)
\quad\text{for some $s_k$.}
\end{equation*}
Furthermore, $\lim_{k\to \infty}s_k/k=\Delta(\bx)$ and
$s_k=\Delta(\bx)$ for all $k$ when $x$ is totally degenerate.
Moreover, when $\HF_{n+\Delta(\bx)}(\bx)\neq 0$, the orbit $x$ is
totally degenerate.
\end{Theorem}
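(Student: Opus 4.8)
The plan is to build everything on the periodicity statement $\HF_*(\bx^k)=\HF_{*+s_k}(\bx)$ and on the support estimate \eqref{eq:supp}, together with the admissibility bookkeeping. First I would recall that for admissible $k$ the orbit $x^k$ is automatically isolated (this is part of Theorem \ref{thm:persist-lf}, which we are allowed to cite, but in practice it follows from the fact that admissibility means the eigenvalue $1$ of $d\varphi^k|_x$ has the same multiplicity as for $d\varphi|_x$, so no extra degeneracy is introduced by iteration). Next, I would apply \eqref{eq:supp} to both $\bx$ and $\bx^k$: the group $\HF_*(\bx)$ is supported in $[\Delta(\bx)-n,\Delta(\bx)+n]$, and $\HF_*(\bx^k)$ is supported in $[\Delta(\bx^k)-n,\Delta(\bx^k)+n]=[k\Delta(\bx)-n,k\Delta(\bx)+n]$ by homogeneity of the mean index. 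Comparing these two windows with the identity $\HF_*(\bx^k)=\HF_{*+s_k}(\bx)$ forces the shift $s_k$ to satisfy $|s_k-k\Delta(\bx)|\le 2n$ for every admissible $k$; taking $k\to\infty$ along admissible iterations (all sufficiently large primes work) yields $s_k/k\to\Delta(\bx)$.

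For the totally degenerate case, the point is that when the only eigenvalue of $d\varphi|_x$ is $1$, \emph{every} $k\in\N$ is admissible, and moreover the linearized flow can be deformed, through paths with the same (degenerate) endpoint data, so that the mean index is computed from a genuinely "rotationless" path; concretely $\Delta(\bx^k)=k\Delta(\bx)$ and the local model is that of an iterated autonomous flow. Here I would invoke the isomorphism of Example \ref{ex:lf} between local Floer homology and local Morse homology $\HM_{*+n}(F,x)$ of a $C^2$-small generating Hamiltonian, which for a totally degenerate orbit is genuinely iteration-independent: passing from $H$ to $H^{\nat k}$ does not change the germ of the time-one map near the fixed point up to the relevant normalization, so $\HF_*(\bx^k)$ and $\HF_*(\bx)$ are literally the same group placed in degree shifted by exactly $k\Delta(\bx)$, i.e.\ $s_k=\Delta(\bx)$ (note $\Delta(\bx)$ is then an even integer, so $s_k$ lands on an integer as it must). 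The cleanest way to nail $s_k=\Delta(\bx)$ rather than merely $s_k\approx k\Delta(\bx)$ is to combine $|s_k-k\Delta(\bx)|\le 2n$ with the \emph{homogeneity} $s_{km}$ behaving additively enough under composition of iterations that any bounded periodic-in-$k$ correction term must vanish; I expect this bookkeeping to be the most delicate part of the argument.

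Finally, for the last assertion — that $\HF_{n+\Delta(\bx)}(\bx)\ne 0$ forces $x$ to be totally degenerate — I would argue by contrapositive. If $x$ is weakly non-degenerate, then by the sharpened form of \eqref{eq:supp} the support of $\HF_*(\bx)$ lies in the \emph{open} interval $(\Delta(\bx)-n,\Delta(\bx)+n)$, which still contains $n+\Delta(\bx)$, so this alone is not enough. The extra input needed is the behavior under iteration: if $x$ is weakly non-degenerate but not totally degenerate, pick an admissible iteration $k$ for which $x^k$ is still weakly non-degenerate (again all large primes work, since admissibility preserves the eigenvalue-$1$ multiplicity and a non-unit eigenvalue stays off $1$ under iteration); then $\HF_*(\bx^k)$ is supported in the open interval $(\Delta(\bx^k)-n,\Delta(\bx^k)+n)$, i.e.\ the top degree $n+\Delta(\bx^k)=n+k\Delta(\bx)$ is \emph{not} attained. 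But $\HF_*(\bx^k)=\HF_{*+s_k}(\bx)$, so nonvanishing of $\HF_{n+\Delta(\bx)}(\bx)$ would give nonvanishing of $\HF_{n+\Delta(\bx)+s_k}(\bx^k)$; choosing $k$ so that $n+\Delta(\bx)+s_k$ equals the forbidden top degree $n+k\Delta(\bx)$ — which holds precisely when $s_k=(k-1)\Delta(\bx)$, consistent with $s_k\to\Delta(\bx)\cdot$(something) up to the bounded correction — yields a contradiction. The hard part will be making this "choose the right admissible $k$" step precise: one must show that the bounded discrepancy between $s_k$ and its asymptotic value $k\Delta(\bx)/?$ can be controlled well enough that the degree $n+\Delta(\bx)$ really does land at an extreme (and hence forbidden) degree for some admissible iterate when $x$ fails to be totally degenerate. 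I would handle this by a careful analysis of the splitting of $d\varphi|_x$ into its unipotent part and the rest, tracking the mean-index contribution of each Jordan block, so that the top of the support of $\HF_*(\bx^k)$ is strictly below $n+k\Delta(\bx)$ by a margin that grows (or at least stays positive) along a suitable sequence of admissible $k$.
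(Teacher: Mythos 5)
Your proposal \emph{assumes} the central claim --- the existence of a shift $s_k$ with $\HF_*(\bx^k)=\HF_{*+s_k}(\bx)$ --- and argues only for its asymptotic and boundary consequences. That isomorphism is precisely the nontrivial content of the theorem; without proving it there is nothing to take a limit of or to compare against the support estimate. The mechanism you briefly invoke for the totally degenerate case is in fact the engine of the whole proof: one reduces the general case, by a K\"unneth--type argument applied to the splitting of $d\varphi|_x$ into its unipotent part and a complementary nondegenerate part (together with recapping/loop composition), to a totally degenerate \emph{constant} orbit with trivial capping, so that $\Delta(\bx)=0$; there Example~\ref{ex:lf} identifies $\HF_*(\bx)\cong \HM_{*+n}(F,x)$ and $\HF_*(\bx^k)\cong \HM_{*+n}(kF,x)$, and local Morse homology is manifestly unchanged under $F\mapsto kF$, giving $s_k=0$. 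Unwinding the reduction is what produces $s_k$ in general. Your plan leaves this entire core step out (and citing ``Theorem \ref{thm:persist-lf}'' to justify that $x^k$ is isolated is circular; the correct reference for that sub-claim is the classical argument of Chow--Mallet-Paret--Yorke).

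There are also two concrete errors. In the totally degenerate case you write that the groups sit ``in degree shifted by exactly $k\Delta(\bx)$, i.e.\ $s_k=\Delta(\bx)$''; these two phrases are mutually inconsistent unless $\Delta(\bx)=0$, so the paragraph does not actually establish $s_k=\Delta(\bx)$ --- one must first normalize $\Delta(\bx)$ to zero by the reduction above. For the final assertion you say the open interval $(\Delta(\bx)-n,\Delta(\bx)+n)$ ``still contains $n+\Delta(\bx)$'' and conclude a longer argument is needed --- but $n+\Delta(\bx)$ is the right \emph{endpoint} of that interval, so it is excluded from the open interval. The sharpened form of \eqref{eq:supp} therefore gives the claim immediately by contraposition: if $x$ is weakly non-degenerate then $\supp\HF_*(\bx)\subset(\Delta(\bx)-n,\Delta(\bx)+n)$, forcing $\HF_{n+\Delta(\bx)}(\bx)=0$. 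The Jordan-block and ``choose the right admissible $k$'' machinery you sketch is unnecessary and, as written, not precise enough to close. Only the derivation of $s_k/k\to\Delta(\bx)$ from \eqref{eq:supp} and $\Delta(\bx^k)=k\Delta(\bx)$ is correct, and it is the easiest part and contingent on the part you skipped.
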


The first part of this theorem is an analog of the result from
\cite{GM} for Hamiltonian diffeomorphisms. One can replace a capping
of $x$ by a trivialization of $TM|_x$ with the grading and indices now
associated with that trivialization. The theorem is not obvious,
although not particularly difficult. First, note that by using a
variant of the K\"unneth formula and some simple tricks, one can
reduce the problem to the case where $x$ is a totally degenerate
constant orbit with trivial capping.  (Hence, in particular,
$\Delta(\bx)=0$). Then we have the isomorphisms
$\HF_*(\bx)=\HM_{*+n}(F,x)$, where $F\colon M\to\R$, near $x$, is the
generating function of $\varphi$, and
$\HF_*(\bx^k)=\HM_{*+n}(kF,x)=\HM_{*+n}(F,x)$. Thus, in the totally
degenerate case, $s_k=0$, and the theorem follows; see \cite{GG:gap}
for a complete proof. (The fact that $x^k$ is automatically isolated
when $k$ is admissible, reproved in \cite{GG:gap}, has been known for
some time; see \cite{CMPY}.)

As a consequence of Theorem \ref{thm:persist-lf} or of \cite{CMPY},
the iterated orbit $x^k$ is automatically isolated for all $k$ if it
is isolated for some finite collection of iterations $k$ (depending on
the degrees of the roots of unity among the eigenvalues). Furthermore,
it is easy to see that then the map $k\mapsto \HF_*(\bx^k)$ is periodic up to a
shift of grading, and hence the function $k\mapsto \dim\HF_*(\bx^k)$
is bounded.

An isolated orbit $x$ is said to be \emph{homologically non-trivial}
if $\HF_*(x)\neq 0$. (The choice of trivialization along the orbit is
clearly immaterial here.) These are the orbits detected by the
filtered Floer homology. For instance, a non-degenerate orbit is
homologically non-trivial. By Theorem \ref{thm:persist-lf}, an
admissible iteration of a homologically non-trivial orbit is again
homologically non-trivial. It is not known if, in general, an
iteration of a homologically non-trivial orbit can become
homologically trivial while remaining isolated.

We refer the reader to \cite{Gi:CC, GG:gaps, GG:gap} for a further
discussion of local Floer homology.

As we noted in Section \ref{sec:CC-results}, the proof of the general
case of the Conley conjecture for symplectic CY manifolds hinges on
the fact that the presence of an orbit of a particular type, a
\emph{symplectically degenerate maximum} or an \emph{SDM},
automatically implies the existence of infinitely many simple periodic
orbits.  To be more precise, an isolated periodic orbit $x$ is said to be
a symplectically degenerate maximum if $\HF_{n}(x)\neq 0$ and
$\Delta(x)=0$ for some trivialization. This definition makes sense
even for the germs of Hamiltonian flows or Hamiltonian
diffeomorphisms.  An SDM orbit is necessarily totally degenerate by
the ``moreover'' part of \eqref{eq:supp} or Theorem \ref{thm:persist-lf}.
Sometimes it is also convenient to say that an orbit is
an SDM with respect to a particular capping. For instance, a capped
orbit $\bx$ is an SDM if it is an SDM for a trivialization associated
with the capping, i.e., $\HF_{n}(\bx)\neq 0$ and $\Delta(\bx)=0$.

\begin{Example}
\label{ex:sdm} 
Let $H\colon \R^{2n}\to \R$ be an autonomous Hamiltonian with an
isolated critical point at $x=0$. Assume furthermore that $x$ is a
local maximum and that all eigenvalues (in the sense of, e.g.,
\cite[App.\ 6]{Ar}) of the Hessian $d^2H(x)$ are equal to zero. Then
$x$ (with constant trivialization or, equivalently, trivial capping)
is an SDM of $H$. For instance, the origin in $\R^2$ is an SDM for
$H(p,q)=p^4+q^4$ or $H(p,q)=p^2+q^4$, but not for $H(p,q)=ap^2+bq^2$
for any $a\neq 0$ and $b\neq 0$.
\end{Example}

\begin{Remark}
  There are several other ways define an SDM. The following conditions
  are equivalent (see \cite[Prop.\ 5.1]{GG:gap}):
\begin{itemize}

\item the orbit $\bx$ is a symplectically degenerate maximum of $H$;

\item $\HF_n(\bx^{k_i})\neq 0$ for some sequence of admissible
  iterations $k_i\to\infty$;

\item the orbit $x$ is totally degenerate, $\HF_n(\bx)\neq 0$ and
  $\HF_n(\bx^k)\neq 0$ for at least one admissible iteration $k\geq
  n+1$.

\end{itemize}
\end{Remark}

\subsection{The non-degenerate case of the Conley conjecture} 
\label{sec:CC-pf}
The following proposition settling, in particular, the non-degenerate
case of the Conley conjecture for symplectic CY manifolds is a
refinement of the main result from \cite{SZ}. It is proved in
\cite{Gi:CC, GG:gaps}, although the argument given below is somewhat
different from the original proof.

\begin{Proposition}
\label{prop:CC-nd}
Assume that $c_1(TM)|_{\pi_2(M)}=0$ and that $\varphi=\varphi_H$ has
finitely many fixed points and none of these points is an SDM.  (This
is the case when, e.g., $\varphi$ is weakly non-degenerate.) Then
$\varphi$ has a simple periodic orbit of period $k$ for every
sufficiently large prime~$k$.
\end{Proposition}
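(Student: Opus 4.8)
The plan is to argue by contradiction: suppose that for infinitely many primes $k$ the iteration $\varphi^k = \varphi_{H^{\nat k}}$ has \emph{no} simple periodic orbit of period $k$, so that every $k$-periodic orbit of $H^{\nat k}$ is an iteration $x^k$ of a one-periodic orbit $x$ of $H$. Since $\varphi$ has only finitely many fixed points and every sufficiently large prime is an admissible iteration for each of them, Theorem~\ref{thm:persist-lf} applies: each $x^k$ is isolated, and $\HF_*(\bx^k) = \HF_{*+s_k}(\bx)$ with $s_k/k \to \Delta(\bx)$. The first step is to pin down the Floer homology in the middle degree. Because $M$ is symplectic CY, the indices and local Floer homology are capping-independent and $\HF_n(\varphi^k) \cong \H_{2n}(M)\otimes\Lambda$ is nonzero for all $k$; on the other hand, by the ``building blocks'' property, $\HF_n(\varphi^k) = \bigoplus_{\bx} \HF_n(\bx^k)$ summed over a set of capped $k$-periodic orbits, which under our contradiction hypothesis are all iterations. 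Hence for infinitely many primes $k$ there is a one-periodic orbit $x$ of $H$ (one of finitely many, so we may fix a single such $x$ along a subsequence) with $\HF_n(\bx^k)\neq 0$ for infinitely many admissible $k$.

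The second step is to translate this into a statement about $x$ itself. From $\HF_n(\bx^k) = \HF_{n + (n - s_k)}(\bx)$ and the fact that $\HF_*(\bx)$ is a fixed finite-dimensional graded group supported, by \eqref{eq:supp}, in $[\Delta(\bx)-n,\,\Delta(\bx)+n]$, we get that $n - s_k$ must hit one of finitely many values infinitely often; combined with $s_k/k \to \Delta(\bx)$ this forces $\Delta(\bx) = 0$ (otherwise $s_k \to \pm\infty$ and $\HF_n(\bx^k)=\HF_{n + (n-s_k)}(\bx)$ would eventually vanish). Once $\Delta(\bx)=0$, the ``moreover'' clause of Theorem~\ref{thm:persist-lf} gives $s_k = 0$ for all admissible $k$, so $\HF_n(\bx^k) = \HF_n(\bx) \neq 0$ for all large primes $k$; and then $x$ is totally degenerate with $\HF_n(\bx)\neq 0$ and $\Delta(\bx)=0$ — that is, $x$ is an SDM. (Alternatively one can invoke the equivalent characterizations of an SDM listed in the Remark, which is exactly tailored to this deduction.) This contradicts the hypothesis that $\varphi$ has no SDM fixed point.

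The third step is merely bookkeeping: the contradiction hypothesis was ``no simple $k$-periodic orbit for infinitely many primes $k$,'' and we have refuted it, so $\varphi$ has a simple periodic orbit of period $k$ for all sufficiently large primes $k$. For the parenthetical remark, if $\varphi$ is weakly non-degenerate then every one-periodic orbit $x$ has at least one eigenvalue of $d\varphi|_x$ different from one, so by the strict form of \eqref{eq:supp} the support of $\HF_*(\bx)$ lies in the \emph{open} interval $(\Delta(\bx)-n,\,\Delta(\bx)+n)$; in particular $\HF_n(\bx)$ can only be nonzero if $\Delta(\bx)\neq 0$ fails to be an obstruction, but an SDM requires $\Delta(\bx)=0$ and $\HF_n(\bx)\neq 0$ simultaneously, which is incompatible with weak non-degeneracy — hence no fixed point is an SDM.

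The main obstacle is the second step: extracting $\Delta(\bx)=0$ and then $s_k=0$ rigorously. The subtlety is that $\HF_n(\bx^k)\neq 0$ for infinitely many $k$ does not immediately localize to a \emph{single} orbit $x$ unless one first uses finiteness of $\Fix(\varphi)$ to pass to a subsequence, and then one must be careful that ``admissible'' primes are cofinal and that the degree shift $s_k$ genuinely stabilizes at $0$ (not merely stays bounded) once $\Delta(\bx)=0$ — this is precisely the content of the totally-degenerate case of Theorem~\ref{thm:persist-lf}, which must be invoked, not re-derived. Everything else is a direct assembly of the properties of local Floer homology recalled above.
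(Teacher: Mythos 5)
Your proof is correct and is essentially the contrapositive of the paper's argument: the paper directly partitions the fixed points into the three cases $\Delta(x)\neq 0$, $\Delta(x)=0$ with $\HF_n(x)=0$, and the excluded SDM case, showing each contributes nothing to $\HF_n(\varphi^k)$ for large primes $k$, whereas you assume no simple orbit and extract an SDM; both rest on the same three ingredients (nonvanishing of $\HF_n(\varphi^k)$, the support estimate \eqref{eq:supp}, and Theorem \ref{thm:persist-lf}). Two small repairs: the degree shift should read $\HF_n(\bx^k)=\HF_{n+s_k}(\bx)$, not $\HF_{n+(n-s_k)}(\bx)$ (this does not alter the conclusion, since boundedness of $n+s_k$ still forces $\Delta(\bx)=0$); and the clause ``once $\Delta(\bx)=0$, the `moreover' clause gives $s_k=0$'' reverses the implication in the theorem --- one must first conclude that $x$ is totally degenerate, by applying the ``moreover'' clause to $x^k$ itself (using $\HF_{n+\Delta(\bx^k)}(\bx^k)=\HF_n(\bx^k)\neq 0$ and $\Delta(\bx^k)=0$), and only then does $s_k=\Delta(\bx)=0$ follow. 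Your parenthetical ``alternatively'' route via the second bullet of the Remark after Example \ref{ex:sdm}, namely that $\HF_n(\bx^{k_i})\neq 0$ along admissible $k_i\to\infty$ already characterizes an SDM, is the cleanest way to close the argument and sidesteps this wrinkle entirely.
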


The key to the proof is the fact that $\HF_n(\varphi^k)\neq 0$ for all
$k$ and that, even when $\omega|_{\pi_2(M)}\neq 0$, the condition
$c_1(TM)|_{\pi_2(M)}=0$ guarantees that all recappings of every orbit
have the same mean index and the same (graded) local Floer homology.

\begin{proof}
  First, note that when $k$ is prime every $k$-periodic orbit is
  either simple or the $k$-th iteration of a fixed point. For every
  isolated fixed point $x$, we have three mutually exclusive
  possibilities:
\begin{itemize}
\item $\Delta(x)\neq 0$,
\item $\Delta(x)=0$ and $\HF_n(x)=0$,
\item $\Delta(x)=0$ but $\HF_n(x)\neq 0$.
\end{itemize}
Here we are using the fact that $M$ is CY, and hence the indices are
independent of the capping. The last case, where $x$ is an SDM, is
rulled out by the assumptions of the proposition.

In the first case, $\HF_n(x^k)=0$ when $k|\Delta(x)|>2n$, and hence
$x$ cannot contribute to $\HF_n(\varphi^k)$ when $k$ is large. In the
second case, $\HF_n(x^k)=\HF_n(x)=0$ for all admissible iterations by
Theorem \ref{thm:persist-lf}.  In particular, $x$ again cannot
contribute to $\HF_n(\varphi^k)$ for all large primes $k$. It follows
that, under the assumptions of the proposition, $\HF_n(\varphi^k)=0$
for all large primes $k$ unless $\varphi$ has a simple periodic orbit
of period~$k$.
\end{proof}

\begin{Remark}
  Although this argument relies on Theorem \ref{thm:persist-lf} which
  is not entirely trivial, a slightly different logical organization
  of the proof would enable one to utilize on a much simpler of
  version of the theorem; see \cite{Gi:CC, GG:gaps}.
\end{Remark}

With Proposition \ref{prop:CC-nd} established, it remains to deal with
the degenerate case of the Conley conjecture, i.e., the case where
$\varphi$ has an SDM. We do this in the next section; see Theorem
\ref{thm:sdm-conley}.

\begin{Remark}
  When $M$ is negative monotone and $\varphi$ has an SDM fixed point,
  the degenerate case of Theorem \ref{thm:CC} follows from Theorem
  \ref{thm:sdm-conley}, just as for the CY manifolds. However, the
  non-degenerate case requires a totally new proof. The argument
  relies on the sub-additivity property of spectral invariants; see
  \cite{CGG,GG:nm} for more details.
\end{Remark}

\subsection{Symplectically degenerate maxima}
\label{sec:sdm}
In this section, we show that a Hamiltonian diffeomorphism with an SDM
fixed point has infinitely many simple periodic orbits. We assume that
$M$ is rational as in \cite{GG:gaps}. The case of irrational CY
manifolds is treated in \cite{He:irr}.

\begin{Theorem}[\cite{GG:gaps}]
\label{thm:sdm-conley}
Let $\varphi=\varphi_H$ be a Hamiltonian diffeomorphism of a closed
rational symplectic manifold $M$, generated by a one-periodic
Hamiltonian $H$.  Assume that some iteration $\varphi^{k_0}$ has
finitely many $k_0$-periodic orbits and one of them, $\bx$, is an SDM.
\begin{itemize}

\item[(i)] Then $\varphi$ has infinitely many simple periodic orbits.

\item[(ii)] If, in addition, $k_0=1$ and $\omega|_{\pi_2(M)}=0$ or
  $c_1(M)|_{\pi_2(M)}=0$, then $\varphi$ has simple periodic orbits of
  arbitrarily large prime period.

\end{itemize}
\end{Theorem}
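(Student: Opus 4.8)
The plan is to exploit the action filtration on Floer homology together with the persistence of local Floer homology under iteration (Theorem \ref{thm:persist-lf}) to show that if $\varphi$ had only finitely many simple periodic orbits, then the Floer homology in degree $n$ would have to vanish at infinitely many action levels, contradicting the fact that $\HF_n(\varphi^k)\neq 0$ for all $k$ (which holds since $M$ is rational and, under the extra hypothesis in (ii), CY). First I would reduce to the case $k_0=1$: replacing $H$ by $H^{\nat k_0}$ and $\varphi$ by $\varphi^{k_0}$, an SDM orbit stays an SDM (a totally degenerate orbit with $\HF_n\neq 0$ and $\Delta=0$ remains so under iteration by Theorem \ref{thm:persist-lf}), and a simple periodic orbit of $\varphi$ of period $k$ gives a periodic orbit of $\varphi^{k_0}$ of period $k/\gcd(k,k_0)$, so infinitely many simple orbits for $\varphi^{k_0}$ yield infinitely many for $\varphi$. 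After this reduction, $\bx$ is a capped one-periodic SDM orbit, so $\Delta(\bx)=0$, $x$ is totally degenerate, and $\HF_n(\bx)\neq 0$; by Theorem \ref{thm:persist-lf} the iterate $\bx^k$ is isolated for every $k$, with $s_k=\Delta(\bx)=0$, hence $\HF_n(\bx^k)\neq 0$ for all $k$, and $\CA_{H^{\nat k}}(\bx^k)=k\,\CA_H(\bx)=:kc$.

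The core of the argument is an \emph{action-localization} step: one shows that the SDM orbit $\bx^k$ is actually a \emph{global} maximizer of the action among the generators contributing to $\HF_n$ at its level, in a way that persists in the limit. Concretely, following \cite{GG:gaps}, I would establish that for a suitable small $\delta>0$ (independent of $k$) the orbit $\bx^k$ contributes to $\HF_n^{(kc-\delta,\,kc+\delta)}(H^{\nat k})$, and that this class survives in $\HF_n^{(kc-\delta,\,\infty)}$ but not in $\HF_n^{(-\infty,\,kc-\delta)}$ — this is where the ``maximum'' in SDM is used, via the variational characterization of an SDM as an orbit whose action is locally maximal and whose local Floer homology sits in top degree $n$. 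The precise mechanism is the squeezing/holomorphic-curve estimate that controls Floer trajectories entering an isolating neighborhood of $\bx^k$ (built on \cite{Fl} as in the discussion after Example \ref{ex:lf}); one needs the isolating neighborhoods and the energy thresholds to be uniform in $k$, which is exactly what the SDM condition buys, since the local model near $\bx^k$ is (by the generating-function reduction in Theorem \ref{thm:persist-lf}) essentially $k$-independent up to the scaling $kF$ of the generating function.

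Once this is in place, the conclusion is a counting argument. Suppose $\varphi$ has finitely many simple periodic orbits, say with underlying capped orbits $\bar y_1,\dots,\bar y_m$ (including $\bx$) and their iterates. The carrier levels $\CA_{H^{\nat k}}(\bar y_j^{\,k})=k\,\CA_H(\bar y_j)$ grow linearly in $k$ with finitely many slopes; moreover $\dim\HF_*(\bar y_j^{\,k})$ is bounded in $k$ (periodicity up to grading shift). Since $\HF_n(\varphi^k)\neq 0$, for each $k$ some $\bar y_j^{\,k}$ carries a degree-$n$ class surviving in the total homology; by the localization step, when $\bar y_j=\bx$ this forces the level $kc$ to be ``visible'' in $\HF_n^{(-\infty,\infty)}$, while the SDM squeezing says it is \emph{not} detected from below. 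Following the period-$k$ bookkeeping of \cite{GG:gaps} — comparing the finitely many arithmetic progressions of action values $k\,\CA_H(\bar y_j)$ and using rationality of $\CS(H)$ to control coincidences — one derives that for infinitely many $k$ (in fact for all large prime $k$, giving (ii)) there must exist a $k$-periodic orbit not among the $\bar y_j^{\,k}$, i.e.\ a new simple periodic orbit of period $k$; this contradicts finiteness. I expect the \textbf{main obstacle} to be precisely the uniformity in $k$ of the isolating neighborhoods and energy bounds in the localization step — making the SDM ``maximum'' property robust under arbitrarily high iteration, so that the $\HF_n$ class genuinely persists — since everything else (the reduction to $k_0=1$, the arithmetic of action levels under rationality, and the final counting) is comparatively routine bookkeeping.
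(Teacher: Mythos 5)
Your proposal identifies several of the right ingredients (the reduction to $k_0=1$, the persistence of the SDM under iteration via Theorem~\ref{thm:persist-lf}, the squeezing/localization technique, and the use of rationality to separate action levels), but it misses the key mechanism, and one of your stated inputs is not available in the generality of case~(i).

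The crux of the paper's argument is Theorem~\ref{thm:sdm}, which produces a nonvanishing class in degree $n+1$ (not~$n$) in the filtered Floer homology over a narrow action window $(kc+\delta_k,\,kc+\eps)$ \emph{strictly above} the SDM action $kc$, for all large~$k$. The SDM orbit $\bx^k$ itself lives at action exactly $kc$ and carries local Floer homology only in degree~$n$; the degree-$(n+1)$ class forced by the squeezing argument therefore \emph{cannot} be carried by $\bx^k$, nor (for $k$ a large prime, in the CY case) by any other iterated fixed point, since those either have nonzero mean index (so their local homology drifts out of degree $n+1$ by \eqref{eq:supp}) or have zero mean index and are supported in degrees $\leq n$ by Theorem~\ref{thm:persist-lf}. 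Your argument is instead built around the degree-$n$ class carried by $\bx^k$ itself and a heuristic that it ``survives from above but not from below,'' which is a different claim; it is not clear how that claim, even if established, forces a new simple orbit, since the survival of a local class in the global filtered homology is not determined by the local data alone.

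Separately, your appeal to $\HF_n(\varphi^k)\neq 0$ ``since $M$ is rational'' is not valid in case~(i): when $M$ is rational but not CY the Floer homology is only $\Z/2N$-graded, and there is no canonical degree-$n$ class. That is precisely why the paper reserves the degree-based argument for case~(ii) with $c_1|_{\pi_2}=0$, switches to an action-based argument for case~(ii) with $\omega|_{\pi_2}=0$, and treats case~(i) with a more involved bookkeeping argument in~\cite[Sect.\ 3]{GG:gaps}. In short, the missing idea is the degree-$(n+1)$ class of Theorem~\ref{thm:sdm} created just above the SDM action level; once that is in hand, the finiteness contradiction you are after falls out along the lines you sketch, but without it the argument does not close.
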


This theorem is in turn a consequence of the following result.

\begin{Theorem}[\cite{GG:gaps}]
\label{thm:sdm}
Assume that $(M^{2n},\omega)$ is closed and rational, and let $\bx$ be
an SDM of $H$. Set $c=\A_H(\bx)$.  Then for every sufficiently small
$\eps>0$ there exists $k_\eps$ such that
\begin{equation}
\label{eq:main}
\HF^{(kc+\delta_k,\,kc+\eps)}_{n+1}\big(H^{\nat k}\big)\neq 0\text{ for
  all $k>k_\eps$ and some $\delta_k$ with $0<\delta_k<\eps$.}
\end{equation}
\end{Theorem}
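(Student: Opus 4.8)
The plan is to reduce the statement to a purely local assertion about the action-filtered Floer homology near the orbit $\bx$ and its iterates, and then to run the Bangert--Hingston--type argument of \cite{Hi,GG:gaps}. The first step is to recall that, by the description of an SDM (Example \ref{ex:sdm}), after passing to the germ of $\varphi$ at $x$ we may model $H$ near $x$ by an autonomous $C^2$-small Hamiltonian $F$ with an isolated local maximum at $x$ whose Hessian is identically zero; here $\Delta(\bx)=0$ and $\HF_n(\bx)\cong\Z_2$ by the equivalences recalled in the Remark after Example \ref{ex:sdm}, so that $\HF_n(\bx^k)\neq 0$ for all admissible $k$ by Theorem \ref{thm:persist-lf}. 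The goal \eqref{eq:main} is a statement about degree $n+1$, one above the SDM degree, so I would first explain why producing a nonzero class in degree $n+1$ just above the action level $kc$ is exactly what is needed: combined with the fact that $\HF_n(\varphi^k)\neq 0$ is accounted for by the iterate $\bx^k$ itself sitting precisely at action level $kc$, a nonzero group in $\HF^{(kc+\delta_k,\,kc+\eps)}_{n+1}$ forces the existence of a $k$-periodic orbit other than $\bx^k$ with action in $(kc,\,kc+\eps)$; since $\eps$ can be taken arbitrarily small and the orbit $\bx^k$ has action exactly $kc$, these new orbits cannot all be iterates of $\bx$ (their actions differ from $kc\cdot(\text{anything})$), and a counting/pigeonhole argument over $k\to\infty$ then yields infinitely many simple orbits, which is how Theorem \ref{thm:sdm-conley} is deduced.

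The heart of the argument is the local model. I would take a sequence of functions $F_j$ obtained by ``pushing down'' the maximum: concretely, one builds a one-periodic Hamiltonian $H$ agreeing with the germ of $\varphi$ near $x$ such that on a fixed small ball $H_t$ is autonomous, equals the constant $c$ at $x$, is $\le c$ nearby, and near the boundary of the ball is a small radial bump. The key computation, carried out in \cite{GG:gaps} following \cite{Hi}, is that for the $k$-th iteration $H^{\nat k}$ the filtered Floer homology $\HF^{(kc+\delta,\,kc+\eps)}_*(H^{\nat k})$ of this \emph{local model} is nonzero in degree $n+1$ once $k$ is large enough relative to $1/\eps$: the maximum at $x$, being ``infinitely degenerate'', under iteration spawns a family of orbits slightly above level $c$ that behave homologically like the critical points of a function with a degenerate maximum on a ball, and the relevant relative homology $\H_*\big(\{F<c\}\cup\{x\},\{F<c\}\big)$ shifted appropriately contributes in degree $n+1$ after $k$-fold iteration. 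I would set this up carefully using the isomorphism $\HF_*\cong \HM_{*+n}$ of Example \ref{ex:lf} on the local model and the behavior of Morse homology of $kF$ versus $F$, tracking the action window so that exactly the ``new'' part survives; this is where the hypothesis that the Hessian vanishes is used decisively, since any nonzero eigenvalue would rotate the linearized flow and destroy the needed persistence in degree $n+1$.

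The remaining step is to pass from the local model to the actual Hamiltonian. Here one uses that the local Floer homology is invariant under deformations keeping $x$ uniformly isolated, together with a monotonicity/continuation argument in the action filtration: deforming $H$ to the local model through Hamiltonians that agree near $x$ and are controlled outside a small ball changes the filtered Floer homology only through orbits whose actions stay away from the window $(kc+\delta_k,\,kc+\eps)$, provided $\eps$ is small and $k$ is large; hence the nonzero class in degree $n+1$ persists for the original $H^{\nat k}$. Rationality of $M$ enters to ensure $\CS(H)$ is nowhere dense so the endpoints of the action window can be chosen off the spectrum and the filtered groups are well defined. The main obstacle, as in \cite{Hi,GG:gaps}, is precisely the local computation in the middle paragraph: showing that the degenerate maximum, which by itself only gives a class in degree $n$, produces \emph{under iteration} a genuinely new class one degree up, arbitrarily close above the iterated action level. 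This requires the careful ``layering'' of the sublevel sets of $kF$ and a quantitative estimate — in \cite{GG:gaps} this is done via an explicit chain-level model and a squeezing argument with cutoff Hamiltonians — and everything else (the reduction to the germ, the Künneth reduction to a constant orbit, the continuation argument, and the final counting) is comparatively routine given Theorem \ref{thm:persist-lf} and the properties \ref{(CZ1)}--\ref{(CZ2)}.
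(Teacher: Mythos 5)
Your proposal correctly identifies the preliminary reductions (passing to a constant orbit with trivial capping, invoking the geometric characterization of an SDM via a Hamiltonian with a strict local maximum and small Hessian, and the need to localize using rationality), but the central mechanism producing the degree-$(n+1)$ class is misdescribed in a way that does not close the argument. You suggest that the relative homology $\H_*\big(\{F<c\}\cup\{x\},\{F<c\}\big)$ of the SDM point, ``shifted appropriately,'' contributes in degree $n+1$ under iteration. It does not: since $x$ is totally degenerate, Theorem \ref{thm:persist-lf} gives $s_k=0$, so $\HF_*(\bx^k)$ stays concentrated in degree $n$ for every admissible $k$, and the iterates $x^k$ themselves never contribute in degree $n+1$. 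In the paper, the degree-$(n+1)$ class with action just above $kc$ is carried by \emph{non-constant} $1$-periodic orbits of the explicitly constructed autonomous squeezing Hamiltonians $H_\pm$, supported in the annulus between radii $r$ and $R$ about $x$. The real content is a squeezing argument: build $H_+\geq H\geq H_-$ with $H_\pm$ radially symmetric and constant on a shell around $x$, show that once $k(c-c')>\pi R^2$ (the origin of $k>k_\eps$, since $\pi R^2<\eps$) the monotone-homotopy map $\HF^{(kc+\delta,\,kc+\eps)}_{n+1}(kH_+)\to \HF^{(kc+\delta,\,kc+\eps)}_{n+1}(kH_-)$ is an isomorphism $\Z_2\to\Z_2$, and observe that this map factors through $\HF^{(kc+\delta,\,kc+\eps)}_{n+1}\big(H^{\nat k}\big)$, forcing the latter to be nonzero. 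Your sketch gestures at a squeezing argument in its closing sentence, but nowhere supplies a mechanism that actually places a class in degree $n+1$ in the correct action window; the appeal to $\HM_{*+n}(kF,x)$ versus $\HM_{*+n}(F,x)$ cannot do it, since those groups coincide.

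The passage from $\R^{2n}$ to the closed manifold $M$ is also not the continuation/invariance argument you describe. One cannot deform $H$ to a local model while keeping the filtered Floer groups in the fixed window $(kc+\delta,kc+\eps)$ unchanged: outside a small ball the Hamiltonians are genuinely different and new orbits with actions inside the window can and do appear. Instead the paper localizes via a shell argument: if $S$ is a shell on which $H_\pm$ are constant and $V$ is the contractible domain it bounds, then for any action interval of length $<\eps(S)$ (the minimal energy of a Floer cylinder crossing $S$) the sub-complex generated by orbits in $V$ with cappings in $V$ is a direct summand, and monotone continuation maps respect the splitting as long as the Hamiltonians stay constant on $S$. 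This is exactly where rationality of $M$ enters, and it is the step the paper flags as failing for irrational manifolds -- which is why \cite{He:irr} has to replace constancy on $S$ by a slight slope. A straightforward ``deform and cite invariance'' step would skip precisely the part of the proof that is genuinely delicate.
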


For instance, to prove case (ii) of Theorem \ref{thm:sdm-conley} when
$M$ is CY it suffices to observe that no $k$th iteration of a fixed
point can contribute to the Floer homology in degree $n+1$ for any
action interval when $k$ is sufficiently large prime and $\varphi$ has
finitely many fixed points.  When $\omega|_{\pi_2(M)}=0$, the argument
is similar, but now the action filtration is used in place of the
degree. The proof of case (i) is more involved; see \cite[Sect.\
3]{GG:gaps} where some more general results are also established.

\begin{proof}[Outline of the proof of Theorem \ref{thm:sdm}]
  Composing if necessary $\varphi^t_H$ with a loop of Hamiltonian
  diffeomorphisms, we can easily reduce the problem to the case where
  $\bx$ is a constant one-periodic orbit with trivial capping; see
  \cite[Prop.\ 2.9 and 2.10]{GG:gaps}. Henceforth, we write $x$ rather
  than $\bx$ and assume that $dH_t(x)=0$ for all $t$.

  The key to the proof is the following geometrical characterization
  of SDMs:

\begin{Lemma}[\cite{Gi:CC, Hi}]
\label{lemma:sdm}
Let $x$ be an isolated constant one-periodic orbit for a germ of a
time-dependent Hamiltonian flow $\varphi_H^t$. Assume that $x$ (with
constant trivialization) is an SDM. Then there exists a germ of a
time-dependent Hamiltonian flow $\varphi_K^t$ near $x$ such that the
two flows generate the same time-one map, i.e., $\varphi_K=\varphi_H$,
and $K_t$ has a strict local maximum at $x$ for every
$t$. Furthermore, one can ensure that the Hessian $d^2K_t(x)$ is
arbitrarily small. In other words, for every $\eta>0$ one can find
such a Hamiltonian $K_t$ with $\|d^2K_t(x)\|<\eta$.
\end{Lemma}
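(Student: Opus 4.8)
The goal is to realize an SDM germ $\varphi_H^t$ at a constant orbit $x$ by a new Hamiltonian flow $\varphi_K^t$ with the same time-one map such that each $K_t$ has a genuine strict local maximum at $x$, with Hessian as small as we wish. Let me think about what's going on here.

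First, "SDM" means $\HF_n(x) \neq 0$ and $\Delta(x) = 0$, hence (by the support property and the totally degenerate conclusion of Theorem \ref{thm:persist-lf}) $x$ is totally degenerate: the linearized flow $d\varphi_H^t|_x$ is a path in $\Sp(2n)$ with all eigenvalues of the endpoint equal to $1$, and $\Delta = 0$ means the "preferred eigenvalues" don't rotate on average.

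**Step 1: Reduce to a generating function / autonomous-type model near $x$.** Since $\Delta(\varphi_H|_x) = 0$ and the orbit is totally degenerate, the linearized time-one map $d\varphi_H|_x$ is unipotent, and more is true — the whole loop $t \mapsto d\varphi_H^t|_x$ is contractible in $\Sp(2n)$ rel endpoints to the constant path (because $\Delta = 0$ for a totally degenerate path forces this). So after composing $\varphi_H^t$ with a loop of Hamiltonian diffeomorphisms supported near $x$ (which doesn't change the time-one map and doesn't change the local Floer homology or mean index — the reduction already made in the outline), we may assume $d\varphi_H^t|_x = \mathrm{id}$ for all $t$, i.e. $x$ is a degenerate critical point "to first order for all $t$," $dH_t(x) = 0$ and $d^2H_t(x) = 0$.

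**Step 2: Use the generating function.** With $d\varphi_H|_x = \mathrm{id}$, the time-one map $\varphi_H$ near $x$ is the Hamiltonian diffeomorphism generated by a function $F$ on $M$ near $x$ (in the sense of generating functions for symplectomorphisms $C^1$-close to the identity), with $x$ an isolated critical point of $F$. By Example \ref{ex:lf} (the local Morse $\leftrightarrow$ local Floer isomorphism) the SDM condition $\HF_n(x) \neq 0$ translates into $\HM_{2n}(F,x) \neq 0$, i.e. the top-degree local Morse homology of $F$ at $x$ is nontrivial. By the Morse-theoretic computation $\HM_{2n}(F,x) \cong \H_{2n}(\{F<c\}\cup\{x\},\{F<c\})$ with $F(x)=c$; this being nonzero (in the top degree $2n$) forces $x$ to be a \emph{local maximum} of $F$ — indeed a strict isolated one, since $x$ is isolated. (This is the content of Lemma \ref{lemma:sdm}, cited to \cite{Gi:CC,Hi}.)

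**Step 3: Interpolate to a path of Hamiltonians each with a strict max.** Now I want to produce the \emph{flow} $\varphi_K^t$, not just the time-one map, with each $K_t$ having a strict local max at $x$. The natural construction: take $K_t$ to be a suitable time-reparametrization / interpolation whose time-one map is $\varphi_F = \varphi_H$ and such that the generating function at each intermediate time is a small negative-definite-ish bump — concretely, let $\varphi_K^t$ be the Hamiltonian isotopy generated by $F$ scaled appropriately, or rather the isotopy $s \mapsto$ (flow of $F$ for time $s$) reparametrized so that $K_t$ near $x$ "looks like" $F$ shrunk by a factor; since $F$ has a strict local max at $x$, each $K_t$ inherits a strict local max at $x$. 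The Hessian smallness is arranged by first, in Step 1, making $\|d^2 H_t(x)\|$ as small as we like (the loop we composed with can be chosen with arbitrarily small Hessian), so that the resulting $F$ has $\|d^2 F(x)\|$ small, hence each $K_t$ does too.

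**The main obstacle.** The genuinely delicate point is Step 3: passing from "the time-one map is generated by a function $F$ with a strict max at $x$" to "there is a whole \emph{time-dependent flow} realizing the same time-one map, each of whose generating Hamiltonians $K_t$ has a strict local max at $x$, with controllably small Hessian." One must be careful that the interpolation stays a legitimate germ of a Hamiltonian flow near $x$ (no escape of the isolating neighborhood), that it indeed has time-one map exactly $\varphi_H$ (not just up to a loop), and that the strict-maximum property is preserved at \emph{every} intermediate $t$, not just generically — this is where the quantitative Hessian bound is used, since a strict max with tiny Hessian is stable under the small perturbations involved in the interpolation. This step is exactly Lemma \ref{lemma:sdm}, proved in \cite{Gi:CC} following \cite{Hi}; below we recall its statement and refer to those papers for the construction, using it as a black box in the sequel.
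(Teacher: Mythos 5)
Your Step 2 — realizing $\varphi_H$ near $x$ by a generating function $F$, identifying $\HF_*(x)\cong\HM_{*+n}(F,x)$, and deducing that $\HM_{2n}(F,x)\neq 0$ forces $x$ to be a strict isolated local maximum of $F$ — is exactly the paper's central mechanism, and Step 3 (converting the generating function into a time-dependent Hamiltonian $K_t$ with the same time-one map) is also where the paper's proof is concentrated. However, two points in your write-up are genuine problems.

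The first is Step 1. You claim that by composing $\varphi_H^t$ with a loop of Hamiltonian diffeomorphisms supported near $x$ one may assume $d\varphi_H^t|_x=\mathrm{id}$ for all $t$, hence $d^2H_t(x)\equiv 0$. This is false in general. Composing with a loop $\psi^t$ with $\psi^0=\psi^1=\mathrm{id}$ does not change the time-one linearization: $d(\psi^1\circ\varphi_H)|_x = d\psi^1|_x\cdot d\varphi_H|_x = d\varphi_H|_x$. So if $d\varphi|_x$ is a nontrivial unipotent matrix (a Jordan block, say, which is the typical totally degenerate situation), the linearized path $t\mapsto d\varphi_H^t|_x$ cannot be made constant, and consequently $d^2K_t(x)$ cannot be made identically zero for any generating Hamiltonian $K_t$ of the germ. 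Your appeal to ``contract the loop $t\mapsto d\varphi_H^t|_x$ rel endpoints to the constant path because $\Delta=0$'' breaks down for the same reason: this path is not a loop unless $d\varphi|_x = I$. The paper's actual reduction is different and weaker — it applies a \emph{symplectic linear change of coordinates} (conjugation by a constant linear symplectomorphism, exploiting unipotency) to bring $d\varphi|_x$ \emph{arbitrarily close to}, but in general not equal to, the identity, and as a result the generating function has \emph{small} Hessian $d^2F(x)=O(\|d\varphi|_x - I\|)$. This is precisely why the lemma's conclusion is the quantitative bound $\|d^2K_t(x)\|<\eta$ rather than $d^2K_t(x)=0$, and why the paper explicitly treats the case $d^2H(x)=0$ only as a simplifying assumption (``This case, roughly speaking, corresponds to an SDM with $d\varphi|_x=I$'') and handles the general case separately. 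You also misattribute your reduction to the one made in the outline of Theorem~\ref{thm:sdm}: that reduction only achieves $dH_t(x)=0$ (constant orbit with trivial capping), not $d^2H_t(x)=0$.

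The second problem is Step 3. You correctly identify the passage from $F$ to a time-dependent Hamiltonian $K_t$ as the delicate technical point, but then close by saying ``this step is exactly Lemma~\ref{lemma:sdm} \dots using it as a black box in the sequel.'' Since Lemma~\ref{lemma:sdm} is precisely the statement you are asked to prove, invoking it as a black box here is circular. Your alternative suggestion (take the autonomous Hamiltonian flow of $F$) also does not literally work: the time-one map of the autonomous flow of $F$ is not the symplectomorphism generated by $F$ as a generating function, so some genuine interpolation argument — controlled to preserve both the time-one map exactly and the strict-maximum property at each intermediate time — is required, and this is where the paper refers to \cite{Gi:CC, Hi} for the construction.
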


\begin{Remark} Strictly speaking, contrary to what is stated in
  \cite[Prop.\ 5.2]{GG:gap} and \cite[Rmk.\ 5.9]{GG:gaps}, this lemma
  is not quite a characterization of SDMs in the sense that it is not
  clear if every $x$ for which such Hamiltonians $K_t$ exist is
  necessarily an SDM. However, in fact, $K_t$ can be taken to meet an
  additional requirement ensuring, in essence, that the $t$ dependence
  of $K_t$ is minor. With this condition, introduced in \cite[Lemma
  4]{Hi} as that $K$ is relatively autonomous (see also \cite[Sect.\ 5
  and 6]{Gi:CC}), the lemma gives a necessary and sufficient condition
  for an SDM.
\end{Remark}

\begin{proof}[Outline of the proof of Lemma \ref{lemma:sdm}]
  The proof of Lemma \ref{lemma:sdm} is rather technical, but the idea
  of the proof is quite simple. Set $\varphi=\varphi_H$. First, note
  that all eigenvalues of $d\varphi|_{x}\colon T_xM\to T_xM$ are equal
  to one since $x$ is totally degenerate. Thus by applying a
  symplectic linear change of coordinates we can bring $d\varphi|_{x}$
  arbitrarily close to the identity. Then $\varphi$ is also
  $C^1$-close to $\id$ near $x$. Let us identify $(M\times M,
  \omega\oplus (-\omega))$ near $(x,x)$ with a neighborhood of the
  zero section in $T^*M$, and hence the graph of $\varphi$ with the
  graph of $dF$ for a germ of a smooth function $F$ near $x$. The
  function $F$ is a generating function of $\varphi$. Clearly, $x$ is
  an isolated critical point of $F$ and $d^2F(x)=O(\| d\varphi|_{x} -
  I \|)$.

  Furthermore, similarly to Example \ref{ex:lf}, we have an
  isomorphism
$$
\HF_*(x)=\HM_{*+n}(F,x),
$$
and thus $\HM_{2n}(F,x)\neq 0$. It is not hard to show that an
isolated critical point $x$ of a function $F$ is a local maximum if
and only if $\HM_{2n}(F,x)\neq 0$. The generating function $F$ is not
quite a Hamiltonian generating $\varphi$, but it is not hard to turn
$F$ into such a Hamiltonian $K_t$ and check that $K_t$ inherits the
properties of $F$.
\end{proof}

Returning to the proof of Theorem \ref{thm:sdm}, we apply the lemma to
the SDM orbit $x$ and observe that the local loop
$\varphi_H^t\circ(\varphi_K^t)^{-1}$ has zero Maslov index and hence
is contractible. It is not hard to show that every local contractible
loop extends to a global contractible loop; see \cite[Lemma
2.8]{Gi:CC}. In other words, we can extend the Hamiltonian $K_t$ from
Lemma \ref{lemma:sdm} to a global Hamiltonian such that
$\varphi_K=\varphi$, not only near $x$ but on the entire manifold $M$.

With this in mind, let us reset the notation. Replacing $H$ by $K$ but
retaining the original notation, we can say that for every $\eta>0$
there exists a Hamiltonian $H$ such that
\begin{itemize}
\item $\varphi_H=\varphi$;
\item $x$ is a constant periodic orbit of $H$, and $H_t$ has an
  isolated local maximum at $x$ for all $t$;
 \item $\|d^2H_t(x)\|<\eta$ for all $t$. 
\end{itemize}
Furthermore, we can always assume that all such Hamiltonians $H$ are
related to each other and to the original Hamiltonian via global loops
with zero action and zero Maslov index. Thus, in particular,
$c=\CA_H(x)=H(x)$ is independent of the choice of $H$ above, and all
Hamiltonians have the same filtered Floer homology. Therefore, it is
sufficient to prove the theorem for any of these Hamiltonians $H$ with
arbitrarily small Hessian $d^2H(x)$.

To avoid technical difficulties and illuminate the idea of the proof,
let us asume that $d^2H(x)=0$ and, of course, that $H_t$ has, as
above, a strict local maximum at $x$ for all $t$. This case, roughly
speaking, corresponds to an SDM $x$ with $d\varphi|_x=I$.
 
To prove \eqref{eq:main} for a given $\eps>0$, we will use the
standard squeezing argument, i.e., we will bound $H$ from above and
below by two autonomous Hamiltonians $H_\pm$ as in Fig.\
\ref{fig:functions} and calculate the Floer homology of $kH_\pm$.

\begin{figure}[h!]
\begin{center} 
\def\svgwidth{0.7\columnwidth}
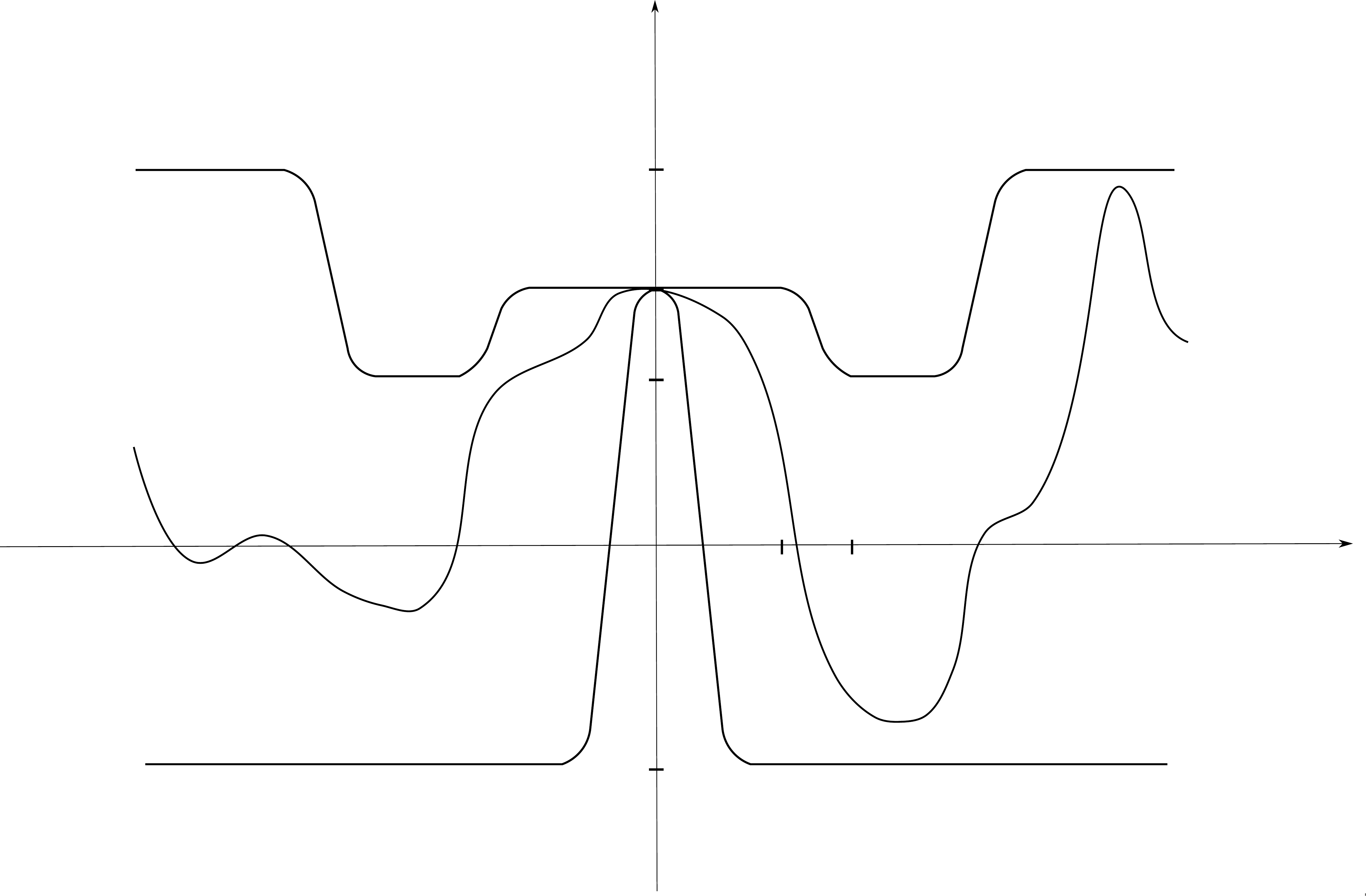
\caption{The functions $H_{\pm}$}
\label{fig:functions}
\end{center}
\end{figure}

In a Darboux neighborhood $U$ of $x$, the Hamiltonians $H_\pm$ are
rotationally symmetric. The Hamiltonian $H_+$ is constant and equal to
$c$ near $x$ on a ball of radius $r$ and then sharply decreases to
some $c'$ which is close to $c$ and attained on the sphere of radius
$R$.  Then, after staying constant on a spherical shell, $H_+$
increases to some value $c_+$, to accommodate $H$, and becomes
constant. The radii $r<R$ depend on $\eps$; namely, we require that
$\pi R^2<\eps$.  The Hamiltonian $H_-$ is a bump function decreasing
from its maximum $c$ at $x$ to a large negative value $c_-$. Thus we
have
$$
H_+\geq H\geq H_-.
$$
We require $H_-$ to have a strict maximum at $x$ with
$d^2H_-(x)=0$. Then the local Floer homology of $H_-^{\nat k}$ is
equal to $\Z_2$ and concentrated in degree $n$, i.e., $x$ is also an
SDM for $H_-$ and all its iterations.

Now, for any $a<b$ outside $\CS(H^{\nat k})$ and $\CS(H^{\nat
  k}_\pm)$, we have the maps
$$
\HF^{(a,\,b)}_{*}\big(H^{\nat k}_+\big)\to
\HF^{(a,\,b)}_{*}\big(H^{\nat k}\big) \to
\HF^{(a,\,b)}_{*}\big(H^{\nat k}_-\big)
$$
induced by monotone homotopies, where $\HF^{(a,\,b)}_{*}\big(H^{\nat
  k}_\pm\big)=\HF^{(a,\,b)}_{*}\big(kH_\pm\big)$ since $H_{\pm}$ are
autonomous Hamiltonians. Therefore, it is be sufficient to prove that
the map
\begin{equation}
\label{eq:iso-main}
 \HF^{(kc+\delta,\,kc+\eps)}_{n+1}\big(kH_+\big)
 \to \HF^{(kc+\delta,\,kc+\eps)}_{n+1}\big(kH_-\big)
\end{equation}
is non-zero for some $\delta$ in the range $(0,\,\eps)$.

To this end, let us assume first that $H_\pm$, as above, are functions
on $\R^{2n}$ constant outside a neighborhood of $x=0$. The filtered
Floer homology of $H_\pm$ is still defined for any interval $(a,\,b)$
not containing $c_\pm$. Moreover, a decreasing homotopy $H^s$ from
$H_+$ to $H_-$ through functions constant outside a compact set
induces a map in the Floer homology even when the value of $H^s$ at
infinity passes through $(a,\,b)$. Then we have an isomorphism
 \begin{equation}
\label{eq:isom}
\Z_2\cong \HF^{(kc+\delta,\,kc+\eps)}_{n+1}\big(kH_+\big)
\stackrel{\cong}{\longrightarrow} \HF^{(kc+\delta,\,kc+\eps)}_{n+1}\big(kH_-\big)\cong\Z_2
\end{equation}
when $k$ is sufficiently large and $\delta>0$ is sufficiently
small. Namely, $k$ is so large that $k(c-c')>\pi R^2$. This is the
origin of the requirement $k>k_\eps$. Then the homology of $kH_\pm$ is
generated by the periodic orbit closest to $x$, and $\delta$ is chosen
so that $kc+\delta$ is smaller than the action of $kH_-$ on this
orbit.

The isomorphism \eqref{eq:isom} is established by a straightforward
analysis of periodic orbits and easily follows from the calculation
carried out already in \cite{GG:capacity}. It is based on two facts:
that
$$
\Z_2\cong \HF^{(kc-\delta,\,kc+\delta)}_{n}\big(kH_+\big)
\stackrel{\cong}{\longrightarrow}
\HF^{(kc-\delta,\,kc+\delta)}_{n}\big(kH_-\big)\cong\Z_2
$$
is an isomorphism when $\delta>0$ is small and that
$\HF^{(a,\,b)}_{n+1}\big(kH_\pm\big)=0$ for every sufficiently large
interval $(a,\,b)$ containing $kc$ and contained in $(kc_-,\,kc_+)$.

It remains to transplant this calculation from $\R^{2n}$ to a closed
manifold $M$. The key to this is the fact that the action interval in
question is sufficiently small. This enables one to localize a
calculation of filtered Floer homology by essentially turning action
localization to spatial localization. A general framework for this
process, developed in \cite{GG:gaps}, is as follows. Let $S$ be a
shell in $M$, i.e., a region between two hypersurfaces and bounding a
contractible domain $V$ in $M$.  (To be more precise, $V$ is bounded
in $M$ by a connected component of $\p S$ and $S\cap V=\emptyset$. The
contractibility assumption can be significantly relaxed.) Furthermore,
let $F$ be a Hamiltonian which we require to be constant on the
shell. For any interval $I=(\alpha,\,\beta)$ not containing $F|_S$,
consider the subspace of the Floer complex generated by the orbits of
$F$ in $V$ with cappings also contained in $V$. If necessary, we
perturb $F$ in $V$ to make sure that the orbits with action in $I$ are
non-degenerate.  Then there exists a constant $\eps(S)>0$ such that,
when $|I|=\beta-\alpha<\eps(S)$, this subspace of the Floer complex is
actually a subcomplex and, moreover, a direct summand. (This is an
immediate consequence of the fact that a holomorphic curve crossing
$S$ must have energy bounded away from zero by some constant
$\eps(S)$.)  Furthermore, continuation maps respect this decomposition
as long as the Hamiltonians remain constant on $S$. (However, the
value of the Hamiltonians on $S$ can enter the interval $I$ during the
homotopy.) Let us denote the resulting Floer homology by
$\HF_*^{I}(F;V)$.

We apply this construction to $H_\pm$ with $S$ being the spherical
shell where $H_+=c'$ and $V$ being the ball of radius $R$ in $U$
enclosed by this shell. (Hence we also need $H_-$ to be constant
outside $V$.)  As a consequence, \eqref{eq:isom} turns into an
isomorphism
$$
\Z_2\cong \HF^{(kc+\delta,\,kc+\eps)}_{n+1}\big(kH_+; V\big)
\stackrel{\cong}{\longrightarrow}
\HF^{(kc+\delta,\,kc+\eps)}_{n+1}\big(kH_-; V\big)\cong\Z_2,
$$
entering the map \eqref{eq:iso-main} as a direct summand. Hence
\eqref{eq:iso-main} is also non-zero.

The general case where we only have $\|d^2H(x)\|<\eta$ is handled in a
similar way, but the construction of $H_\pm$ is considerably more
involved and the choice of the modified Hamiltonians $H$ requires more
attention; see \cite{Gi:CC,GG:gaps}. \end{proof}

Interestingly, no other proof of the Conley conjecture is known for
general symplectic manifolds. A more conceptual or just plain
different argument may shed new light on the nature of the phenomena
considered here and is likely to have other applications. (For the
torus, a different proof is given in the original work \cite{Hi} and
then in \cite{Ma:SDM}. However, it is not clear to us how to translate
that proof to symplectic topological language.)

\begin{Remark}
  The part of the proof that does not go through when $M$ is
  irrational is the last step, the localization. The difficulty is
  that the action spectrum is dense in this case, and necessarily some
  of the recappings of degenerate trivial orbits of $F$ in $S$ have
  actions in $I$. Thus it is not obvious how to define the Floer
  homology localized in $V$. This problem is circumvented in
  \cite{He:irr} by considering the Hamiltonians which have a slight
  slope in $S$ rather than being constant. With this modification, the
  localization procedure goes through, although the underlying reason
  for the localization is now different; see \cite{He:irr, Us:tgf}.
\end{Remark}

\section{Reeb flows}
\label{sec:reeb}
\subsection{General discussion}
\label{sec:reeb-general}
The collection of all closed symplectic manifolds breaks down into two
classes: those for which the Conley conjecture holds and those for
which the Conley conjecture fails. Of course, the non-trivial
assertion is then that, as we have seen, the former class is non-empty
and even quite large. The situation with closed contact manifolds is
more involved even if we leave aside such fundamental questions as the
Weinstein conjecture and furthermore focus exclusively on the contact
homological properties of the manifold.

First of all, there is a class of contact manifolds for which every
Reeb flow has infinitely many simple closed orbits because the rank of
the contact or symplectic homology grows as a function of the index or
of some other parameter related to the order of iteration. This
phenomenon, studied in \cite{HM, McL}, generalizes and is inspired by
the results in \cite{GM} establishing the existence of infinitely many
closed geodesics for manifolds whose free loop space homology grows.
(A technical but important fact closely related to Theorem
\ref{thm:persist-lf} and underpinning the proof is that the iterates
of a given orbit can make only bounded contributions to the homology;
see \cite{GG:gap, GM, HM, McL} for various incarnations of this
result.)  By \cite{VPS} and \cite{AS, SW, Vi:f}, among contact
manifolds in this class are the unit cotangent bundles $ST^*M$
whenever $\pi_1(M)=0$ and the algebra $\H^*(M;\Q)$ is not generated by
one element, and some others, \cite{HM, McL}. As is already pointed
out in Section \ref{sec:CC-history}, this homologically forced
existence of infinitely many Reeb orbits has very different nature
from the Hamiltonian Conley conjecture where there is no homological
growth.

Then there are contact manifolds admitting Reeb flows with finitely
many closed orbits. Among these are, of course, the standard contact
spheres and, more generally, the pre-quantization circle bundles over
symplectic manifolds admitting torus actions with isolated fixed
points; see \cite[Example 1.13]{Gu:pr}. Note that the class of such
pre-quantization circle bundles includes the Katok--Ziller flows,
i.e., Finsler metrics with finitely many closed geodesics on $S^n$ and
on some other manifolds; see \cite{Ka} for the original construction
and also \cite{Zi}. Another important group of examples also
containing the standard contact spheres arises from contact toric
manifolds; see \cite{AM}. These two classes (pre-quantization circle
bundles and contact toric manifolds) overlap, but do not entirely
coincide. Although this is not obvious, Reeb flows with finitely many
periodic orbits may have non-trivial dynamics, e.g., be ergodic; see
\cite{Ka}.

Finally, as is shown in \cite{GGM}, there is a \emph{non-empty} class
of contact manifolds for which every Reeb flow (meeting certain
natural index conditions) has infinitely many simple closed orbits,
although there is no obvious homological growth -- the rank of the
relevant contact homology remains bounded. One can expect this class
to be quite large, but at this point such unconditional existence of
infinitely many closed Reeb orbits has only been proved for the
pre-quantization circle bundles of certain aspherical manifolds; see
Theorem \ref{thm:CCC}. The proof of this theorem is quite similar to
its Hamiltonian counterpart.

This picture is, of course, oversimplified and not even close to
covering the entire range of possibilities, even on the homological
level. For instance, hypothetically, the Reeb flows for overtwisted
contact structures have infinitely many simple closed orbits, but
where should one place such contact structures in our
``classification''? (See \cite{El, Yau} for a proof of the existence
of one closed orbit in this case.)

One application of Theorem \ref{thm:CCC} is the existence of
infinitely many simple periodic orbits for all low energy levels of
twisted geodesic flows on surfaces of genus $g\geq 2$ with
non-vanishing magnetic field; see Section \ref{sec:magnetic}.

Just as in the Hamiltonian setting, the mean indices or the actions
and the mean indices of simple periodic orbits of Reeb flows must, in
many instances, satisfy certain resonance relations when the number of
closed orbits is finite. The mean index resonance relations for the
standard contact sphere were discovered by Viterbo in \cite{Vi:res},
and the Morse--Bott case for geodesic flows was considered in
\cite{Ra:ind}. Viterbo's resonance relations were generalized to
non-degenerate Reeb flows on a broad class of contact manifolds in
\cite{GK09}. These resonance relations resemble the equality between
two expressions for the Euler characteristic of a closed manifold: the
homological one and the one using indices of zeroes of a vector
field. The role of the homological expression is now taken by the mean
Euler characteristic of the contact homology of the manifold,
introduced in \cite{VK}, and the sum of the indices is replaced by the
sum of certain local invariants of simple closed orbits. The
degenerate case of the generalized Viterbo resonance relations was
studied in \cite{GGo, HM, LLW} and the Morse--Bott case in
\cite{Es}. There are also variants of resonance relations involving
both the actions and the mean indices; see \cite{Gu:pr} and also
\cite{Ek, EH}.

Leaving aside the exact form of the resonance relations, we only
mention here some of their applications. The first one, in dynamics,
is a contact analog of Theorem \ref{thm:generic}: the generic
existence of infinitely many simple 
closed orbits for a large class of Reeb flows; see \cite{GG:generic}
and also \cite{Ek, Ra:geod, Hi:eq} for related earlier
results. Another application, also in dynamics, is to the proof of the
existence of at least two simple closed Reeb orbits on the standard
contact $S^3$. This result is further discussed in the next section;
see Theorem \ref{thm:S3}. (We refer the reader to \cite{Gu:pr} for
some other applications in dynamics.)  Finally, on the topological
side, the resonance relations can be used to calculate the mean Euler
characteristic, \cite{Es}.

\subsection{Contact Conley conjecture}
\label{sec:CCC}
Consider a closed symplectic manifold $(M,\omega)$ such that the form
$\omega$ or, to be more precise, its cohomology class $[\omega]$ is
\emph{integral}, i.e., $[\omega]\in \H^2(M;\Z)/\Tor$. Let $\pi\colon
P\to M$ be an $S^1$-bundle over $M$ with the first Chern class
$-[\omega]$.  The bundle $P$ admits an $S^1$-invariant 1-form
$\alpha_0$ such that $d\alpha_0=\pi^*\omega$ and $\alpha_0(R_0)=1$,
where $R_0$ is the vector field generating the $S^1$-action on $P$. In
other words, when we set $S^1=\R/\Z$ and identify the Lie algebra of
$S^1$ with $\R$, the form $\alpha_0$ is a connection form on $P$ with
curvature $\omega$. (Note our sign convention.)

Clearly, $\alpha_0$ is a contact form with Reeb vector field $R_0$,
and the connection distribution $\xi=\ker\alpha_0$ is a contact
structure on $P$. Up to a gauge transformation, $\xi$ is independent
of the choice of $\alpha_0$. The circle bundle $P$ equipped with this
contact structure is usually referred to as a \emph{pre-quantization
  circle bundle} or a Boothby--Wang bundle.  Also, recall that a
degree two (real) cohomology class on $P$ is said to be
\emph{atoroidal} if its integral over any smooth map $\T^2\to P$ is
zero. (Such a class is necessarily aspherical.)  Finally, in what
follows, we will denote by $\ff$ the free homotopy class of the fiber
of $\pi$.

The main tool used in the proof of Theorem \ref{thm:CCC} stated below
is the cylindrical contact homology. As is well known, to have this
homology defined for a contact form $\alpha$ on any closed contact
manifold $P$ one has to impose certain additional requirements on the
closed Reeb orbits of $\alpha$. (See \cite{Bo, SFT} and references
therein for the definition and a detailed discussion of contact
homology.) Namely, following \cite{GGM}, we say that a non-degenerate
contact form $\alpha$ is \emph{index--admissible} if its Reeb flow has
no contractible closed orbits with Conley--Zehnder index $2-n$ or
$2-n\pm 1$, where $\dim P=2n+1$. In general, $\alpha$ or its Reeb flow
is index--admissible when there exists a sequence of non-degenerate
index--admissible forms $C^1$-converging to~$\alpha$.

This requirement is usually satisfied when $(P,\alpha)$ has some
geometrical convexity properties. For instance, the Reeb flow on a
strictly convex hypersurface in $\R^{2m}$ is index--admissible,
\cite{HWZ:convex}. Likewise, as is observed in \cite{Be}, the twisted
geodesic flow on a low energy level for a symplectic magnetic field on
a surface of genus $g\geq 2$ is index--admissible; see Section
\ref{sec:magnetic} for more details. Finally, let us call a closed
Reeb orbit non-degenerate (or weakly non-degenerate, SDM, etc.)  if
its Poincar\'e return map is non-degenerate (or, respectively, weakly
non-degenerate, SDM, etc.), cf.\ Section \ref{sec:conventions}.

\begin{Theorem}[Contact Conley Conjecture, \cite{GGM}]
\label{thm:CCC}
Assume that 
\begin{itemize}
\item[(i)] $M$ is aspherical, i.e., $\pi_r(M)=0$ for all $r\geq 2$,
  and
\item[(ii)] $c_1(\xi)\in \H^2(P;\R)$ is atoroidal.
\end{itemize}
Let $\alpha$ be an index--admissible contact form on the
pre-quantization bundle $P$ over $M$, supporting
$\xi$. Then the Reeb flow of $\alpha$ has infinitely many simple
closed orbits with contractible projections to $M$. Assume furthermore
that the Reeb flow has finitely many periodic orbits in the free
homotopy class $\ff$ of the fiber and that these orbits are weakly
non-degenerate. Then for every sufficiently large prime $k$ the Reeb
flow of $\alpha$ has a simple closed orbit in the class $\ff^k$, and
all classes $\ff^k$ are distinct.
\end{Theorem}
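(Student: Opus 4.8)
The plan is to run the contact analog of the proof of the Hamiltonian case sketched in Section~\ref{sec:pf}, with the cylindrical contact homology $\HC_*^{\ff^k}(\alpha)$ of $\alpha$ in the free homotopy class $\ff^k$ of the $k$-times iterated fiber playing the role of the filtered Floer homology of $\varphi^k$. The hypotheses on $M$ and $\xi$ are precisely what makes this work. Since $M$ is closed and aspherical, $\pi_1(M)$ is torsion-free, and the homotopy exact sequence of $S^1\to P\to M$ (with $\pi_2(M)=0$) shows that the fiber class generates a \emph{central} copy of $\Z$ in $\pi_1(P)$; hence the classes $\ff^k$ are pairwise distinct, all project to $1\in\pi_1(M)$, and for $k$ prime every closed Reeb orbit in $\ff^k$ is either simple or the $k$-th iterate of a simple orbit in $\ff$. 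The atoroidality of $c_1(\xi)$ makes the Conley--Zehnder and mean indices of orbits in the classes $\ff^k$, and hence the $\Z$-grading on $\HC_*^{\ff^k}$, well defined, in the way the condition $c_1(TM)|_{\pi_2(M)}=0$ does in the (CY) case; asphericity of $M$ keeps these classes of atoroidal type, so that the holomorphic cylinder counts defining cylindrical contact homology are not obstructed by bubbling.

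The first step is to establish the contact counterpart of the fact that $\HF_n(\varphi^k)\neq 0$ for all $k$: there is a degree $d_k$, namely the top of the support window of an orbit of mean index $k\Delta(\gamma_0)$ (where $\gamma_0$ is the simple fiber), so $d_k=k\Delta(\gamma_0)+n$, with $\HC_{d_k}^{\ff^k}(\alpha)\neq 0$ for every $k$. On the Boothby--Wang model $(P,\alpha_0)$ this follows from the Morse--Bott computation identifying $\HC_*^{\ff^k}(\alpha_0)$ with $\H_{*-s_k}(M;\Q)$, in which the fundamental class $[M]\in\H_{2n}(M)$ is the generator at the very top of the window --- the contact incarnation of a symplectically degenerate maximum. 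Since $\alpha$ is index--admissible and supports $\xi$, a continuation/invariance argument identifies $\HC_*^{\ff^k}(\alpha)$ with $\HC_*^{\ff^k}(\alpha_0)$; index--admissibility is exactly what guarantees that the ``bad'' orbits (those of Conley--Zehnder index $2-n$ or $2-n\pm1$) do not enter the complex, for $\alpha$ and along the deformation. I expect this step, i.e.\ setting up cylindrical contact homology with the necessary transversality and proving its invariance, to be the main obstacle; this is where the analytic work of \cite{GGM} is concentrated.

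Granting the non-vanishing, I would run the dichotomy exactly as in Section~\ref{sec:pf}. Suppose first that some class $\ff^{k_0}$ contains a closed Reeb orbit that is a symplectically degenerate maximum. Then the contact analog of Theorems~\ref{thm:sdm-conley} and~\ref{thm:sdm}, proved in \cite{GGM} by transplanting the squeezing/localization argument of Theorem~\ref{thm:sdm} to the symplectization (the spherical shell $S$ of Figure~\ref{fig:functions} now localizing the holomorphic-curve count), yields infinitely many simple closed orbits, all with contractible projections to $M$ since they lie in iterates of $\ff$. This case cannot occur under the extra hypothesis of the second assertion, an SDM being totally degenerate and hence not weakly non-degenerate.

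Suppose now that no orbit of any class $\ff^k$ is an SDM, and assume for contradiction that there are only finitely many simple closed orbits with contractible projection to $M$ --- in particular only finitely many orbits $\beta_1,\dots,\beta_m$ in $\ff$. Fix a prime $k$ larger than all orders of roots of unity among the eigenvalues of the return maps $d\beta_i$; then $k$ is an admissible iteration for each $\beta_i$, and each $\beta_i^k$ is weakly non-degenerate (no eigenvalue of $d\beta_i$ distinct from $1$ can become a primitive $k$-th root of unity). By the contact version of the support property~\eqref{eq:supp} together with Theorem~\ref{thm:persist-lf}, the local contact homology of $\beta_i^k$ is supported in the \emph{open} interval with endpoints $k\Delta(\beta_i)\mp n$, and therefore vanishes in degree $d_k=k\Delta(\gamma_0)+n$: either $d_k$ lies outside that window ($\Delta(\beta_i)\neq\Delta(\gamma_0)$), or $d_k$ is its right endpoint while the window is open ($\Delta(\beta_i)=\Delta(\gamma_0)$); and if $\beta_i$ is totally degenerate, so that $\Delta(\beta_i)=0$ by Theorem~\ref{thm:persist-lf}, the same conclusion follows from the non-SDM hypothesis exactly as in the second case of Proposition~\ref{prop:CC-nd}. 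Since for $k$ prime the only non-simple orbits in $\ff^k$ are the $\beta_i^k$, if $\ff^k$ contained no simple orbit then every orbit in $\ff^k$ would have local contact homology vanishing in degree $d_k$, forcing $\HC_{d_k}^{\ff^k}(\alpha)=0$ and contradicting the non-vanishing of the second step. Hence for every sufficiently large prime $k$ the class $\ff^k$ carries a simple orbit; these orbits are distinct and have contractible projections, contradicting finiteness. This proves the ``furthermore'' assertion (where the absence of SDMs is automatic) and, together with the SDM case, the existence of infinitely many simple closed orbits with contractible projections to $M$.
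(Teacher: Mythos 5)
Your proposal reproduces, in essentially the same form and with the same division into cases, the route followed in the paper (and in \cite{GGM}): the key observation that asphericity of $M$ gives an $\N$-grading of cylindrical contact homology by the distinct classes $\ff^k$ replacing the iteration order, a non-vanishing input coming from the Morse--Bott computation on the pre-quantization bundle, a non-SDM argument parallel to Proposition~\ref{prop:CC-nd}, and the SDM case disposed of by Theorem~\ref{thm:contact-sdm} with the pre-quantization disk bundle as the filling. The only point worth flagging is that the contact SDM theorem in the paper is proved using \emph{linearized} contact homology relative to the filling $W$ (not the cylindrical theory in the symplectization as your parenthetical suggests), and the existence of $W$ is essential there; but since you delegate that step to \cite{GGM}, this is a presentational slip rather than a gap.
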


It follows from Theorem \ref{thm:CCC} and the discussion below that,
when the Reeb flow of $\alpha$ is weakly non-degenerate, the number of
simple periodic orbits of the Reeb flow of $\alpha$ with period (or
equivalently action) less than $a>0$ is bounded from below by $C_0
\cdot a/\ln a - C_1$, where $C_0=\inf \alpha(R_0)$ and $C_1$ depends
only on $\alpha$. As mentioned in Section \ref{sec:CC-results}, this
is a typical growth lower bound in the Conley conjecture type results.
Note also that the weak non-degeneracy requirement here plays a
technical role and probably can be eliminated.

The key to the proof of Theorem \ref{thm:CCC} is the observation that,
as a consequence of (i), all free homotopy classes $\ff^k$, $k\in\N$,
are distinct and hence give rise to an $\N$-grading of the cylindrical
contact homology of $(P,\alpha)$. (In fact, it would be sufficient to
assume that $[\omega]$ is aspherical and $\pi_1(M)$ is torsion free;
both of these requirements follow from (i).) This grading plays
essentially the same role as the order of iteration in the Hamiltonian
Conley conjecture. With this observation in mind, the proof of the
weakly non-degenerate case is quite similar to its Hamiltonian
counterpart. (Condition (ii) is purely technical and most likely can
be dropped.)

To complete the proof, one then has to deal with the case where the
Reeb flow of $\alpha$ has a simple SDM orbit, i.e., a simple isolated
orbit with an SDM Poincar\'e return map. This is also done similarly
to the Hamiltonian case, but there are some nuances.

Consider a closed contact manifold $(P^{2n+1},\ker\alpha)$ with a
strong symplectic filling $(W,\omega)$, i.e., $W$ is a compact
symplectic manifold such that $P=\p W$ with $\omega|_P=d\alpha$ and a
natural orientation compatibility condition is satisfied. Let $\fc$ be
a free homotopy class of loops in $W$.

\begin{Theorem}[\cite{GHHM,GGM}]
\label{thm:contact-sdm}
Assume that the Reeb flow of $\alpha$ has a simple closed SDM orbit in
the class $\fc$ and one of the following requirements is met:
\begin{itemize}
\item $W$ is symplectically aspherical and $\fc=1$, or
\item $\omega$ is exact and $c_1(TW)=0$ in $H^2(W;\Z)$.
\end{itemize}
Then the Reeb flow of $\alpha$ has infinitely many simple periodic
orbits.
\end{Theorem}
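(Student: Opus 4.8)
The plan is to reduce the theorem to a filtered homological computation that mirrors, in the contact setting, the proof of Theorem~\ref{thm:sdm}, and then to show that iterated simple orbits cannot account for the resulting homology class. Since $(W,\omega)$ is a strong filling of $(P,\ker\alpha)$, I would pass to the symplectic completion $\hat W=W\cup_P\big(P\times[1,\infty)\big)$ and work with the (positive) symplectic homology $\SH_*(W)$, which is the natural substitute here for cylindrical contact homology, the latter not being available since index--admissibility is not assumed. Closed Reeb orbits of $\alpha$ of period $T$ appear as one-periodic orbits of autonomous Hamiltonians $H=h(r)$ on $\hat W$ that depend only on the symplectization coordinate $r$ near infinity, located at the radius $r_0$ where $h'(r_0)=T$; letting the slope of $h$ grow recovers more and more Reeb orbits, and in the limit all of $\SH_*(W)$. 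Hypothesis (a) ($\fc=1$ and $W$ symplectically aspherical) makes the action filtration on $\SH_*(W)$ well defined and its spectrum closed; hypothesis (b) ($\omega$ exact and $c_1(TW)=0$) makes the action well defined and provides a genuine $\Z$-grading. In either case, $\SH_*(W)$ carries the data — an action filtration or a grading — that records where $\gamma$ and its iterates $\gamma^k$ sit.

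First I would establish the Reeb analog of the geometric characterization of SDMs, Lemma~\ref{lemma:sdm}: near the simple SDM orbit $\gamma$, the Hamiltonian picture on $\hat W$ above can be arranged so that $\gamma$ becomes a strict local maximum of $H_t$ with arbitrarily small Hessian. With this in hand, the argument of Theorem~\ref{thm:sdm} goes through: squeeze $H$ between two rotationally symmetric autonomous Hamiltonians $H_\pm$ as in Figure~\ref{fig:functions}, compute the symplectic homology of the squeezing Hamiltonians from the orbit nearest $\gamma$, and invoke the shell-localization technique of \cite{GG:gaps} — a holomorphic curve crossing a fixed shell has energy bounded away from zero, so a sufficiently narrow action window localizes the computation to a neighborhood of $\gamma$. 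This should yield the contact counterpart of \eqref{eq:main}: writing $c$ for the action (period) of $\gamma$, for every sufficiently small $\eps>0$ there is $k_\eps$ so that $\SH_{n+1}^{(kc+\delta_k,\,kc+\eps)}(W)\neq 0$ for all primes $k>k_\eps$ and some $\delta_k\in(0,\eps)$.

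To conclude, suppose the Reeb flow of $\alpha$ had only finitely many simple closed orbits. Then the periods of the simple orbits are bounded, so every closed Reeb orbit with action close to $kc$ is a high iterate $z^m$ of one of these simple orbits, with $m\to\infty$ as $k\to\infty$. By the support bound for local Floer/contact homology — the analog of \eqref{eq:supp}, refined by the persistence in Theorem~\ref{thm:persist-lf} — the local homology of such a $z^m$ vanishes in degree $n+1$ once $m$ is large, whether or not $z$ is itself an SDM. Since the filtered homology $\SH_{n+1}^{(kc+\delta_k,\,kc+\eps)}(W)$ is assembled, via the action filtration, from the local homologies of the orbits lying in that window, it must vanish for $k$ large, contradicting the previous paragraph. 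Hence the Reeb flow of $\alpha$ has infinitely many simple closed orbits.

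The step I expect to be the main obstacle is the localization inside the filling. As in the irrational case of the Hamiltonian Conley conjecture (cf.\ the remark following the proof of Theorem~\ref{thm:sdm}), when the action spectrum of $\alpha$ is dense one cannot naively localize symplectic homology near $\gamma$, because orbits at many radii of the symplectization end have actions in the narrow window; this is exactly why a hypothesis of the form (a) or (b) is needed — possibly supplemented, in the spirit of \cite{He:irr}, by giving the localizing shell a slight slope. Matching the two cases to the appropriate version of the argument and carrying the Reeb SDM lemma through the passage between the contact and Hamiltonian pictures is where the real work lies; the complete proofs are in \cite{GHHM, GGM}.
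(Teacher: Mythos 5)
Your overall architecture matches the paper's proof of Theorem~\ref{thm:contact-sdm}: prove a Reeb analog of Lemma~\ref{lemma:sdm}, squeeze between Hamiltonians $H_\pm$ near the SDM orbit, localize the filtered homology to a shell around $\gamma$ following \cite{GG:gaps}, and then derive a contradiction from finiteness. Your observation that $\SH_*$ of the filling is the right substitute when index-admissibility fails is also on target; the paper uses filtered \emph{linearized} contact homology $\HC_*^{(a,b)}(\alpha;W,\fc^k)$ but later remarks that equivariant symplectic homology of $W$ is an alternative.

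However, there are two substantive issues. First, the degree and class bookkeeping. In the Hamiltonian case one can reduce to a constant SDM orbit with trivial capping so that $\Delta(\bx)=0$, and the target degree is genuinely $n+1$. A closed Reeb orbit cannot be made constant, so the mean index $\Delta$ of $\gamma$ with respect to the filling-induced trivialization is some fixed number with no reason to vanish, and the iterate $\gamma^k$ has mean index $k\Delta$. The paper's estimate \eqref{eq:CH-sdm} accordingly targets degree $k\Delta+n+1$ and simultaneously the free homotopy class $\fc^k$; both data depend on $k$. Your formula $\SH_{n+1}^{(kc+\delta_k,\,kc+\eps)}(W)\neq 0$ drops both the $k\Delta$ shift and the class grading. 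Relatedly, you use the \emph{non}-equivariant positive $\SH_*$, where each simple Reeb orbit produces a Morse--Bott circle contributing two generators, so the local computation near $\gamma^k$ differs from the one-generator-per-orbit picture of linearized contact homology; this is fixable but must be handled.

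Second, the final step. You argue that for $k$ large every orbit with action near $kc$ is a high iterate $z^m$ whose local homology vanishes in degree $n+1$, ``whether or not $z$ is itself an SDM.'' This is not correct as stated: the support of the local homology of $z^m$ is centered at $m\Delta_z$ (mean index in the filling trivialization), so whether it misses degree $k\Delta+n+1$ depends on $m\Delta_z$ versus $k\Delta$, and one also must constrain $m\A(z)$ to lie in the narrow action window and $z^m$ to lie in the class $\fc^k$. The paper explicitly warns that the passage from \eqref{eq:CH-sdm} to the conclusion is ``less obvious than its Hamiltonian counterpart for, say, symplectic CY manifolds''; your sketch reproduces only the easy Hamiltonian case (analogous to Theorem~\ref{thm:sdm-conley}(ii)) and does not close the argument in the contact setting. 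The actual derivation in \cite{GGM} is where the remaining work lies, and your outline leaves this gap open.
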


This result is a contact analog of Theorem \ref{thm:sdm-conley}.
Theorem \ref{thm:CCC} readily follows from the first case of Theorem
\ref{thm:contact-sdm} where we take the pre-quantization disk bundle
over $M$ as $W$. (Here we only point out that $\pi_2(W)=\pi_2(M)=0$
since $M$ is aspherical and refer the reader to \cite{GGM} for more
details.)

The proof of Theorem \ref{thm:contact-sdm} uses the filtered
linearized contact homology. To be more specific, denote by
$\HC_{*}^{(a,\,b)}(\alpha;W,\fc^k)$ the linearized contact homology of
$(P,\alpha)$ with respect to the filling $(W,\omega)$ for the action
interval $(a,\, b)$ and the free homotopy class $\fc^k$, graded by the
Conley--Zehnder index.  Set $\Delta=\Delta(x)$ and $c=\A(x)$ where $x$
is the SDM orbit from the theorem.  Similarly to the Hamiltonian case
(cf.\ Theorem \ref{thm:sdm}), one first shows that, under the
hypotheses of the theorem, for any $\eps>0$ there exists $k_\eps\in
\N$ such that
\begin{equation}
\label{eq:CH-sdm}
\HC_{k\Delta+n+1}^{(kc+\delta_k,\,kc+\eps)}(\alpha;W,\fc^k)\neq 0\textrm{ for
  all } k>k_\eps \textrm{ and some } \delta_k<\eps.
\end{equation}
Theorem \ref{thm:contact-sdm} is a consequence of \eqref{eq:CH-sdm}
(see \cite{GGM}), although the argument is less obvious than its
Hamiltonian counterpart for, say, symplectic CY manifolds.

The proof of \eqref{eq:CH-sdm} given in \cite{GHHM} follows the same
path as the proof Theorem \ref{thm:sdm}. Namely, we squeeze the form
$\alpha$ between two contact forms $\alpha_\pm$ constructed using the
Hamiltonians $H_\pm$ near the SDM orbit, calculate the relevant
contact homology for $\alpha_\pm$ (or rather a direct summand in it),
and show that the map in the contact homology induced by the cobordism
from $\alpha_+$ to $\alpha_-$ is non-zero. This map factors through
$\HC_{k\Delta+n+1}^{(kc+\delta_k,\,kc+\eps)}(\alpha;W,\fc^k)$, and
hence this group is also non-trivial.

Note that Theorem \ref{thm:contact-sdm} as stated, without further
assumptions on $\fc$, affords no control on the free homotopy classes
of the simple orbits or their growth rate.  A related point is that,
as of this writing, there seems to be no satisfactory version of
Theorem \ref{thm:contact-sdm} which would not rely on the existence of
the filling $W$. The difficulty is that without a filling one is
forced to work with cylindrical contact homology to prove a variant of
\eqref{eq:CH-sdm}, but then it is not clear if the forms $\alpha_\pm$
can be made index--admissible without additional assumptions on
$\alpha$ along the lines of index--positivity. Such a filling--free
version of the theorem would, for instance, enable one to eliminate
the weak non-degeneracy assumption in the growth assertion in Theorem
\ref{thm:CCC}. Another serious limitation of Theorem
\ref{thm:contact-sdm} is that the SDM orbit is required to be
simple. This condition, which is quite restrictive but probably purely
technical, is used in the proof in a crucial way to construct the
forms $\alpha_\pm$.

Another application of Theorem \ref{thm:contact-sdm} considered in
\cite{GHHM} (and also in \cite{GGo, LL}) is the following result
originally proved in \cite{CGH}.

\begin{Theorem}
\label{thm:S3}
The Reeb flow of a contact form $\alpha$ supporting the standard
contact structure on $S^3$ has at least two simple closed orbits.
\end{Theorem}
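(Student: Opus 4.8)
The plan is to argue by contradiction: assume the Reeb flow of $\alpha$ has at most one simple closed orbit, and derive a contradiction with the aid of Theorem~\ref{thm:contact-sdm}. That there is at least one closed Reeb orbit is Hofer's theorem on the Weinstein conjecture for the tight three-sphere (a case later subsumed by Taubes' general result in dimension three), so the configuration to exclude is that the flow has exactly one simple closed orbit $\gamma$, with every other closed orbit an iterate $\gamma^k$.

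The geometric setup is that the standard contact $S^3$ is the boundary of the ball $B^4$ with its standard Liouville form: a symplectically aspherical (indeed exact, with $c_1(TB^4)=0$) filling in which $\pi_1(B^4)=0$, so the only free homotopy class of loops is the trivial one. Consequently the filtered linearized contact homology $\HC_*^{(a,\,b)}(\alpha;B^4,1)$ is defined, and its direct limit over the action is isomorphic to the positive $S^1$-equivariant symplectic homology of $B^4$; it is one-dimensional in the degrees of an arithmetic progression of common difference two tending to $+\infty$, vanishes in all sufficiently negative degrees, and does not depend on the choice of $\alpha$ supporting the standard contact structure. Under the standing assumption this homology is assembled, by the contact analog of Theorem~\ref{thm:persist-lf} (the bounded contributions of the iterates of a single orbit, cf.~\cite{GGM}), entirely from the iterate complexes of $\gamma$.

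The core of the argument is then to show that $\gamma$ must be a (necessarily simple) symplectically degenerate maximum. Comparing the iteration estimate $\MUCZ(\gamma^k)=k\Delta(\gamma)+O(1)$ with the action filtration --- the differential strictly lowers the action, so any counted cylinder runs from an iterate $\gamma^j$ to an iterate $\gamma^i$ with $j>i$ and $\MUCZ(\gamma^j)=\MUCZ(\gamma^i)+1$ --- against the shape of the contact homology of $S^3$ recalled above rules out, successively: the hyperbolic case and the non-degenerate elliptic case (their iterate gradings populate the wrong set of degrees, or, equivalently, violate the mean Euler characteristic / Viterbo-type resonance relation for the standard sphere, cf.~\cite{Vi:res, GK09}); and every partially or totally degenerate configuration in which $\gamma$ is not an SDM (its iterates then contribute in a band of degrees too narrow, relative to the drift $k\Delta(\gamma)$, to reproduce a homology non-vanishing in arbitrarily high degrees). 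What remains is that $\gamma$ is an SDM. Theorem~\ref{thm:contact-sdm}, applied with $W=B^4$ (symplectically aspherical) and the trivial free homotopy class, now produces infinitely many simple closed Reeb orbits of $\alpha$ --- contradicting the assumption. Hence the Reeb flow has at least two simple closed orbits.

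The main obstacle is the middle step: turning ``exactly one simple orbit'' into ``that orbit is a simple SDM.'' This is a genuinely global argument --- the contact counterpart of the passage, in the Conley conjecture, from the absence of new high-period orbits to the existence of an SDM --- and it must simultaneously dispose of the non-degenerate elliptic case, which \emph{does} occur when there are two orbits (irrational ellipsoids), and of every partially degenerate case, using the precise grading and action structure of the contact homology of the standard sphere. This is carried out in \cite{GHHM} (see also \cite{GGo, LL}); \cite{CGH} reaches the same conclusion by the different route of embedded contact homology and the volume asymptotics of ECH spectral invariants.
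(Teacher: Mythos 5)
Your proposal reproduces exactly the argument the survey outlines for this theorem: assume a single simple closed orbit, show via index/resonance analysis (the contact analog of Theorem~\ref{thm:persist-lf}, Viterbo-type resonance relations, and the structure of the contact homology of the standard sphere) that this orbit must be an SDM, and then invoke Theorem~\ref{thm:contact-sdm} with the filling $W=B^4$ to produce infinitely many simple orbits, a contradiction. You also correctly flag the SDM-identification step as the genuine difficulty and cite the same sources (\cite{GHHM, GGo, LL}, with \cite{CGH} as the alternative ECH route), so this is the paper's proof in full.
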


In fact, a much stronger result holds. Namely, every Reeb flow on a
closed three-manifold has at least two simple closed Reeb orbits. This
fact is proved in \cite{CGH} using the machinery of embedded contact
homology and is outside the scope of this survey. The idea of the
proof from \cite{GHHM} is that if a Reeb flow on the standard contact
$S^3$ had only one simple closed orbit $x$, this orbit would be an
SDM, and, by Theorem \ref{thm:contact-sdm}, the flow would have
infinitely many periodic orbits. Showing that $x$ is indeed an SDM
requires a rather straightforward index analysis with one non-trivial
ingredient used to rule out a certain index pattern. In \cite{GHHM},
this ingredient is strictly three-dimensional and comes from the
theory of finite energy foliations (see \cite{HWZ1, HWZ2}). The
argument in \cite{GGo, LL} uses a variant of the resonance relation
for degenerate Reeb flows proved in \cite{GGo, LLW}. Theorem
\ref{thm:contact-sdm} can also be applied to give a simple proof,
based on the same idea, of the result from \cite{BL} that any Finsler
geodesic flow on $S^2$ has at least two closed geodesics; see
\cite{GGo}. (Of course, this fact also immediately follows from
\cite{CGH}.)

Interestingly, no multiplicity results along the lines of Theorem
\ref{thm:S3} have been proved in higher dimensions without restrictive
additional assumptions on the contact form. Conjecturally, every Reeb
flow on the standard contact sphere $S^{2n-1}$ has at least $n$ simple
closed Reeb orbits. This conjecture has been proved when the contact
form comes from a strictly convex hypersurface in $\R^{2n}$ and the
flow is non-degenerate or $2n\leq 8$; see \cite{LZ, Lo, Wa} and
references therein. In the degenerate strictly convex case, the lower
bound is $\lfloor n/2 \rfloor +1$. Without any form of a convexity
assumption, it is not even known if a Reeb flow on the standard
contact $S^5$ must have at least two simple closed orbits. It is easy
to see, however, that a non-degenerate Reeb flow on the standard
$S^{2n-1}$ has at least two simple closed orbits; see, e.g.,
\cite[Rmk.~3.3]{Gu:pr}.

We conclude this section by pointing out that the machinery of contact
homology which the proof of Theorem \ref{thm:CCC} relies on is yet to
be fully put on a rigorous basis.

\section{Twisted geodesic flows}
\label{sec:magnetic}
The results from Section \ref{sec:reeb} have immediate applications to
the dynamics of twisted geodesic flows. These flows give a Hamiltonian
description of the motion of a charge in a magnetic field on a
Riemannian manifold.

To be more precise, consider a closed Riemannian manifold $M$ and let
$\sigma$ be a closed 2-form (a \emph{magnetic field}) on $M$. Let us
equip $T^*M$ with the twisted symplectic structure
$\omega=\omega_0+\pi^*\sigma$, where $\omega_0$ is the standard
symplectic form on $T^*M$ and $\pi\colon T^*M\to M$ is the natural
projection, and let $K$ be the standard kinetic energy Hamiltonian on
$T^*M$ arising from the Riemannian metric on $M$.  The Hamiltonian
flow of $K$ on $T^*M$ governs the motion of a charge on $M$ in the
magnetic field $\sigma$ and is referred to as a \emph{twisted
  geodesic} or \emph{magnetic flow}. In contrast with the geodesic
flow (the case $\sigma=0$), the dynamics of a twisted geodesic flow on
an energy level depends on the level. In particular, when $M$ is a
surface of genus $g \geq 2$, the example of the horocycle flow shows
that a symplectic magnetic flow need not have periodic orbits on all
energy levels. Furthermore, the dynamics of a twisted geodesic flow
also crucially depends on whether one considers low or high energy
levels, and the methods used to study this dynamics further depend on
the specific properties of $\sigma$, i.e., on whether $\sigma$ is
assumed to be exact or symplectic.

The existence problem for periodic orbits of a charge in a magnetic
field was first addressed in the context of symplectic geometry by
V.I. Arnold in the early 80s; see \cite{Ar:fs, Ar88}. Namely, Arnold
proved that, as a consequence of the Conley--Zehnder theorem,
\cite{CZ:arnold}, a twisted geodesic flow on $\T^2$ with symplectic
magnetic field has periodic orbits on all energy levels when the
metric is flat and on all low energy levels for an arbitrary metric,
\cite{Ar88}. It is still unknown if the latter result can be extended
to all energy levels; however it was generalized to all surfaces in
\cite{Gi:FA}.

\begin{Example}
  Assume that $M$ is a surface and let $\sigma=f\,dA$, where $dA$ is
  an area form. Assume furthermore that $f$ has a non-degenerate
  critical point at $x$. Then it is not hard to see that essentially
  by the inverse function theorem the twisted geodesic flow on a low
  energy level has a closed orbit near the fiber over $x$.
\end{Example}

Since Arnold's work, the problem has been studied in a variety of
settings. We refer the reader to, e.g., \cite{Gi:newton} for more
details and references prior to 1996 and to, e.g., \cite{AMMP, CMP,
  GGM, Ke99} for some more recent results and references.

Here we focus exclusively on the case where the magnetic field form
$\sigma$ is symplectic (i.e., non-vanishing when $\dim M=2$), and we
are interested in the existence problem for periodic orbits on low
energy levels.  In this setting, in all dimensions, the existence of
at least one closed orbit with contractible projection to $M$
was proved in \cite{GG:wm, Us:tgf}.

Furthermore, when $\sigma$ is symplectic, we can also think of $M$ as
a symplectic submanifold of $(T^*M,\omega)$ and $K$ as a Hamiltonian
on $T^*M$ attaining a Morse--Bott non-degenerate local minimum $K=0$
at $M$. Thus we can treat the problem of the existence of periodic
orbits on a low energy level $P_\eps=\{ K=\eps\}$ as a generalization
of the classical Moser--Weinstein theorem (see \cite{Mo76, We}), where
an isolated non-degenerate minimum is replaced by a Morse--Bott
non-degenerate minimum and the critical set is symplectic. This is the
point of view taken in \cite{Ke99} and then in, e.g.,
\cite{GG:capacity, GG:wm}. To prove the existence of a periodic orbit
on every low energy level one first shows that almost all low energy
levels carry a periodic orbit with mean index in a certain range
depending only on $\dim M$ and having contractible projection to $M$;
see, e.g., \cite{GG:capacity, Sc}. This fact does not really require
$M$ to be symplectic; it is sufficient to assume that $\sigma\neq
0$. Then a Sturm theory type argument is used in \cite{GG:wm, Us:tgf}
to show that long orbits must necessarily have high index, and hence,
by the Arzela--Ascoli theorem, every low energy level carries a
periodic orbit. At this step, the assumption that the Hessian $d^2K$
is positive definite on the normal bundle to $M$ becomes essential,
cf.\ \cite[Sect.\ 2.4]{GG:capacity}.

There are also multiplicity results. Already in \cite{Ar:fs, Ar88}, it
was proved that when $M=\T^2$ and $\sigma$ is symplectic, there are at
least three (or four in the non-degenerate case) periodic orbits on
every low energy level $P_\eps$. Furthermore, Arnold also conjectured
that the lower bounds on the number of periodic orbits are governed by
Morse theory and Lusternik--Schnirelmann theory as in the Arnold
conjecture whenever $\sigma$ is symplectic and $\eps>0$ is small
enough. These lower bounds were then proved for surfaces in
\cite{Gi:FA}.

For the torus the proof is particularly simple. Let us fix a flat
connection on $P_\eps=\T^2\times S^1$.  When $\eps>0$ is small, the
horizontal sections are transverse to $X_K$, and one can show that the
resulting Poincar\'e return map is a Hamiltonian diffeomorphism
$\T^2\to\T^2$.  Now it remains to apply the Conley--Zehnder
theorem. Note that this argument captures only the short orbits, i.e.,
the orbits in the homotopy class of the fiber.  Likewise, the proof in
\cite{Gi:FA} captures only the orbits that stay close to a fiber and
wind around it exactly once. In higher dimensions, however, it is not
entirely clear how to define such short orbits. The difficulty arises
from the fact that $d^2K$ has several ``modes'' in every fiber, and
the modes can vary significantly and bifurcate from one fiber to
another. Furthermore, the Weinstein--Moser theorem provides a
hypothetical lower bound which is different from the one coming from
the Arnold conjecture perspective; see \cite{Ke99}. Without a
distinguished class of short orbits to work with, one is forced to
consider all periodic orbits and, already for $M=\T^2$, use the Conley
conjecture type results in place of the Arnold
conjecture. Hypothetically, as is observed in \cite{GG:wm}, every low
energy level should carry infinitely many simple periodic orbits, at
least when $(M,\sigma)$ is a symplectic CY manifold. This is still a
conjecture when $\dim M> 2$, but in dimension two the question has
been recently settled in \cite{GGM}. Namely, we have

\begin{Theorem}[\cite{GGM}]
\label{thm:magnetic}
Assume that $M$ is a surface of genus $g\geq 1$ and $\sigma$ is
symplectic. Then for every small $\eps>0$, the flow of $K$ has
infinitely many simple periodic orbits on $P_\eps$ with contractible
projections to $M$. Moreover, assume that the flow has finitely many
periodic orbits in the free homotopy class $\ff$ of the fiber. Then
for every sufficiently large prime $k$ there is a simple periodic
orbit in the class $\ff^k$, and all such classes are distinct.
\end{Theorem}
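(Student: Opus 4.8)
The plan is to deduce Theorem \ref{thm:magnetic} from the contact Conley conjecture, Theorem \ref{thm:CCC}, by realizing the low-energy level $P_\eps = \{K = \eps\}$ as (a contact manifold contactomorphic to) a pre-quantization circle bundle over the surface $M$, with the symplectic form $\sigma$ (suitably rescaled) playing the role of the curvature. The first step is to recall the normal-form description of a neighborhood of the zero section in $(T^*M, \omega_0 + \pi^*\sigma)$: since $M$ is a symplectic submanifold with symplectic normal bundle, and since $d^2K$ is positive-definite on the normal directions, the level sets $P_\eps$ for small $\eps > 0$ are smooth hypersurfaces of contact type bounding the ``disk bundle'' $W_\eps = \{K \le \eps\}$. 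A standard computation (going back to \cite{Ke99, GG:capacity, GG:wm}) shows that, after rescaling, $P_\eps$ is contactomorphic to the pre-quantization bundle $P$ over $M$ with Euler class $-[\sigma]$ (up to a constant factor absorbed into $\eps$), the fiber class $\ff$ corresponding on both sides, and $W_\eps$ providing a strong symplectic filling; the contact form $\alpha_\eps$ induced from $K$ differs from the Boothby--Wang form $\alpha_0$ by a term that is $O(\eps)$ in $C^1$, so for $\eps$ small it supports the same contact structure $\xi$.

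**Next I would** verify the two hypotheses of Theorem \ref{thm:CCC}. Since $M$ is a closed surface of genus $g \ge 1$, it is aspherical ($\pi_r(M) = 0$ for $r \ge 2$), giving (i). For (ii), $\dim P = 3$ so $c_1(\xi) \in \H^2(P;\R)$; since $\sigma$ is symplectic and $g \ge 1$, one checks using the Gysin sequence of $\pi\colon P \to M$ that $c_1(\xi) = \pi^* c_1(TM)$ maps to zero in $\H^2(P;\R)$ when pulled back further — more carefully, $c_1(TM)$ is a multiple of the Euler class $[\sigma]$ on $M$ (for $g=1$ it vanishes, for $g \ge 2$ it is a negative multiple), which becomes torsion, hence zero in real coefficients, in $\H^2(P;\R)$ because the fiber of $\pi$ kills $[\sigma]$. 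So $c_1(\xi)$ is atoroidal (indeed zero in $\H^2(P;\R)$), verifying (ii). It remains to supply index-admissibility of $\alpha_\eps$: this is precisely the content of \cite{Be} cited in the excerpt — the twisted geodesic flow on a low-energy level for a symplectic magnetic field on a surface of genus $g \ge 2$ is index-admissible. For the torus $g = 1$ one either invokes the analogous index estimate or argues separately; in any case, for $\eps$ small the contractible Reeb orbits of $\alpha_\eps$ are the short fiber-like orbits whose Conley--Zehnder indices, computable from the Moser--Weinstein normal form, avoid the forbidden values $2-n, 2-n\pm 1 = 1, 0, 2$ (here $n=1$, so $\dim P = 3$), after possibly a $C^1$-small non-degenerate perturbation.

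**With the setup in place,** Theorem \ref{thm:CCC} applies verbatim: the Reeb flow of $\alpha_\eps$ on $P \cong P_\eps$ has infinitely many simple closed orbits with contractible projections to $M$, and if the flow has finitely many periodic orbits in the fiber class $\ff$ and these are weakly non-degenerate, then for every sufficiently large prime $k$ there is a simple closed orbit in the class $\ff^k$, with all $\ff^k$ distinct. Translating back through the contactomorphism $P_\eps \cong P$ — which matches Reeb flows up to reparametrization and matches the fiber classes — yields exactly the two assertions of Theorem \ref{thm:magnetic}, noting that the periodic orbits of $K$ on $P_\eps$ are precisely (reparametrized) Reeb orbits of $\alpha_\eps$ and that ``contractible projection to $M$'' is preserved. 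Finally, one should observe that the weak non-degeneracy hypothesis in the ``moreover'' clause of Theorem \ref{thm:magnetic} corresponds to that in Theorem \ref{thm:CCC}.

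**The main obstacle** I expect is not the abstract application but the normalization step: carefully identifying $P_\eps$ with the pre-quantization bundle $P$ as a \emph{contact} manifold — including checking that the contact structure (not just the diffeomorphism type) matches, that the filling $W_\eps$ has the needed properties, and that the fiber homotopy classes are correctly identified under this contactomorphism — and, intertwined with this, pinning down the index-admissibility of $\alpha_\eps$ uniformly for small $\eps$, which for $g = 1$ is slightly more delicate than the $g \ge 2$ case covered by \cite{Be} since the relevant index window is narrow in dimension three. Once these geometric-topological identifications are secured, the dynamical conclusion is an immediate consequence of Theorem \ref{thm:CCC}.
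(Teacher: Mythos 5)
Your high-level strategy for $g \geq 2$ --- identify $P_\eps$ with a pre-quantization circle bundle, check asphericity of $M$ and atoroidality of $c_1(\xi)$, cite \cite{Be} for index-admissibility, and invoke Theorem~\ref{thm:CCC} --- is indeed the paper's main route, and your verification of hypotheses (i) and (ii) is essentially correct. However, there are two genuine gaps.

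First, and most seriously, you misread the statement: the ``moreover'' clause of Theorem~\ref{thm:magnetic} has \emph{no} weak non-degeneracy hypothesis, whereas Theorem~\ref{thm:CCC}'s corresponding clause does. Your closing sentence asserting that these hypotheses ``correspond'' is simply false, and Theorem~\ref{thm:CCC} therefore does \emph{not} yield the growth assertion of Theorem~\ref{thm:magnetic} as stated. The paper explicitly flags this: the existence of simple orbits in the class $\ff^k$ for large primes $k$ \emph{without} non-degeneracy assumptions is the part that does not follow from Theorem~\ref{thm:CCC}, and it is proved by a separate argument --- applying the second case of Theorem~\ref{thm:contact-sdm} (the exact, Calabi--Yau case, which places no restriction on the free homotopy class $\fc$) to the disconnected contact manifold $P_\eps \sqcup P_E$ for large $E$, filled by the region $\{\eps \le K \le E\}$ in $T^*M$, with $\fc = \ff$. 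Your proof omits this entirely.

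Second, the torus case $g=1$ is not handled. Index-admissibility via \cite{Be} is only available for $g \geq 2$; you acknowledge this as a ``main obstacle'' but offer no resolution. The paper does not push Theorem~\ref{thm:CCC} through in the torus case at all: instead it uses Arnold's flat cross-section argument, which exhibits a Hamiltonian return map $\T^2 \to \T^2$ on low levels, and then invokes the (Hamiltonian) Conley conjecture for $\T^2$ (proved in \cite{FH}) rather than the contact version. So for $g=1$ the proof is along genuinely different lines, and your sketch would need to be replaced wholesale in that case.
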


When $M=\T^2$, the theorem immediately follows from Arnold's cross
section argument once one uses the Conley conjecture for $\T^2$
(proved in \cite{FH}) instead of the Conley--Zehnder theorem; see
\cite{GG:wm}.  When $g\geq 2$, Theorem \ref{thm:magnetic} (almost)
follows from Theorem \ref{thm:CCC} since $P_\eps$ has contact type and
the flow is index--admissible as observed in \cite{Be}. The part that
is not a consequence of Theorem \ref{thm:CCC} is the existence of a
simple periodic orbit in the class $\ff^k$ for a large prime $k$
without any non-degeneracy assumptions. This is proved by applying the
second case of Theorem \ref{thm:contact-sdm} to the disjoint union
$P_\eps\sqcup P_E$, where $E$ is large, with the filling $W$ formed by
the part of $T^*M$ between these two energy levels, and $\fc=\ff$. The
proof of Theorem \ref{thm:magnetic} heavily relies on the machinery of
cylindrical contact homology via its dependence on Theorem
\ref{thm:CCC}. Note, however, that in the present setting one might be
able to circumvent foundational difficulties by using automatic
transversality results from~\cite{HN}. Alternatively, one could work
with the linearized contact homology or the equivariant symplectic
homology for the filling $W$.

Two difficulties arise in extending Theorem \ref{thm:magnetic} to
higher dimensions. One is that the energy levels do not have contact
type, and hence the standard contact or symplectic homology techniques
are not applicable. This difficulty appears to be more technical than
conceptual: using Sturm theory as in \cite{GG:wm} one can still
associate to a level a variant of symplectic homology generated by
periodic orbits on the level. A more serious obstacle is the lack of
filtration by the free homotopy classes $\ff^k$, which plays a central
role in the proof.

There seems to be no reason to expect Theorem \ref{thm:magnetic} to
hold for $S^2$. However, no counterexamples are known. For instance,
let us consider the round metric on $S^2$ and a non-vanishing magnetic
field $\sigma$ symmetric with respect to rotations about the
$z$-axis. The twisted geodesic flow on every energy level is
completely integrable. It would be useful and illuminating to analyze
this flow and check if it has infinitely many periodic orbits on every
(low or high) energy level.

It is conceivable that for any magnetic field, every sufficiently low
energy level carries infinitely many periodic orbits. For exact
magnetic fields on closed surfaces this is proved for almost all low
energy levels in \cite{AMMP} by (low-dimensional) methods of
``classical calculus of variations'', and it would be extremely
interesting to understand this phenomenon of ``almost existence of
infinitely many periodic orbits'' from a symplectic topology
perspective and generalize it to higher dimensions. Furthermore, even
in dimension two, no examples of magnetic flows with finitely many
periodic orbits on arbitrarily low energy levels are known. For
instance, it is not known if the completely integrable twisted
geodesic flow on $S^2$ with an exact $S^1$-invariant magnetic field
$\sigma$ has infinitely many periodic orbits on only almost all low
energy levels or in fact on all such levels. (Note that the
Katok--Ziller flows from \cite{Ka, Zi} correspond to high energy
levels.)

\section{Beyond the Conley conjecture}
\label{sec:beyond}

\subsection{Franks' theorem}
\label{sec:Franks}
Even when the Conley conjecture fails, the existence of infinitely
many simple periodic orbits is, as we have already seen, a generic
feature of Hamiltonian diffeomorphisms (and Reeb flows) for a broad
class of manifolds. There is, however, a different and more
interesting, from our perspective, phenomenon responsible for the
existence of infinitely many periodic orbits. The starting point here
is a celebrated theorem of Franks.

\begin{Theorem}[\cite{Fr1, Fr2}]
\label{thm:Fr}
Any area preserving diffeomorphism $\varphi$ (or, equivalently, a
Hamiltonian diffeomorphism) of $S^2$ with at least three fixed points
has infinitely many simple periodic orbits.
\end{Theorem}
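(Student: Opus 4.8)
The plan is to reduce the statement to a purely two-dimensional, surface-dynamics argument, since on $S^2$ the powerful machinery of low-dimensional dynamics — Brouwer theory, the Poincar\'e--Birkhoff theorem, and the theory of foliations transverse to the flow — is available and is in fact the natural tool here, rather than Floer homology. First I would dispose of the case where $\varphi$ has a periodic orbit that is not a fixed point: any such orbit of period $k\ge 2$ gives $k$ distinct fixed points of $\varphi^k$, and one can iterate this observation; so it suffices to produce a single periodic orbit of period $\ge 2$ under the hypothesis that $\varphi$ has at least three fixed points. The key structural input is Franks' theory of free disk systems and the associated ``open annulus'' picture: given an area-preserving $\varphi$ of $S^2$ with three fixed points $p_1,p_2,p_3$, remove two of them, say $p_1$ and $p_2$, to obtain an area-preserving diffeomorphism of the open annulus $A=S^2\setminus\{p_1,p_2\}$ that still has a fixed point (namely $p_3$) in the interior.

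The central step is then a version of the Poincar\'e--Birkhoff theorem for the open annulus. The mechanism is: because $\varphi$ preserves a finite area measure on $A$ and has a fixed point in the interior, one shows — this is where Franks' generalization of Poincar\'e--Birkhoff, via Brouwer's translation theorem and the study of the rotation set, enters — that the rotation set of the lift of $\varphi|_A$ to the universal cover either is a nondegenerate interval, or is a single rational number, or the fixed point $p_3$ forces, together with the recurrence coming from area-preservation, a periodic point of some period $q\ge 2$ with prescribed rotation number. Concretely: if the rotation set contains two distinct rationals $r_1<r_2$ in lowest terms with denominators $q_1,q_2$, Franks' theorem produces periodic orbits realizing each, and at least one has period $\ge 2$; if the rotation set degenerates to a single rational $p/q$, one either gets a periodic orbit of period $q\ge 2$ directly, or $q=1$, in which case every recurrent point rotates ``trivially'' and one uses the existence of the \emph{third} fixed point together with a Brouwer-theory argument on the complementary disks to derive a contradiction or to extract a genuine periodic orbit. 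The three-fixed-point hypothesis is exactly what is needed to rule out the exceptional configuration (realized by the irrational rotation of $S^2$, which has only two fixed points and no other periodic orbits): with only two fixed points the annulus argument can collapse, but a third fixed point obstructs this.

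The hard part, and the main obstacle, is the open-annulus Poincar\'e--Birkhoff step: the classical Poincar\'e--Birkhoff theorem requires a closed annulus with a boundary twist condition, and neither a compact invariant annulus nor an a priori twist is given here. Franks' insight is that area-preservation (Poincar\'e recurrence) can substitute for the boundary twist, via a careful analysis of chains of free disks and Brouwer's plane translation theorem, to force periodic points realizing every rational in the interior of the rotation set. Making this rigorous — defining the rotation set correctly on the non-compact annulus, handling the ``ends'' where the two removed fixed points sit, and extracting from a single interior fixed point plus recurrence the requisite twisting — is the technical heart of the proof and is carried out in \cite{Fr1, Fr2}. Once a periodic orbit of period $q\ge 2$ is in hand, one removes a point of it and repeats the construction on a new annulus, or simply observes that $\varphi^q$ now has strictly more fixed points than $\varphi$ and induct; this bootstrapping yields periodic orbits of arbitrarily large period and hence infinitely many simple periodic orbits.
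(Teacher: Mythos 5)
Your proposal sketches Franks' original low--dimensional argument — Brouwer plane translation theory, a Poincar\'e--Birkhoff theorem for the open annulus $S^2\setminus\{p_1,p_2\}$, rotation sets, with area preservation and Poincar\'e recurrence substituting for the boundary twist — and this is indeed a correct route; it is the proof given in the references~\cite{Fr1,Fr2} that the theorem is attributed to. The paper, however, outlines a genuinely different, purely symplectic--topological proof, taken from~\cite{CKRTZ}. That argument runs by contradiction: assuming finitely many periodic points (all fixed, after passing to an iterate), it invokes the index resonance relations of~\cite{GK09} together with the SDM machinery (Theorem~\ref{thm:sdm-conley}) to force the existence of two elliptic, strongly non-degenerate fixed points $x,y$ of irrational mean index and a third fixed point $z$ of zero mean index; it then glues two punctured copies of $S^2$ along two of these points into a torus and derives a contradiction from the Floer homology of the resulting area-preserving map of $\T^2$ (either $\HF_*(\psi)=0$ or $\HF_*(\psi)\cong\H_{*+1}(\T^2)$, which the contributions of $x^\pm$ or of the hyperbolic point cannot match). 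The trade-off is instructive: your route works for homeomorphisms, not just diffeomorphisms, and yields explicit rotation-number and growth information, but it is intrinsically two-dimensional; the CKRTZ route requires smoothness and Floer theory, but uses exactly the tools (local Floer homology, mean-index resonance relations, SDMs) that have a chance of generalizing to higher-dimensional Hamiltonian dynamics, which is the point of Section~\ref{sec:franks-gen}.

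One small caution about your write-up: the opening reduction — ``it suffices to produce a single periodic orbit of period $\ge 2$'' — is not a clean reduction as stated, since applying the same argument to $\varphi^q$ can return iterates of orbits already found rather than new simple orbits. Franks does not argue this way; he extracts infinitely many orbits directly from the rotation-set analysis (a rotation interval with nonempty interior yields a periodic orbit for every rational in its interior, and these are pairwise distinct simple orbits), with the degenerate single-point rotation set ruled out by recurrence plus the third fixed point. Your later paragraphs essentially say this, so the issue is one of exposition rather than a fatal gap, but the ``bootstrap by iteration'' framing should be dropped in favor of the rotation-set count.
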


In fact, the theorem, already in its original form, was proved for
homeomorphisms. This aspect of the problem is outside the scope of the
paper, and here we focus entirely on smooth maps. Furthermore, in the
setting of the theorem, there are also strong growth results; see
\cite{FH, LeC, Ke:JMD} for this and other refinements of Theorem
\ref{thm:Fr}. The proof of the theorem given in \cite{Fr1, Fr2}
utilized the methods from low--dimensional dynamics. Recently, a
purely symplectic topological proof of the theorem was obtained in
\cite{CKRTZ}; see also \cite{BH} for a different approach.

\begin{proof}[Outline of the proof from \cite{CKRTZ}] 
  Arguing by contradiction and passing if necessary to an iteration of
  $\varphi$, we can assume that $\varphi$ has finitely many periodic
  points, that these points are the fixed points and that there are at
  least three fixed points. Applying a variant of the resonance
  relations from \cite{GK09} combined with Theorem
  \ref{thm:sdm-conley} and a simple topological argument, it is not
  hard to see that there must be (at least) two fixed points $x$ and
  $y$ with irrational mean indices and at least one point $z$ with
  zero mean index.  Note that, since $\dim S^2=2$, the points $x$ and
  $y$ are elliptic and strongly non-degenerate, and $z$ is either
  degenerate or hyperbolic.

  In the former case, we glue together two copies of $S^2$ punctured
  at $y$ and $z$ by inserting narrow cylinders at the seams as in
  \cite[App.\ 9]{Ar}. As a result, we obtain a torus $\T^2$, and the
  Hamiltonian diffeomorphism $\varphi$ gives rise to an area
  preserving map $\psi\colon \T^2\to\T^2$. This map is not necessarily
  a Hamiltonian diffeomorphism, but it is symplectically isotopic to
  $\id$ and its Floer homology $\HF_*(\psi)$ is defined. Hence, either
  $\HF_*(\psi)=0$ or $\HF_*(\psi)\cong \H_{*+1}(\T^2)$ when $\psi$ is
  Hamiltonian.  Now one shows that, roughly speaking, any of the
  points $x^\pm\in \T^2$ corresponding to $x$ represents a non-trivial
  homology class of degree different from $0$ and $\pm 1$, which is
  impossible.

  When $z$ is a hyperbolic point, we use the points $x$ and $y$ to
  produce the torus, and again a simple Floer homological argument
  leads to a contradiction. Indeed, for a sufficiently large iteration
  of $\psi$, each elliptic point has large Conley--Zehnder index,
  since Theorem \ref{thm:sdm-conley} rules out SDM points, and each
  hyperbolic point has even index. Moreover, hyperbolic points give
  rise to non-trivial homology classes (cf.\ \cite[Thm.\
  1.7]{GG:generic}). Thus $\HF_*(\psi)\neq 0$ but $\HF_1(\psi)=0$,
  which is again impossible.  (Alternatively, one can just apply
  Theorem \ref{thm:hyperbolic} below to deal with this
  case.) \end{proof}

\subsection{Generalizing Franks' theorem}
\label{sec:franks-gen} 
Even though all proofs of Franks' theorem are purely low-dimensional,
it is tempting to think of the result as a particular case of a more
general phenomenon. For instance, one hypothetical generalization of
Franks' theorem would be that a Hamiltonian diffeomorphism with “more
than necessary” non-degenerate (or just homologically non-trivial in
the sense of Section \ref{sec:FH}) fixed points has infinitely many
periodic orbits. Here “more than necessary” is usually interpreted as
a lower bound arising from some version of the Arnold conjecture. For
$\CP^n$, the expected threshold is $n + 1$ and, in particular, it is
$2$ for $S^2$ as in Franks' theorem, cf.\ \cite[p.\ 263]{HZ}.

However, this conjectural generalization of Franks' theorem seems to
be too restrictive, and from the authors' perspective it is fruitful
to put the conjecture in a broader context. Namely, it appears that
the presence of a fixed point that is unnecessary from a homological
or geometrical perspective is already sufficient to force the
existence of infinitely many simple periodic orbits. Let us now state
some recent results in this direction.

\begin{Theorem}[\cite{GG:hyperbolic}]
\label{thm:hyperbolic}
A Hamiltonian diffeomorphism of $\CP^n$ with a hyperbolic periodic
orbit has infinitely many simple periodic orbits.
\end{Theorem}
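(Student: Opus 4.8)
The plan is to argue by contradiction, combining the ``degenerate case'' already handled by Theorem~\ref{thm:sdm-conley} with a crossing--energy mechanism tailored to hyperbolic orbits, and then to read off an incompatibility with the Floer homology of $\CP^n$. Suppose $\varphi$ has only finitely many simple periodic orbits. Passing to an iterate, we may assume every periodic orbit of $\varphi$ is a fixed point; the given hyperbolic orbit becomes a hyperbolic fixed point $x$ (it stays hyperbolic, since no eigenvalue of $d\varphi|_x$ lies on the unit circle, so $x$ is in fact strongly non-degenerate). By the contrapositive of Theorem~\ref{thm:sdm-conley}, no fixed point of $\varphi$ is an SDM. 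Recall that $\CP^n$ is monotone with minimal Chern number $N=n+1$, so for every $k$ we have $\HF_*(\varphi^k)\cong\H_{*+n}(\CP^n;\Z_2)\otimes\Lambda$ with $\Lambda=\Z_2[q,q^{-1}]$ and $q$ of even degree $2N=2(n+1)$; hence $\HF_d(\varphi^k)=\Z_2$ for $d\equiv n\pmod 2$ and $\HF_d(\varphi^k)=0$ otherwise. Recapping shifts degree by a multiple of $2N$ and action by the corresponding multiple of $\lambda N$, where $[\omega]=\lambda\, c_1(T\CP^n)$. Since $x$ is hyperbolic, each iterate $x^k$ is non-degenerate with one-dimensional local Floer homology concentrated in degree $\MUCZ(x^k)$, and $\MUCZ(x^k)\approx k\Delta(x)$.

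The analytic heart of the argument is a uniform crossing--energy estimate for $x$: there are an isolating neighbourhood $U\ni x$ and a constant $c_0>0$ such that, for every $k$, any non-constant Floer cylinder of $H^{\natural k}$ with one asymptotic end on $x^k$ has energy at least $c_0k$ --- heuristically because a hyperbolic orbit is a uniformly hyperbolic invariant set whose $k$-fold iterate must be traversed $k$ times, each traversal costing a definite amount of energy (this is the hyperbolic analogue of the localization behind Theorem~\ref{thm:persist-lf} and the proof of the Conley conjecture). Fixing a capping of $x$ with $a=\CA_H(x)$ and setting $I_k=(ka-\tfrac13 c_0k,\ ka+\tfrac13 c_0k)$ for large $k$, we conclude that no Floer differential of $H^{\natural k}$ joins a recapping of $x^k$ to a generator with action in $I_k$. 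Thus the recappings of $x^k$ with action in $I_k$ form a zero-differential subcomplex of $\CF^{I_k}_*(\varphi^k)$ into which nothing maps, hence a direct summand of $\HF^{I_k}_*(\varphi^k)$; as consecutive such recappings differ by $2N$ in degree, this summand is non-zero in roughly $2c_0k/(3\lambda N)$ distinct degrees, all of the parity of $\MUCZ(x^k)$.

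It then remains to transfer this ``column'' to the total homology $\HF_*(\varphi^k)$. When $\Delta(x)\neq 0$, the residues $\MUCZ(x^k)\bmod 2N$ drift with $k$, and one chooses a subsequence of $k$ for which the column lies in degrees of parity different from $n$ --- where $\HF_*(\varphi^k)$ vanishes --- so every class of the column must die in $\HF_*(\varphi^k)$; but a cancelling partner would sit at energy distance at least $c_0k$ from $x^k$, and the finitely many other periodic orbits, together with the bounded spread of the $n+1$ spectral invariants $c(u^j,\varphi^k)$ within one $\lambda N$-period of the action spectrum, leave no such partners for large $k$ --- a contradiction. When $\Delta(x)=0$, the orbit is positive hyperbolic with $\MUCZ(x^k)=0$ and is \emph{not} an SDM (its local Floer homology sits in degree $0$, not $n$); here one uses instead the resonance relations for Hamiltonian diffeomorphisms of $\CP^n$ with finitely many periodic orbits (as in \cite{CGG, GG:gaps, GK09}): the presence of an integer mean index forces the remaining mean indices and actions into a configuration that cannot reproduce the ring structure of $QH_*(\CP^n)$ reflected in the spectral invariants of all $\varphi^k$, again a contradiction. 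Either way, $\varphi$ has simple periodic orbits of arbitrarily large period.

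\emph{Main obstacle.} The hard part is the last step: controlling, uniformly in $k$, exactly which generators of the filtered homology $\HF^{I_k}_*(\varphi^k)$ can be cancelled upon passing to $\HF_*(\varphi^k)$. Because $\CP^n$ is only monotone (not symplectically Calabi--Yau), recapping moves both degree and action, so a naive dimension count over the long window $I_k$ is inconclusive; one genuinely needs the rigidity of the spectral invariants $c(u^j,\varphi^k)$ and the resonance relations to pin down the action spectrum of the iterates precisely enough, the delicate case being a positive hyperbolic orbit with vanishing mean index, which mimics an SDM without being one. A secondary technical point is the crossing--energy estimate itself for a fixed point that is hyperbolic but need not be linearizable.
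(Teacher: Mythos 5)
Your high-level strategy --- rule out SDMs via Theorem~\ref{thm:sdm-conley}, establish a uniform crossing--energy bound near the hyperbolic fixed point, and then derive a contradiction from the Floer homology of $\CP^n$ --- matches the shape of the argument in \cite{GG:hyperbolic}, and the reduction to a hyperbolic \emph{fixed} point is fine. But there is a genuine gap in the crossing--energy step, and the contradiction step is different from the paper's and not carried out. The paper's crossing--energy theorem is that there exist an isolating neighbourhood $U$ of $x$ and a constant $c_0>0$, \emph{independent of $k$}, such that any Floer cylinder of $H^{\natural k}$ asymptotic to $x^k$ at one end that leaves $U$ has energy at least $c_0$. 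You instead assert a lower bound of $c_0 k$, justified by the heuristic that the $k$-fold iterate ``must be traversed $k$ times.'' That heuristic confuses the $t$-direction (where the loop winds $k$ times around $x$) with the $s$-direction (which the energy $\int|\partial_s u|^2$ measures); a cylinder escapes $U$ once spatially, not $k$ times, and there is no mechanism forcing the energy to grow linearly. Indeed the nontrivial content of the theorem in \cite{GG:hyperbolic} is precisely that the bound does \emph{not} deteriorate as $k\to\infty$, not that it improves. Your subsequent construction --- a growing action window $I_k$ of width $\sim c_0 k$, producing a ``column'' of $\sim c_0 k/(\lambda N)$ recappings of $x^k$ as a direct summand --- relies essentially on the linear growth and collapses once the bound is the correct constant one: the window then has bounded width and contains a bounded number of recappings.

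The final step is also where you and the paper part ways, and yours is not complete. The paper uses the multiplicative structure of $QH_*(\CP^n)$ (the relation $[\mathrm{pt}]^{*(n+1)}=q[\CP^n]$ and the resulting sub-additivity of spectral invariants) to show that, if there were only finitely many periodic orbits, some iterations $\varphi^k$ would have to admit Floer connecting trajectories with arbitrarily small positive energy through $U$, contradicting the constant crossing--energy bound. Your alternative --- splitting on whether $\Delta(x)=0$, using parity drift of $\MUCZ(x^k)\bmod 2N$ in one case and invoking resonance relations in the other --- is only sketched, and you yourself flag it as ``the main obstacle.'' As written it does not close: in the $\Delta(x)\neq 0$ case the parity of $\MUCZ(x^k)$ can be constant in $k$ (e.g., all Floquet multipliers real and positive gives $\MUCZ(x^k)=k\MUCZ(x)$), so the desired subsequence with the ``wrong'' parity need not exist; and in the $\Delta(x)=0$ case nothing concrete is derived from the resonance relations. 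So the proposal identifies the right two ingredients but misstates the first and leaves the second unproved; the quantum-product mechanism the paper uses is the missing idea.
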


Here, clearly, the hyperbolic periodic orbit is unnecessary from every
perspective. In contrast with Franks' theorem and the Conley
conjecture type results, as of this writing, there are no growth
results in this setting. The theorem actually holds for a broader
class of manifolds $M$, and the requirements on $M$ can be stated
purely in terms of the quantum homology of $M$; see \cite[Thm.\
1.1]{GG:hyperbolic}. Among the manifolds meeting these requirements
are, in addition to $\CP^n$, the complex Grassmannians $\Gr(2; N)$,
$\Gr(3; 6)$ and $\Gr(3; 7)$; the products $\CP^m\times 􏰺 P^{2d}$ and
$\Gr(2; N)\times P^{2d}$, where $P$ is symplectically aspherical and
$d\leq m$ in the former case and $d\leq 2$ in the latter; and the
monotone products $\CP^m\times \CP^m$. There is also a variant of the
theorem for non-contractible hyperbolic orbits, which is applicable
to, for example, the product $\CP^m\times P^{2d}$. Note also that the
generalization of Franks' theorem to $\CP^n$, at least for
non-degenerate Hamiltonian diffeomorphisms, would follow if one could
replace in Theorem \ref{thm:hyperbolic} a hyperbolic fixed point by a
non-elliptic one.

Another result fitting into this context is the following.

\begin{Theorem}[\cite{Gu:hq}]
\label{thm:hq}
Let $\varphi\colon \R^{2n}\to\R^{2n}$ be a Hamiltonian diffeomorphism
generated by a Hamiltonian equal to a hyperbolic quadratic form $Q$ at
infinity (i.e., outside a compact set) such that $Q$ has only real
eigenvalues. Assume that $\varphi$ has finitely many fixed points, and
one of these points, $x$, is non-degenerate (or just isolated and
homologically non-trivial) and has non-zero mean index. Then $\varphi$
has simple periodic orbits of arbitrarily large period.
\end{Theorem}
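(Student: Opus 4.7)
The plan is to argue by contradiction: suppose every simple periodic orbit of $\varphi$ has period at most some bound $N_0$, and derive a contradiction from Hamiltonian Floer homology of the iterates $H^{\nat k}$. First, I would define and compute $\HF_*(H^{\nat k})$. Because $H$ equals the hyperbolic quadratic form $Q$ at infinity and the infinitesimal generator $J\,d^2Q$ of the linear Hamiltonian flow has only real eigenvalues, the flow of $X_Q$ is strictly hyperbolic at infinity with no non-constant periodic orbits of any period. Consequently, all $k$-periodic orbits of $H^{\nat k}$ lie in a common compact region of $\R^{2n}$, and $\HF_*(H^{\nat k})$ is defined by the standard construction using compactly supported continuation homotopies that preserve the quadratic behavior at infinity. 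Deforming through such homotopies from $H$ to the autonomous Hamiltonian $Q$ (whose only critical point is the origin), and using that a symplectic path with only real eigenvalues has vanishing mean index, one obtains $\HF_*(H^{\nat k}) \cong \Z_2$, concentrated in a single degree $d_0$ independent of $k$.

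Second, under the contradiction hypothesis, for every prime $k > N_0$ the $k$-periodic orbits of $H^{\nat k}$ are precisely the $k$-th iterates of the finitely many fixed points $x = x_1, x_2, \dots, x_m$ of $\varphi$. For $k$ admissible with respect to the eigenvalues of $d\varphi|_{x_i}$, Theorem~\ref{thm:persist-lf} yields $\HF_*(x^k) \neq 0$, since $x$ is homologically non-trivial, with support contained in the degree window $[k\Delta(x) - n,\, k\Delta(x) + n]$. Because $\Delta(x) \neq 0$, this window drifts to $\pm\infty$ as $k \to \infty$ and is eventually disjoint from the fixed degree $d_0$.

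Third, I would split the Floer complex by mean-index windows to extract the contradiction. For $k$ sufficiently large, the windows $[k\Delta(x_i) - n,\, k\Delta(x_i) + n]$ for distinct values of $\Delta(x_i)$ are pairwise disjoint; since the Floer differential changes degree by exactly one, this splits $\CF_*(H^{\nat k})$ as a chain complex into sub-complexes (\emph{clusters}) indexed by the distinct values of $\Delta(x_i)$. Hence $\HF_*(H^{\nat k})$ decomposes as a direct sum of the cluster homologies, and the cluster associated with $\Delta(x)$ must be acyclic for large $k$ even though it contains the non-vanishing local piece $\HF_*(x^k)$. The main obstacle --- and the crux of the proof --- is to rule out that $\HF_*(x^k)$ is killed by global differentials involving iterates of the other $x_i$ with $\Delta(x_i) = \Delta(x)$. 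I would handle this by further filtering the cluster along the action $k\CA(x_i)$, using that the Floer differential strictly lowers action, and combining the resulting short exact sequence of filtered sub-complexes with the Euler-characteristic identity $\sum_i \chi(\HF_*(x_i^k)) = (-1)^{d_0}$ together with the periodicity in $k$ of the local Euler characteristics supplied by Theorem~\ref{thm:persist-lf}, so as to produce an arithmetic obstruction that is incompatible with $\HF_*(x^k) \neq 0$ for infinitely many admissible primes $k$.
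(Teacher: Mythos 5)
Your steps 1--3 are sound: with $H$ equal to a hyperbolic quadratic form at infinity whose Hamiltonian matrix has real spectrum, $\HF_*(H^{\nat k})\cong\Z_2$ does sit in a single degree $d_0$ independent of $k$; the window $[k\Delta(x)-n,\,k\Delta(x)+n]$ drifts away from $d_0$ because $\Delta(x)\neq 0$; and for large prime $k$ the Floer complex of $H^{\nat k}$ genuinely decomposes into direct summands indexed by the distinct mean indices of the finitely many fixed points, since for $k$ large the windows are separated by more than one degree while the differential shifts degree by exactly one.

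The genuine gap is exactly the step you flag as ``the crux.'' Acyclicity of the cluster containing $x$ forces its Euler characteristic to vanish, i.e.\ $\sum_{y:\,\Delta(y)=\Delta(x)}\chi(\HF_*(y^k))=0$, and this is in no tension with $\HF_*(x^k)\neq 0$: a second fixed point $y$ with $\Delta(y)=\Delta(x)$ and $\chi(\HF_*(y^k))=-\chi(\HF_*(x^k))$ is perfectly compatible with everything you have used, and Theorem~\ref{thm:persist-lf} does not prevent the degree shifts $s_k$ of $x^k$ and $y^k$ from tracking one another for infinitely many admissible primes $k$. Filtering the cluster by action produces a spectral sequence converging to zero, but convergence to zero is a permitted outcome; the ``arithmetic obstruction'' you invoke at the end is not actually produced, and no contradiction is derived. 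In short, to close this step you would need an invariant strictly sharper than the Euler characteristic of the cluster.

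The proof in \cite{Gu:hq} takes a different route at precisely this point. Rather than attempting a self-contained contradiction inside the Floer complex of a single large iterate, it compares the \emph{filtered} Floer homology $\HF^{I_k}_*(H^{\nat k})$ over a carefully chosen action window $I_k$ near $k\CA_H(x)$ (and a degree window near $k\Delta(x)$) across \emph{different} iterations $k$, exploiting the linear rescaling $\CA_{H^{\nat k}}(x^k)=k\CA_H(x)$ together with the index drift. The upshot is that the filtered homology must change between iterations, and such a change can only be accounted for by new simple periodic orbits of large period once the iterates of the finitely many fixed points have been tallied. This is what Section~\ref{sec:franks-gen} means when it says that in Theorems~\ref{thm:hq} and~\ref{thm:nc} the argument tracks ``the change in filtered Floer homology, for a carefully chosen action range (and/or degree), between different iterations of $\varphi$.'' Your decomposition-into-clusters framing is a reasonable way to see why something must break, but it does not by itself supply the needed mechanism.
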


As a consequence, regardless of whether the fixed-point set is finite
or not, $\varphi$ has infinitely many simple periodic orbits. In this
theorem the condition that the eigenvalues of $Q$ are real can be
slightly relaxed. Conjecturally, it should be enough to require $Q$ to
be non-degenerate and $x$ to have mean index different from the mean
index of the origin for $Q$. However, hyperbolicity of $Q$ is used in
an essential way in the proof of the theorem. Also, interestingly, in
contrast with Franks' theorem, the requirement that $x$ is
homologically non-trivial is essential and cannot be omitted, even in
dimension two. As an easy consequence of Theorem \ref{thm:hq}, we
obtain

\begin{Theorem}
[\cite{Gu:hq}]
\label{thm:hq2}
Let $\varphi\colon \R^{2n}\to\R^{2n}$, where $2n=2$ or $2n=4$, be a
Hamiltonian diffeomorphism generated by a Hamiltonian equal to a
hyperbolic quadratic form $Q$ at infinity as in Theorem
\ref{thm:hq}. Assume that $\varphi$ is strongly non-degenerate and has
at least two (but finitely many) fixed points. Then $\varphi$ has
simple periodic orbits of arbitrarily large period.
\end{Theorem}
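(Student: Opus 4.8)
The plan is to derive Theorem~\ref{thm:hq2} from Theorem~\ref{thm:hq} by a short index-counting argument, exactly parallel to the ``(i) holds'' case of Theorem~\ref{thm:generic} and to the hyperbolic-point case in the proof of Franks' theorem. First I would argue by contradiction: suppose $\varphi$ is strongly non-degenerate, has at least two but only finitely many fixed points, and has \emph{no} simple periodic orbits of arbitrarily large period. Since $\varphi$ is strongly non-degenerate, all iterations $\varphi^k$ are non-degenerate, so for $k$ not dividing any of the (finitely many) relevant rotation denominators the $k$-periodic points of $\varphi$ are exactly the $k$th iterates of fixed points; passing to a suitable iteration we may assume all periodic points are fixed points and that the fixed-point set $\Fix(\varphi)$ is still finite with at least two elements.

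Next I would invoke Theorem~\ref{thm:hq} in its contrapositive form: because $\varphi$ does \emph{not} have simple periodic orbits of arbitrarily large period, every fixed point $x$ of $\varphi$ that is homologically non-trivial must have zero mean index, $\Delta(x)=0$. In dimension $2n=2$ or $4$ a non-degenerate fixed point is automatically homologically non-trivial (its local Floer homology is $\Z_2$ in degree $\MUCZ(x)$, by Example~\ref{ex:lf}), so \emph{every} fixed point of $\varphi$ has $\Delta(x)=0$. By \ref{(CZ1)} and non-degeneracy this forces $|\MUCZ(x)|< n\le 2$ for every fixed point, i.e.\ $\MUCZ(x)\in\{-1,0,1\}$ when $2n=4$ and $\MUCZ(x)=0$ when $2n=2$. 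Now pass to a further large iteration $\varphi^N$: an orbit with $\Delta(x)=0$ has iterates with bounded Conley--Zehnder index (indeed $\MUCZ(x^N)$ stays in a fixed window around $N\Delta(x)=0$ by \ref{(CZ1)}/\ref{(CZ2)}), so after iterating we still have all indices confined to the same narrow band around $0$, while the fixed-point count is unchanged and at least two.

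Then I would obtain the contradiction from Floer theory for $\varphi$ on $\R^{2n}$ with the hyperbolic quadratic boundary condition. The Floer homology of such a $\varphi$ is well-defined (this is the setting of \cite{Gu:hq}) and is isomorphic, up to a degree shift determined by $Q$, to the homology of a point — in particular it is $\Z_2$ concentrated in a single degree $d_0$ (the index of the origin for $Q$), and is bounded. On the other hand, the total Floer complex after the iteration $\varphi^N$ is generated by $N$-periodic orbits whose indices all lie in the fixed window, while the number of generators (being at least the number of fixed points, $\ge 2$) does not shrink. A rank count then shows the homology cannot collapse to rank~$1$ in a single degree unless there is extra cancellation, which a more careful bookkeeping of the indices $\MUCZ(x^N)$ — using that at least two distinct generators survive with indices in $\{d_0-1,d_0,d_0+1\}$ and that the differential drops degree by one — rules out; this yields $\dim\HF_{*}(\varphi)\ge 2$, contradicting $\HF_*(\varphi)\cong\Z_2$.

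The main obstacle, I expect, is the final step: making the index bookkeeping tight enough that two fixed points with mean index $0$ genuinely obstruct the homology from having total rank one. In dimension two this is clean — all $\MUCZ(x)=0$, so the Floer complex after iteration is concentrated in a single degree with $\ge 2$ generators and \emph{zero} differential, immediately giving rank $\ge 2$. In dimension four one must rule out that the handful of index patterns in $\{-1,0,1\}$ (relative to the shift) allow all but one generator to cancel; here one uses that hyperbolic fixed points contribute even index and non-trivial homology (cf.\ \cite[Thm.~1.7]{GG:generic}) while the remaining fixed points, having $\Delta=0$ and being non-degenerate elliptic, contribute in a controlled way, so the parity and the count together force surviving homology in the wrong degree. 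Packaging this as ``two fixed points is one too many for $\HF_*$ of a point'' is the crux, but it is entirely analogous to the contradiction step in the proof of Theorem~\ref{thm:Fr} sketched above and in \cite[Thm.~1.7]{GG:generic}, and I would simply cite \cite{Gu:hq} for the precise form.
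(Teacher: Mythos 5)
Your high-level strategy — invoke Theorem~\ref{thm:hq} in contrapositive form to conclude that every fixed point must have $\Delta=0$, then obtain a contradiction from $\HF_*(\varphi)\cong\Z_2$ concentrated in a single degree — is the right one; this is indeed why the paper calls Theorem~\ref{thm:hq2} an ``easy consequence'' of Theorem~\ref{thm:hq}. (The paper itself gives no proof, only the citation to \cite{Gu:hq}, so there is no in-paper argument to compare against line by line.) Your dimension-two argument is complete: all fixed points then have $\MUCZ=0$ by the strict inequality in \ref{(CZ1)}, so after passing to an iterate the Floer complex is concentrated in degree $0$ with vanishing differential, giving $\dim\HF_0\geq 2$, a contradiction.

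The dimension-four step, however, is not closed by the bookkeeping you sketch, and the way you describe it is somewhat off. Having $\Delta(x)=0$ and $|\MUCZ(x)|<2$ puts every index in $\{-1,0,1\}$, but a naive rank or Euler-characteristic count does \emph{not} force the number of generators to be $1$: for example three fixed points with indices $(0,0,1)$, one differential arrow from degree $1$ to degree $0$, gives $\HF_*\cong\Z_2$ in degree $0$ with no contradiction. Moreover, your case split into ``hyperbolic'' and ``non-degenerate elliptic'' fixed points is a red herring — fixed points with $\Delta=0$ that are strongly non-degenerate need not be elliptic at all (they can be positive or negative hyperbolic in both factors, or loxodromic). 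The missing observation, which is exactly where the restriction $2n=4$ enters, is a parity constraint: for a strongly non-degenerate $A=d\varphi|_x\in\Sp(4)$ with $\Delta=0$, a case check over block types (two positive hyperbolic pairs, two negative hyperbolic pairs, a loxodromic quadruple, or two elliptic pairs with canceling rotation numbers; a mixed positive/negative hyperbolic pair is excluded because it makes $\Delta$ odd, and a lone elliptic factor makes $\Delta$ irrational) shows $\det(I-A)>0$ in every admissible case. Since $(-1)^{\MUCZ(A)}=(-1)^n\operatorname{sign}\det(I-A)$ with $n=2$, this forces $\MUCZ(x)$ even, hence $\MUCZ(x)=0$. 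Then, just as in dimension two, after iterating so that all periodic points are fixed, the whole Floer complex sits in degree $0$ with zero differential, and $\dim\HF_0\geq|\Fix|\geq 2$ contradicts $\HF_*(\varphi)\cong\Z_2$. Without that parity computation — or an equivalent precise argument — the step you flag as ``the crux'' really is a gap, not just a detail to be cited away, and it is also where one would see why the theorem is stated only for $2n\le 4$ (for $n\geq 3$ the even integers in $(-n,n)$ are no longer just $\{0\}$ and the count no longer closes by this route).
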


In the two-dimensional case, a stronger result is proved in
\cite[Thm.\ 5.1.9]{Ab}.  In the setting of Theorems \ref{thm:hq} and
\ref{thm:hq2}, one can be more precise about which simple periods
occur. Namely, for a certain integer $m>0$, starting with a
sufficiently large prime number, among every $m$ consecutive primes,
there exists at least one prime which is the period of a simple
periodic orbit. Thus, as in many other results of this type, we have
the growth lower bound $\const\cdot k/\ln k$.

Theorem \ref{thm:hyperbolic} and, with some extra work, Theorem
\ref{thm:hq} imply the case of Franks' theorem where $\varphi$ is
assumed to have a hyperbolic periodic orbit, e.g., when $\varphi$ is
non-degenerate. Furthermore, it is conceivable that one could prove
Franks' theorem as a consequence of Theorem \ref{thm:hyperbolic}. Such
a proof would certainly be of interest, but it would most likely be
much more involved than the argument in \cite{CKRTZ}.

Let us now turn to non-contractible orbits. Recall that a
(time-dependent) Hamiltonian flow $\varphi_H^t$ generated by a
Hamiltonian $H\colon S^1\times M \to \R$, there is a one-to-one
correspondence between one-periodic orbits of $\varphi_H^t$ and the
fixed points of $\varphi=\varphi_H$. Furthermore, as is easy to see
from the proof of the Arnold conjecture, the free homotopy class of an
orbit $x$ is independent of the Hamiltonian generating the time-one
map $\varphi$. Thus the notion of a contractible one-periodic orbit
(or even a ``contractible fixed point'') of $\varphi$ is
well-defined. Of course, the same applies to $k$-periodic orbits.

On a closed symplectic manifold $M$ a Hamiltonian diffeomorphism need
not have non-contractible one-periodic orbits. Indeed, the Hamiltonian
Floer homology vanishes for any non-trivial free homotopy class when
$M$ is compact, since all one-periodic orbits of a $C^2$-small
autonomous Hamiltonian are its critical points (hence contractible).
Thus, from a homological perspective, non-contractible periodic orbits
are totally unnecessary.

To state our next result, recall that a symplectic form $\omega$ on
$M$ is said to be atoroidal if for every map $v\colon \T^2 \to M$, the
integral of $\omega$ over $v$ vanishes. We have

\begin{Theorem}[\cite{Gu:nc}]
\label{thm:nc}
Let $M$ be a closed symplectic manifold equipped with an atoroidal
symplectic form $\omega$.  Assume that a Hamiltonian diffeomorphism
$\varphi$ of $M$ has a non-degenerate one-periodic orbit $x$ with
homology class $[x]\neq 0$ in $\H_1(M;\Z)/\Tor$ and that the set of
one-periodic orbits in the class $[x]$ is finite. Then, for every
sufficiently large prime $p$, the Hamiltonian diffeomorphism $\varphi$
has a simple periodic orbit in the homology class $p[x]$ and with
period either $p$ or $p'$, where $p'$ is the first prime greater than
$p$.
\end{Theorem}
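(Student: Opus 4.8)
The plan is to adapt the non-degenerate case of the Conley conjecture (Proposition~\ref{prop:CC-nd}) to non-contractible orbits, the crucial new leverage being that Hamiltonian Floer homology \emph{vanishes} in every non-trivial homology class. Atoroidality of $\omega$ is used in two ways: it forces $\omega|_{\pi_2(M)}=0$, so that $M$ is rational and there is no capping ambiguity; and it makes the symplectic action single-valued on free loops in a fixed free homotopy class, so that for each $k$ the filtered Floer homology $\HF^{(a,\,b)}_*\big(H^{\nat k};k[x]\big)$ --- generated by the $k$-periodic orbits of $H$ of homology class $k[x]$ and carrying a genuine action filtration --- is well defined. The key input is that $\HF_*\big(H^{\nat k};k[x]\big)=0$ for all $k\geq 1$: since $k[x]\neq 0$ in $\H_1(M;\Z)/\Tor$ and $M$ is closed, a $C^2$-small autonomous Hamiltonian has only contractible one-periodic orbits.

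First I would argue by contradiction: assume that for arbitrarily large primes $p$ the diffeomorphism $\varphi$ has no simple periodic orbit in class $p[x]$ of period $p$ and none of period $p'$. Write $z_1=x,z_2,\dots,z_m$ for the one-periodic orbits of $H$ in class $[x]$; by hypothesis this list is finite. If some $z_i$ is a symplectically degenerate maximum, then a non-contractible version of Theorem~\ref{thm:sdm} --- the squeezing construction of its proof localized in the homology class $k[x]$ --- yields, for every large prime $p$, a new simple orbit of period $p$ in class $p[x]$, proving the theorem in this case; so we may assume no $z_i$ is an SDM. For $p$ a large prime, $\gcd(p,p')=1$, $[x]$ is not divisible by $p'$, and $\H_1(M;\Z)/\Tor$ is torsion free, so every $p$-periodic orbit in class $p[x]$ is either simple or an iterate $z_i^p$, and every $p'$-periodic orbit in class $p[x]$ is necessarily simple. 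Under the contradiction hypothesis the vanishing group $\HF_*\big(H^{\nat p};p[x]\big)=0$ is thus assembled, via the action filtration, from the local Floer homologies $\HF_*(z_i^p)$; by Theorem~\ref{thm:persist-lf} each $\HF_*(z_i^p)$ has dimension bounded independently of $p$ and, by~\eqref{eq:supp}, is supported in the window $[p\Delta(z_i)-n,\,p\Delta(z_i)+n]$, while --- $x$ being non-degenerate and $p$ admissible --- $\HF_*(x^p)$ is a single copy of $\Z_2$ in degree $\MUCZ(x^p)\in(p\Delta(x)-n,\,p\Delta(x)+n)$.

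Next I would extract a contradiction from the acyclicity of this uniformly bounded complex. For $p$ large, two of the windows above overlap only when the corresponding mean indices coincide, so the complex splits, as a graded object, into a direct sum over the distinct values of $\Delta(z_i)$; each summand, being a direct summand of an acyclic complex, is itself acyclic. Look at the summand $\CF^{(\Delta(x))}_*(p)$, which contains $x^p$. If $\Delta(x)$ is attained only by $x$ this summand is $\Z_2\neq 0$; if every $z_i$ with $\Delta(z_i)=\Delta(x)$ also has $\CA_H(z_i)=\CA_H(x)$ it is a direct sum of the non-zero groups $\HF_*(z_i^p)$; either way acyclicity fails. The remaining case is that $x^p$ is cancelled, across action levels, by some $z_j^p$ with $\Delta(z_j)=\Delta(x)$, $\CA_H(z_j)\neq\CA_H(x)$ and $\MUCZ(z_j^p)=\MUCZ(x^p)\pm1$ --- and it is here that the successor prime $p'$ enters. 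Comparing the acyclicity constraints in class $p[x]$ at period $p$ with those at period $p'$, and running a pigeonhole argument over the finite family $\{z_j\}$ and the finitely many admissible index/action cancellation patterns, one shows that known iterates cannot supply the required cancelling data at \emph{both} primes; hence at $p$ or at $p'$ the Floer complex in class $p[x]$ must carry an extra generator, i.e.\ a simple periodic orbit of period $p$ or $p'$ in class $p[x]$, contradicting the assumption. Since such an orbit then exists for every large prime, in pairwise distinct homology classes, one gets the usual growth bound $\const\cdot k/\ln k$.

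The main obstacle is exactly this last piece of bookkeeping. The clean mean-index window argument goes through verbatim only when $\Delta(x)$ is isolated among the mean indices of the one-periodic orbits in class $[x]$ --- in particular when $x$ is the only such orbit --- and it is precisely to absorb the coincidences $\Delta(z_j)=\Delta(x)$ with $\CA_H(z_j)\neq\CA_H(x)$ that one is forced to replace ``period $p$'' by ``period $p$ or $p'$''; making the two-prime comparison rigorous and locating the new orbit in the prescribed homology class is the technical heart of the proof. The remaining ingredients --- filtered continuation and compactness for Floer homology in a non-contractible class over an atoroidal $M$, and the SDM reduction with control of the free homotopy class --- are expected to be routine adaptations of the contractible theory, but they require care, in particular keeping the relevant action windows narrow enough that the spatial localization of Theorem~\ref{thm:sdm} applies.
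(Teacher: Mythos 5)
The paper itself does not present a proof of this theorem; it is cited from \cite{Gu:nc}, with only the hint that ``the change in filtered Floer homology, for a carefully chosen action range (and/or degree), between different iterations of $\varphi$ requires new simple periodic orbits to be created.'' Your outline is compatible with that hint at a coarse level --- vanishing of non-contractible Floer homology, reduction to the finite list $z_1,\dots,z_m$, the mean-index window argument, and the persistence result of Theorem~\ref{thm:persist-lf} are all the right ingredients --- and your treatment of the first two subcases (mean index $\Delta(x)$ isolated, and all $z_i$ with $\Delta(z_i)=\Delta(x)$ sharing the action of $x$) is correct. But there are two real problems.

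First, the SDM bifurcation you set up at the start is not justified as written. You invoke ``a non-contractible version of Theorem~\ref{thm:sdm}'', but the proof of Theorem~\ref{thm:sdm} starts by composing with a loop to reduce to a \emph{constant} orbit with trivial capping, and that reduction is simply unavailable for a non-contractible SDM orbit. The only SDM result in the survey that handles a non-constant orbit, Theorem~\ref{thm:contact-sdm}, works in the contact setting and needs the SDM orbit to be \emph{simple}; neither hypothesis is automatic here. More tellingly, the paper remarks that in Theorem~\ref{thm:nc} the non-degeneracy of $x$ can itself be relaxed to ``$x$ is isolated and homologically non-trivial'' --- so $x$ is permitted to be an SDM --- which strongly suggests the actual argument in \cite{Gu:nc} is uniform and does not branch on whether any $z_i$ is SDM. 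Your case split adds a non-trivial lemma you haven't proved, and it is probably not needed.

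Second, and more seriously, the ``two-prime pigeonhole'' that you correctly identify as the technical heart is not an argument but a placeholder, and its stated form appears logically ill-posed. You propose to compare ``the acyclicity constraints in class $p[x]$ at period $p$ with those at period $p'$.'' But the $p'$-periodic orbits of $\varphi$ in class $p[x]$ contain \emph{no} iterates of the $z_i$ (those sit in class $p'[x]$, and $p'[\gamma_0]=p[x]$ has no solution for $p,p'$ large coprime primes since $[x]$ has infinite order and is divisible only by finitely many integers), so under the contradiction hypothesis the Floer complex of $\varphi^{p'}$ in class $p[x]$ is simply \emph{empty}. An empty acyclic complex imposes no constraint, so the comparison you describe extracts no information at $p'$. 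To make a two-prime comparison bite, you would have to compare, say, $\HF^{I_p}_*(\varphi^p;p[x])$ with $\HF^{I_{p'}}_*(\varphi^{p'};p'[x])$ --- two different homology classes --- and then explain how a discrepancy there produces a simple orbit in the single class $p[x]$ of period $p$ \emph{or} $p'$, which is what the theorem asserts; this last transfer of information between classes is precisely what is missing. As it stands, in the crucial case where some $z_j$ has $\Delta(z_j)=\Delta(x)$ but $\CA_H(z_j)\neq\CA_H(x)$, your outline does not rule out the cancellation pattern and therefore does not complete the proof.
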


Thus the number of simple non-contractible periodic orbits with period
less than or equal to $k$, or the number of distinct homology classes
represented by such orbits, is bounded from below by $\const \cdot k/
\ln k$.  An immediate consequence of the theorem is that $\varphi$ has
infinitely many simple periodic orbits with homology classes in
$\N[x]$ whether or not the set of one-periodic orbits (in the class
$[x]$) is finite. Moreover, in this theorem, as in Theorem
\ref{thm:hq}, the non-degeneracy condition, can be relaxed and
replaced by a much weaker requirement that $x$ is isolated and
homologically non-trivial.  Finally, in both of the theorems, the
orbit $x$ need not be one-periodic; the theorems (with obvious
modifications) still hold when $x$ is just a periodic orbit.

Among the manifolds meeting the requirements of Theorem \ref{thm:nc}
are, for instance, closed K\"ahler manifolds with negative sectional
curvature and, more generally, any closed symplectic manifold with
$[\omega]|_{\pi_2(M)}=0$ and hyperbolic $\pi_1(M)$. Furthermore,
Hamiltonian diffeomorphisms having a periodic orbit in a non-trivial
homology class exist in abundance. It is plausible that a
$C^\infty$-generic Hamiltonian diffeomorphism has an orbit in a
non-trivial homology class when the fundamental group (or the first
homology group) of $M$ is large enough. However, already for $M=\T^2$,
a fixed Hamiltonian diffeomorphism need not have non-contractible
periodic orbits, and even $C^\infty$-generically one cannot prescribe
the homology class of an orbit in advance.  

Hypothetically, one can expect an analog of the theorem to hold when
the condition that $\omega$ is atoroidal is omitted or relaxed, e.g.,
replaced by the requirement that $(M,\omega)$ is toroidally monotone.
  
The proofs of all these theorems are based on the same idea that an
unnecessary periodic orbit is a seed creating infinitely many periodic
orbits. In Theorems \ref{thm:hq} and~\ref{thm:nc} the argument is
that, roughly speaking, the change in filtered Floer homology, for a
carefully chosen action range (and/or degree), between different
iterations of $\varphi$ requires new simple periodic orbits to be
created. The proof of Theorem \ref{thm:hyperbolic} relies on a result,
perhaps of independent interest, asserting that the energy needed for
a Floer connecting trajectory of an iterated Hamiltonian to approach a
hyperbolic orbit and cross its fixed neighborhood cannot become
arbitrarily small: it is bounded away from zero by a constant
independent of the order of iteration. Then the product structure in
quantum homology is used to show that there must be Floer connecting
trajectories with energy converging to zero for some sequence of
iterations unless new periodic orbits are created.

\subsection{Reeb flows, symplectomorphisms and all that}
\label{sec:symplecto}
The conjectures discussed in Section \ref{sec:franks-gen} have obvious
analogs for Reeb flows and symplectomorphisms.

\subsubsection{Reeb flows revisited}
Just like Hamiltonian diffeomorphisms, Reeb flows with ``unnecessary''
periodic orbits can be expected to have infinitely many simple
periodic orbits. However, as of this writing, there is little evidence
supporting this conjecture, and all the relevant results are
three-dimensional.  The most notable one is a theorem, proved in
\cite{HWZ:convex}, asserting that the Reeb flow on a strictly convex
hypersurface in $\R^4$ has either two or infinitely many periodic
orbits. In fact, more generally, this is true for the so-called
dynamically convex contact forms on $S^3$.  Conjecturally, this
``two-or-infinitely-many'' alternative should hold for all contact
forms supporting the standard contact structure on $S^3$, which could
be thought of as a three-dimensional analog of Franks' theorem; see
\cite{HWZ03} for some other related results.

The existence of infinitely many closed geodesics on $S^2$ also fits
perfectly into the framework of this conjecture; see \cite{Ba, Fr1}
and also \cite{Hi:geod, Hi:length} and the references therein for the
original argument. Indeed, the classical Lusternik--Schnirelmann
theorem asserts the existence of at least three closed geodesics on
$S^2$, i.e., at least one more than is necessary from the
Floer--theoretic perspective, cf.\ \cite{Ka, Zi}. The geodesic flow on
$S^2$, interpreted as a Reeb flow on the standard contact $\RP^3$,
should then have infinitely many simple closed orbits, i.e., simple
closed geodesics on $S^2$. In fact, one can prove the existence of
infinitely many closed geodesics on $S^2$ in exactly this way using
the variant of the Lusternik--Schnirelmann theorem from \cite{Gr} as
the starting point and then the results from \cite{HMS} and
\cite{GHHM} on the symplectic side of the problem; see the latter
reference for more details.

Finally, another aspect of this question is related to the so-called
perfect Reeb flows. Let us call a non-degenerate Reeb flow on a
contact manifold \emph{perfect} if the differential in the contact
homology complex vanishes for some choice of the auxiliary data, cf.\
\cite{BCE}. (Thus this definition depends on the type of the contact
homology used.) For instance, a Reeb flow is perfect (for every
auxiliary data) when all closed orbits have Conley--Zehnder index of
the same parity; we refer the reader to \cite{Gu:pr} for numerous
examples of perfect Reeb flows. One can think of non-perfect Reeb
flows as those with unnecessary periodic orbits. In \cite{Gu:pr} an
upper bound on the number of simple periodic orbits of perfect Reeb
flows is established for many contact manifolds under some (minor)
additional assumptions.  For $S^{2n-1}$, as expected, the upper bound
is $n$.  However, in general, it is not even known if a perfect Reeb
flow on the standard contact $S^{2n-1}$, $2n-1 \geq 5$, must have
finitely many simple periodic orbits or, if it does, whether this
number is independent of the flow. (For $S^3$, this is proved in
\cite{BCE} and reproved in \cite{Gu:pr}.)

\subsubsection{Symplectomorphisms} For symplectomorphisms, the problem
of the existence of infinitely many periodic orbits breaks down into
several phenomena in the same way as for Reeb flows, although even
less is known. Namely, as in Section \ref{sec:reeb-general}, we can,
roughly speaking, single out three types of behavior of
symplectomorphisms.  First of all, some manifolds (such as $\CP^n$ or
tori or their products) admit symplectomorphisms with finitely many
periodic orbits or even, in some instances (e.g., $\T^{2n}$), without
periodic orbits.

Then there are symplectomorphisms $\varphi$ such that the rank of the
Floer homology $\HF_*(\varphi^k)$ over a suitable Novikov ring
$\Lambda$ grows with the order of iteration $k$.  The Floer homology
groups of symplectomorphisms have been studied for close to two
decades starting with \cite{DS, LO}, and the literature on the subject
is quite extensive (particularly so for symplectomorphisms of
surfaces); we refer the reader to, e.g., \cite{CC, Fe} and references
therein for recent results focusing specifically on the growth of the
Floer homology.  Let us assume here, for the sake of simplicity, that
$M$ is symplectically aspherical or monotone and that the Floer
homology $\HF_*(\varphi^k)$ is defined.  Similarly to the results in
\cite{GM, HM, McL}, we have

\begin{Proposition}
\label{prop:sympl}
Let $\varphi\colon M\to M$ be a symplectomorphism of a closed
symplectic manifold $M$ such that the sequence $\rk_\Lambda
\HF_*(\varphi^k)$ is unbounded. Then $\varphi$ has infinitely many
simple periodic orbits. Moreover, every sufficiently large prime 
occurs as a simple period when the number of fixed points of $\varphi$
is finite and $\rk_\Lambda\HF_*(\varphi^k)\to \infty$.
\end{Proposition}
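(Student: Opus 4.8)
The plan is to use the identification of $k$-periodic orbits of $\varphi$ with the fixed points of $\varphi^k$, together with the symplectomorphism counterpart of Theorem~\ref{thm:persist-lf} (and of the results in \cite{GG:gap, GM, HM, McL}) that a single periodic orbit contributes only boundedly to Floer homology under iteration. Precisely, for an isolated periodic orbit $x$ of $\varphi$ the iterate $x^k$ is again isolated for every admissible $k$ (see \cite{CMPY}), and $k\mapsto \HF_*(x^k)$ is periodic up to a shift of grading on the admissible iterations, so $\dim\HF_*(x^k)\le B(x)$ for a constant $B(x)$ independent of $k$. The local Floer homology here is an invariant of the germ of the symplectic map $\varphi^k$ at the fixed point $x$, so the argument proving Theorem~\ref{thm:persist-lf} should carry over essentially word for word.

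Granting this, I would prove the first assertion by contradiction. Suppose $\varphi$ has only finitely many simple periodic orbits, with minimal periods $d_1,\dots,d_m$. Then for each $k$ the $k$-periodic orbits of $\varphi$ are exactly the iterates $x_j^{k/d_j}$ with $d_j\mid k$; in particular there are finitely many of them and each is isolated. A $C^\infty$-small perturbation of $\varphi^k$ supported near these orbits produces a nondegenerate symplectomorphism whose Floer complex is free over the Novikov ring $\Lambda$ of rank at most $\sum_{d_j\mid k}\dim\HF_*(x_j^{k/d_j})\le\sum_{j=1}^m B(x_j)=:B$. Since $\rk_\Lambda\HF_*(\varphi^k)$ is bounded by the rank of this complex, it is bounded independently of $k$, contradicting the hypothesis. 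For the ``moreover'' part I would assume, again for contradiction, that there is an infinite set of primes $k$ that do not occur as periods of simple periodic orbits of $\varphi$. For a prime $k$ every $k$-periodic orbit has minimal period $1$ or $k$, hence is either a fixed point or simple, so for such $k$ all $k$-periodic orbits are the $k$-th iterates of the finitely many fixed points of $\varphi$; for $k$ large these iterates are admissible and therefore isolated, and the same count gives $\rk_\Lambda\HF_*(\varphi^k)\le\sum_{x\in\Fix(\varphi)}B(x)$, contradicting $\rk_\Lambda\HF_*(\varphi^k)\to\infty$. Hence all sufficiently large primes occur as simple periods.

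The main obstacle --- indeed the only step that is not formal bookkeeping --- is the uniform bound $\dim\HF_*(x^k)\le B(x)$, that is, the symplectomorphism analog of Theorem~\ref{thm:persist-lf}. One has to check that the local Floer homology of an iterated symplectic germ is well defined (with a $\Z$- or at least relative grading), that $x^k$ remains isolated for admissible $k$, and that the Gromov--Floer isolation argument making the local differential square to zero survives iteration; all of this is available locally in \cite{GG:gap} in the Hamiltonian case, so the adaptation is routine but needs care. Two secondary technical points are the definition of $\HF_*(\varphi^k)$ itself, which requires $M$ to be monotone or symplectically aspherical (or virtual perturbation machinery) together with a coherent choice of Novikov rings for the iterates, and the requirement that the perturbation of $\varphi^k$ be chosen so that all its fixed points lie in prescribed small neighborhoods of the original orbits, so that the chain-level rank count is not spoiled --- the same localization used in the proof of Theorem~\ref{thm:persist-lf}.
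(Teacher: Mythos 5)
Your argument is correct and follows the same route the paper takes: the strongly non-degenerate case is a standard counting argument, and the degenerate case reduces to the uniform bound on $\dim\HF_*(x^k)$ furnished by Theorem~\ref{thm:persist-lf} (or rather its local, symplectomorphism-germ version), exactly as the paper's one-line proof indicates. Your proposal simply spells out the bookkeeping that the paper leaves implicit, and you correctly flag the only substantive point --- that Theorem~\ref{thm:persist-lf} is a statement about the local germ at a fixed point and hence applies verbatim to symplectomorphisms --- which the paper implicitly uses without comment.
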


\begin{proof}
  The proposition is obvious and well known when $\varphi$ is strongly
  non-degenerate. (See \cite{CC, Fe} for more specific and stronger
  results.) The degenerate case follows from the fact that the
  dimension of the local Floer homology of an isolated periodic orbit
  remains bounded as a function of the order of iteration, as a
  consequence of Theorem~\ref{thm:persist-lf}.
\end{proof}

\begin{Example} Let $\Sigma$ be a closed surface and $\psi\colon
  \Sigma\to\Sigma$ be a symplectomorphism such that $\rk_\Lambda
  \HF_*(\psi^k)\to\infty$. This is the case, for instance, when the
  Lefschetz number $L(\psi^k)$ grows; such symplectomorphisms exist in
  abundance. (Proposition \ref{prop:sympl} applies to $\psi$, but in
  this case a simpler argument is available: when $L(\psi^k)\to\infty$
  the assertion immediately follows from the Shub--Sullivan theorem,
  \cite{SS}.) Let $P$ be a symplectically aspherical manifold with
  $\chi(P)=0$, such as $P=\T^{2n}$, and $\varphi\colon P\times
  \Sigma\to P\times \Sigma$ be Hamiltonian isotopic to
  $(\id,\psi)$. Then $\rk_\Lambda \HF_*(\varphi^k)\to\infty$ and, by
  the proposition, $\varphi$ has infinitely many simple periodic
  orbits. However, $L(\varphi^k)=0$, and, moreover, a
  symplectomorphism $\varphi$ in the smooth or symplectic isotopy
  class of $(\id,\psi)$ need not have periodic orbits at all when,
  e.g., $P=\T^{2n}$.
\end{Example}

Thirdly, there are symplectomorphisms with infinitely many simple
periodic orbits, but no homological growth. Here, of course, we have
the Hamiltonian Conley conjecture as a source of examples.  The
authors tend to think that there should be numerous other classes of
symplectomorphisms of this type, but no results to this account have
so far been proved. One class of symplectomorphisms for which the
symplectic Conley conjecture is likely to hold and probably within
reach is that of symplectomorphisms of a surface of genus $g\geq 2$
symplectically isotopic to~$\id$.

Finally, one can expect the presence of an unnecessary fixed or
periodic point to force a symplectomorphism to have infinitely many
simple periodic orbits. However, now the situation is more subtle,
less is known, and there is a counterexample to this general
principle. A prototypical (and simple) result of this type is that a
non-degenerate symplectomorphism of $\T^2$ symplectically isotopic to
$\id$ has infinitely many simple periodic orbits, provided that it has
one fixed or periodic point; see \cite[Thm.\ 1.7]{GG:generic}. In
other words, we have the following ``zero or infinitely many''
alternative: a non-degenerate symplectomorphism of $\T^2$ isotopic to
$\id$ has either no periodic orbits or infinitely many periodic
orbits. It is interesting, however, that the non-degeneracy condition
cannot entirely be omitted, although it can probably be
relaxed. Namely, it is easy to construct a symplectic vector field on
$\T^2$ with exactly one (homologically trivial) zero and no periodic
points; see \cite[Example 1.10]{GG:generic}. (No similar results or
counterexamples for tori $\T^{2n}$, $2n\geq 4$, are known.)  There is
an analog of Theorem \ref{thm:hyperbolic} for symplectomorphisms,
\cite{Bat}, applicable to manifolds such as $\CP^n\times P^{2m}$,
where $P$ is symplectically aspherical and $m\leq n$.

Note in conclusion that when discussing symplectomorphisms in the
homological framework, it would make sense to ask the question of the
existence of infinitely many periodic orbits while fixing the class of
symplectomorphisms Hamiltonian isotopic to each other. The reason is
that the Floer homology is very sensitive to symplectic isotopy but is
invariant under Hamiltonian isotopy.  Above, however, we have not
strictly adhered to this point of view and mainly focused on the
properties of the ambient manifolds.

\begin{Remark}
  In this survey, we have just briefly touched upon the question of
  the existence of infinitely many periodic orbits for Hamiltonian
  diffeomorphisms and symplectomorphisms of open manifolds and
  manifolds with boundary. (In this case, one, of course, has to
  impose some restrictions on the behavior of the map near the
  infinity or on the boundary.) Such symplectomorphisms naturally
  arise in applications and in physics. For instance, the billiard
  maps and the time-one maps describing the motion of a particle in a
  time-dependent conservative force field and/or exact magnetic field
  are in this class. We are not aware of any new phenomena happening
  in this setting, and our general discussion readily translates to
  such maps. For instance, Hamiltonian diffeomorphisms of open
  manifolds can exhibit the same three types of behavior as
  symplectomorphisms of closed manifolds or Reeb flows. (After all, a
  geodesic flow is a Hamiltonian flow on the cotangent bundle.)  To
  the best of our knowledge, there are relatively few results of
  symplectic topological nature concerning this class of maps; see
  Section \ref{sec:CC-history} for some of the relevant references.
  \end{Remark}

\end{document}